\newcommand{\sub}{\mathrm{Sub}}
\newcommand{\eps}{\varepsilon}
\newcommand{\wdt}[1]{\widetilde #1}
\newcommand{\ovl}[1]{\overline #1}
\newcommand{\pl}{\partial}
\newcommand{\bs}{\backslash}
\newcommand{\vol}{\operatorname{vol}}
\newcommand{\Id}{\operatorname{Id}}
\newcommand{\tr}{\operatorname{tr}}
\newcommand{\Ima}{\operatorname{Im}}
\newcommand{\inj}{\operatorname{inj}}
\newcommand{\sys}{\operatorname{sys}}
\newcommand{\Log}{\mathrm{Log}\,}
\newcommand{\fix}{\mathrm{Fix}}
\newcommand{\Min}{\mathrm{Min}}
\newcommand{\ram}{\mathrm{Ram}}
\newcommand{\G}{\mathbf{G}}
\newcommand{\PSL}{\mathrm{PSL}}
\newcommand{\SL}{\mathrm{SL}}
\newcommand{\SU}{\mathrm{SU}}
\newcommand{\SO}{\mathrm{SO}}
\newcommand{\GL}{\mathrm{GL}}
\newcommand{\B}{\mathbf{B}}
\newcommand{\N}{\mathbf{N}}
\newcommand{\T}{\mathbf{T}}
\newcommand{\M}{\mathbf{M}}
\newcommand{\so}{\mathcal{O}}
\newcommand{\sd}{\mathcal{D}}
\newcommand{\mB}{\mathcal{B}}
\newcommand{\sF}{\mathcal{F}}
\newcommand{\frg}{\mathfrak{g}}
\newcommand{\fra}{\mathfrak{A}}
\newcommand{\frb}{\mathfrak{B}}
\newcommand{\frc}{\mathfrak{C}}
\newcommand{\fri}{\mathfrak{I}}
\newcommand{\frp}{\mathfrak{P}}
\newcommand{\stab}{\mathrm{Stab}}
\newcommand{\NN}{\mathbb N}
\newcommand{\CC}{\mathbb C}
\newcommand{\RR}{\mathbb R}
\newcommand{\ZZ}{\mathbb Z}
\newcommand{\HH}{\mathbb H}
\newcommand{\PP}{\mathbb P}
\newcommand{\QQ}{\mathbb Q}
\newcommand{\Ade}{\mathbb A}
\title[Convergence des orbi-vari\'et\'es]{Sur la convergence des orbi-vari\'et\'es arithm\'etiques}
\author{\firstname{Jean} \lastname{Raimbault}}
\address{Institut de Math\'ematiques de Toulouse ; UMR5219 \\ Universit\'e de Toulouse ; CNRS \\ UPS IMT, F-31062 Toulouse Cedex 9, France}
\email{Jean.Raimbault@math.univ-toulouse.fr}
\thanks{During the writing of this paper I was supported by a research grant from the Max-Planck Gesellschaft. \\
I am grateful to Miklos Ab\'ert, Misha Belolipetsky, Nicolas Bergeron, Ian Biringer, Aurel Page, Alexander Rahm, Juan Souto and Akshay Venkatesh for valuable input to or remarks on the contents of this paper. The referees also pointed out numerous imprecisions, mistakes and typos in previous versions of this paper. 
}
\keywords{Vari\'et\'es hyperboliques arithm\'etiques, Multiplicit\'es limites, Vari\'et\'es tridimensionnelles}
\subjclass{22E40, 11F75, 11F72, 57M27}
\begin{document}

\begin{abstract}
  Cet article est consacr\'e \`a l'\'etude de la g\'eom\'etrie globale de certaines orbi-vari\'et\'es localement isom\'etriques \`a un produit d'espaces tridimensionnels et de plans hyperboliques. On d\'emontre que pour les peties dimensions (pour l'espace ou le plan hyperbolique, ou un produit de plans hyperboliques) certaines suites de telles orbi-vari\'et\'es non-compactes de volume fini convergent vers l'espace sym\'etrique en un sens g\'eom\'etrique pr\'ecis (``convergence de Benjamini--Schramm''). On traite aussi le cas des r\'eseaux arithm\'etiques maximaux en dimension trois dont les corps de traces sont quadratiques ou cubiques. Une des principales motivations est d'\'etudier l'asymptotique des nombres de Betti des groupes de Bianchi. 
\end{abstract}

\begin{altabstract}
  We discuss the geometry of some arithmetic orbifolds locally isometric to a product $X$ of real hyperbolic spaces $\mathbb H^m$ of dimension $m=2,3$, and  prove that certain sequences of non-compact orbifolds are convergent to $X$ in a geometric (``Benjamini--Schramm'') sense for low-dimensional cases (when $X$ is equal to $\mathbb H^2\times\mathbb H^2$ or $\mathbb H^3$). We also deal with sequences of maximal arithmetic three--dimensional hyperbolic lattices defined over a quadratic or cubic field. A motivating application is the study of Betti numbers of Bianchi groups. 
\end{altabstract}

\newtheorem{theostar}{Theorem}
\renewcommand*{\thetheostar}{\Alph{theostar}}
\newtheorem{conjstar}[theostar]{Conjecture}

\newtheorem*{corstar}{Corollary}
\newtheorem*{queststar}{Question}

\renewcommand{\thefootnote}{\alph{footnote}}

\maketitle

\section{Introduction}

Let $X$ be a Riemannian symmetric space without Euclidean or compact factors: this means that $X$ is obtained as a quotient $G_\infty/K_\infty$ of a semisimple, noncompact real Lie group $G_\infty$ by a maximal compact subgroup $K_\infty$. The space $X$ is a contractible manifold (homeomorphic to an Euclidean space) and can be endowed with the unique (up to homothety in each irreducible factor) Riemannian metric invariant under the action of $G_\infty$ by left translations. For any discrete subgroup $\Gamma$ in $G_\infty$ the quotient $M=\Gamma\bs X$ then has the structure of a Riemannian orbifold (i.e. there is a subset of codimension $\ge 2$ where the smooth structure and the metric can have singularities), and in particular various metric invariants can be associated to $\Gamma$:
\begin{itemize}
\item the Riemannian volume $\vol(M) \in ]0,+\infty]$; 
\item for each $x\in M$ the injectivity radius $\inj_x(M)$ is defined to be the largest $R$ such that the ball of radius $R$ around $x$ in $M$ is isometric to a ball in $X$; equivalently, choosing a lift $\tilde x$ of $x$ to $X$ one has
$$
\inj_x(M) = \frac 1 2 \inf_{\gamma\in\Gamma-\{1\}} d(\tilde x,\gamma \tilde x)
$$
and the global injectivity radius $\inj(M)$ is then defined as $\inf_{x\in M}\inj_x(M)$;
\item the maximal radius of $M$ is defined by $\max\inj (M) = \sup_{x\in M}\inj_x(M)$. 
\end{itemize}
All these invariants have been well--studied in the case of real hyperbolic manifolds, usually with the purpose of establishing universal constraints (or the lack thereof) for a given dimension: we refer to the introduction of \cite{Gendulphe} for a recent survey on this type of result. In this paper we are interested in the relations of the volume to the other invariants in some specific sequences of arithmetic locally symmetric orbifolds (of finite volume); this is related to the Benjamini--Schramm convergence which was studied (in \cite{7S}) by M. Ab\'ert, N. Bergeron, I. Biringer, T. Gelander, N. Nikolov, I. Samet and the author for locally symmetric orbifolds, which in turn has consequences on the Betti numbers (among other things) of these spaces.


\subsection{Geometric convergence of locally symmetric orbifolds}

\subsubsection{Benjamini--Schramm convergence to the universal cover}
\label{BS_intro}

The $R$-thin part of a Riemannian orbifold $M$ is by definition
$$
(M)_{\le R} = \{x\in M:\: \inj_x(M)\le R\}. 
$$
Its volume is a metric invariant of $M$. Fix $X$ a contractible complete Riemannian manifold; in \cite{7S} the notion of Benjamini--Schramm (BS) convergence of a sequence of finite--volume orbifold quotients $M_n$ of $X$ to $X$ was introduced by the following definition: $(M_n)$ is BS-convergent to $X$ if and only if for all $R>0$ we have
\begin{equation}
\frac{\vol (M_n)_{\le R}}{\vol M_n} \xrightarrow[n\to\infty]{} 0. 
\label{BS}
\end{equation}
In informal terms this means that `the injectivity radius of $M_n$ goes to infinity at almost every point'. One should see this notion of convergence as a middle ground between the so-called pointed Gromov--Hausdorff convergence to $X$, which asks that the maximal radius of the $M_n$ go to infinity, and the stronger statement that the global injectivity radius goes to infinity. 

Pointed Gromov--Hausdorff convergence is defined in a more general setting, and in fact any sequence of pointed Riemannian manifolds has accumulation points in this sense (the limit of course does not have to be a manifold itself), we refer to \cite[Chapter 10]{Petersen} for more information. Benjamini--Schramm convergence is also defined for more general sequences than only those satisfying \eqref{BS}, and the limits arising in this context are invariant random subgroups of the group of isometries of $X$ (see \ref{primer} below for a short introduction to this). An analogous notion was first considered in the context of regular graphs by I. Benjamini and O. Schramm in \cite{BS}, elaborated upon by M. Ab\'ert, Y. Glasner and B. Vir\'ag in \cite{AGV} and is the main tool used in \cite{7S}. In this Benjamini--Schramm topology, every sequence of lattices in $G_\infty$ is relatively compact (see \ref{primer} below). Thus, to prove \eqref{BS} for a given sequence one can argue by compactness as follows: any subsequence of $\mu_{\Gamma_n}$ has a limit, and if one can show in some way that any such limit must be equal to $X$ then \eqref{BS} must hold for the whole sequence. This line of argument was used in \cite{7S} to prove that if $G_\infty$ is simple and of real rank larger than 2 then any sequence of $X$--orbifolds must satisfy \eqref{BS}.


\subsubsection{Questions on the convergence of arithmetic orbifolds}

For this section we fix the Lie group $G_\infty$. Recall that a lattice in $G_\infty$ is a discrete subgroup such that the quotient $\Gamma \bs G_\infty$ carries a finite, $G_\infty$-invariant Borel measure. An important class of lattices in $G_\infty$ are the so-called arithmetic subgroups, which are `integral points' of $\QQ$-groups whose real points are isomorphic to $G_\infty$ up to a compact factor (see \cite[Section 10.3]{MR} for a short introduction to arithmetic groups). This work is mainly concerned with them and their convergence properties, and especially with the following question:

\begin{queststar}
Let $\Gamma_n$ be a sequence\footnote{In this paper such sequences will always satisfy the following nontriviality assumption: for distinct $n,n'$ the subgroups $\Gamma_n,\Gamma_n'$ are not conjugated in $G_\infty$.} of maximal arithmetic irreducible lattices in $G_\infty$; are the orbifolds $\Gamma_n\bs X$ Benjamini--Schramm convergent to $X$? 
\label{maxconv}
\end{queststar}

Recall that a lattice in $G_\infty$ is said to be maximal if it is not contained in a strictly larger discrete subgroup of $G_\infty$, and in the case where $G_\infty$ has more than one simple factor, a lattice it is said to be irreducible if its projection on every simple factor is dense. In the case where all simple factors of $G_\infty$ are of real rank 2 or higher an affirmative answer to the above question is provided by the much stronger result that {\it all} sequences of finite-volume irreducible $X$-orbifolds are BS-convergent to $X$ \cite[Theorem 1.5]{7S}. The irreducibility condition is needed in all cases, as is easily seen by considering the example of a sequence of maximal lattices which all contain a fixed lattice in one factor. In real rank one it is likely that the condition that the lattices be maximal (or congruence--see below) is needed; for all lattices in real hyperbolic spaces, and for some in complex hyperbolic ones there are sequences of finite covers which are not BS-convergent to the symmetric space (this is related to the failure of the congruence subgroup property in real rank one (see for example the survey in \cite[Chapter 7]{Lubotzky_Segal}), as can be seen from \cite[Theorem 1.12]{7S}). 

The arithmetic lattices in $G_\infty$ can be constructed as follows. We need a number field $F$ whose embeddings into $\RR$ we denote by $\sigma_1,\ldots,\sigma_{r_1}$, and $\sigma_{r_1+1},\ldots,\sigma_{r_2}$ are the remaining embeddings into $\CC$ up to complex conjugation (thus the degree $r=[F:\QQ]$ equals $r_1+2r_2$). We also need an algebraic group $\G$ over $F$, and we require that the group 
$$
G_\infty' := \G(F \otimes_\QQ \RR) = \prod_{j=1}^{r_1}\G^{\sigma_j}(\RR) \times \prod_{i=r_1+1}^{r_2} \G^{\sigma_i}(\CC)
$$
has a surjective map to $G_\infty$, with compact kernel. Suppose moreover that there is an $F$-embedding of $\G$ into some $\SL_m$, and let $\Gamma$ be the subgroup $\G(F)\cap\SL_m(\so_F)$ where $\so_F$ is the ring of integers of $F$. Then the image of $\Gamma$ is a lattice in $G_\infty$ by a theorem of A. Borel and Harish--Chandra. 

The construction above actually defines (without need to refer to an embedding into $\SL_m$) a commensurability class\footnote{Recall that two subgroups $\Gamma_1,\Gamma_2$ of $G_\infty$ are said to be commensurable if their intersection has finite index in both.} of lattices in $G_\infty$. This commensurability class contains infinitely many maximal lattices, and thus we see that there are two ways to generate sequences of maximal arithmetic lattices:
\begin{itemize}
\item By taking a sequence of maximal lattices inside a commensurability class;
\item By changing the group $\G$ and/or the field of definition $F$. 
\end{itemize}
We expect that in the first situation the sequence will always be BS-convergent to the universal cover. In the second situation we expect, with much less certainty however, that if the degree of the field is kept bounded the sequence will be convergent. One of the aims of this paper is to substantiate these expectations for low--dimensional examples with $\RR$-rank one factors, in particular hyperbolic three--manifolds.

An alternative to considering maximal lattices is to study sequences of so-called congruence lattices. It is more usual to consider congruence {\it subgroups} of a given arithmetic group: if $\Gamma$ stabilizes a lattice $L$ in a representation of $G_\infty$ on a real vector space, one defines the principal congruence subgroups of $\Gamma$ as the kernels $\Gamma(n)$ of the reduction maps $\Gamma\to\GL(L/nL)$ (where $n$ is a positive integer and $L/nL$ is considered as a $\ZZ/n\ZZ$-module), and a congruence subgroup of $\Gamma$ is any subgroup containing some $\Gamma(n)$ (see also \cite[Chapter 6]{Lubotzky_Segal}). In general, given a $F$-form $\G$ of $G_\infty$, the congruence lattices in $\G(F)$ are defined as the subgroups of $\G(F)$ which are equal to the intersection of $\G(F)$ with their closure in the group of points over finite ad\`eles $\G(\Ade_f)$ (if $\Gamma$ is a congruence lattice in $\G(F)$ then the congruence subgroups of $\Gamma$ as defined above are also congruence lattices in $\G(F)$). Some of these are closely related to maximal arithmetic lattices (see \cite{Rohlfs}), and we expect also that in a sequence of commensurability classes the congruence subgroups be BS-convergent to the universal cover when the degree of the field $F$ in the construction above is bounded. The case of congruence subgroups of a fixed arithmetic lattice was dealt with in \cite{7S} (actually only in the case of compact orbifolds--but the general case can be deduced from Theorem 1.11 in loc. cit. with little effort, see Proposition \ref{criter} below).


\subsubsection{Non-arithmetic lattices}
We should say a word on the arithmeticity assumption in the question above. The only semisimple Lie groups $G_\infty$ known to contain infinitely many commensurability classes of irreducible non-arithmetic lattices\footnote{The only other groups with known non-arithmetic lattices are $\mathrm{SU}(2,1)$ and $\mathrm{SU}(3,1)$, and these fall into finitely many commensurability classes (see for example \cite{DPP}).} are the groups $G_\infty=\SO(m,1)$ for $m \ge 2$ (and groups isogenic to those). The symmetric space associated to $\SO(m,1)$ is real hyperbolic space $\HH^m$ ; for each $m$ it can be seen that there are sequences of nonarithmetic minimal hyperbolic $m$-orbifolds which are far from being BS-convergent to the universal cover. For $m=2,3$ there are in fact sequences of nonconjugated maximal lattices in $\SO(m,1)$ with bounded covolume (obtained by gluing pants for $m=2$, and by Thurston's hyperbolic Dehn surgery in dimension 3). For general $m\ge 4$ this latter phenomenon is impossible because of H. C. Wang's finiteness theorem, but the variation on Gromov and Piatetski-Shapiro's construction of nonarithmetic lattices \cite{GPS} given in \cite{volume} implies (for every $m\ge 3$) that there is a $C>0$ and a sequence of pairwise noncommensurable hyperbolic $m$-manifolds $M_n$ such that for all $n$ and $x\in M_n$ we have $\inj_x(M_n)\le C$, so that if we take the manifold with minimal volume in the commensurability class of $M_n$ we obtain a sequence of maximal lattices with covolume going to infinity but injectivity radius $\le C$ at every point. The limit points in the BS-topology in this case are the IRS studied in section 13 of \cite{7S}.


\subsubsection{Unbounded maximal radius}

One can also ask whether the maximal injectivity radius is unbounded in a sequence of maximal (or congruence) arithmetic manifolds; this is obviously weaker than asking for BS-convergence of the sequence but still fails for sequences of nonarithmetic manifolds because of the same example. We ask the following questions. 

\begin{queststar}
Let $R>0$ and $m\ge 2$; is there only a finite number of maximal (or congruence) arithmetic hyperbolic $m$-orbifolds $M$ with $\max\inj M\le R$? 

More generally, without fixing the dimension, is there only a finite number of maximal (or congruence) arithmetic hyperbolic manifolds $M$ with $\max\inj M\le R$?
\end{queststar}

As a particular case of the second question one can ask whether for the sequence $M_n=\SO(n,1;\ZZ)\bs\HH^n$ we have $\max\inj(M_n)\xrightarrow[n\to+\infty]{} +\infty$ or not\footnote{This question was communicated to the author by M. Belolipetsky, who heard it from Jun-Muk Hwang who asked it out of algebro-geometric motivations.}. 


\subsection{Results on Benjamini--Schramm convergence}

\subsubsection{Arithmetic hyperbolic three--manifolds and Bianchi groups}

In the case where $G_\infty$ is a product of factors isomorphic to $\SL_2(\RR)$ or $\SL_2(\CC)$ and $\Gamma$ is an irreducible lattice in $G_\infty$ one can use the field $F$ generated by the set $\tr(\mathrm{ad}\Gamma)$ to define the commensurability class of $\Gamma$ as above (this field is independant of the choic of $\Gamma$ in the commensurability class ; it is called the {\em invariant trace field} of $\Gamma$, see \cite[3.3]{MR}). The main result in this paper is then the following. 

\begin{theostar}
Let $\Gamma_n$ be a sequence of maximal arithmetic or congruence lattices in $\SL_2(\CC)$ and $F_n$ the invariant trace field of $\Gamma_n$. Suppose that the $F_n$ are quadratic, or that they are cubic and either the $\Gamma_n$ are derived from a quaternion algebra or the size of the 2-torsion subgroup of $F_n$ is $\ll D_n^{0.24}$ (where $D_n$ is the discriminant of $F_n$). Let $M_n=\Gamma_n\bs\HH^3$. Then the oribifolds $M_n$ are Benjamini--Schramm convergent to $\HH^3$. In fact there is a $\delta>0$ such that for every $R>0$ there is a constant $C$ so that
$$
\vol (M_n)_{\le R} \le C(\vol M_n)^{1-\delta}
$$
holds for all $n$. 
\label{Main1}
\end{theostar}

Here are a few remarks about the hypothesis on the class group in the cubic case: it is conjectured that the $p$-torsion subgroup is of size $\ll D_F^\eps$ for all $p$ and $F$, with a bound depending on $p$ and the degree of $F$ (see for example \cite{EV}, which also provides some bounds in this direction). In the case of interest to us ($p=2$ and degree 3) the best currently known bound, due to Bhargava--Shankar--Taniguchi--Thorne--Tsimerman--Zhao \cite{BSTTTZ}, is $D_F^{0.2785}$, which misses what we need by about 0.04. On the other hand the mean size of the 2-torsion is bounded by a result of Bhargava \cite{Bhargava}, so the hypothesis that it is $\ll D_F^{0.24}$ is true for a subset of density one in the set of all cubic fields. 

The bound on the size of the thin part is reminescent of that obtained for congruence covers in \cite[Theorem 1.12]{7S}; however there is a big difference between the latter result and the one above, which is that in \cite{7S} the dependancy of $C$ on $R$ is made explicit (for compact orbifolds; the case of congruence covers of non-compact arithmetic three--manifolds is dealt with in \cite{torsion2}), which we do not do here (we use a non-explicit finiteness argument at some point). 

An important example which the result above covers is that of the sequence of the Bianchi groups. These were historically among the first arithmetic groups studied in relation with hyperbolic geometry, by L. Bianchi in his paper \cite{Bianchi}; they are parametrized by positive square--free integers $m$ as follows: for such $m$ let $D=m$ if $m=3\pmod 4$ or $4m$ otherwise and $F_D$ be the quadratic imaginary number field $\QQ(\sqrt{-m})$ (whose discriminant equals $-D$) and $\so_D$ its ring of integers. The Bianchi group associated to $m$ or $D$ is defined to be:
$$
\Gamma_D = \SL_2(\so_D). 
$$
Then $\Gamma_D$ is obviously a congruence lattice in $\SL_2(\CC)$ with invariant trace field $F_D$. As we will now explain the proof of Theorem \ref{Main1} rests on this special case (for which we actually can take $\delta=1/3-\eps$ for every $\eps>0$, see Theorem \ref{conv_Bianchi} below). The non-cocompact arithmetic lattices in $\SL_2(\CC)$ are all commensurable to one of the Bianchi groups \cite[Theorem 8.2.3]{MR}, and it is not very hard to deduce Theorem \ref{Main1} in the case of maximal nonuniform lattices from the special case of Bianchi groups; the congruence case then follows from this together with the statement that in a commensurability class a sequence of congruence lattices is BS-convergent to $\HH^3$. For the compact case one uses the `classical' proof of the Jacquet--Langlands correspondance given in \cite{BJ}, which relates the length spectra of compact congruence lattices in $\SL_2(\CC)$ defined over a quadratic imaginary field with that of congruence subgroups of the Bianchi groups. For the cubic case, instead of considering Bianchi orbifolds one needs to consider the irreducible noncompact quotients of $\HH^3\times\HH^2$. \\

Using the compactness argument outlined in \ref{BS_intro} we also prove the following result. 

\begin{theostar}
Let $M_n$ be a sequence of arithmetic hyperbolic three--orbifolds with fields of definition $F_n$ such that
\begin{itemize}
\item for each $n$, $F_n$ is a quadratic extension of a totally real subfield $B_n$;
\item the relative discriminants $D_{F_n/B_n}$ go to infinity; 
\item the absolute degree $[F_n:\QQ]$ is bounded.
\end{itemize}
Then $M_n$ is BS-convergent to $\HH^3$. 
\label{Main2}
\end{theostar}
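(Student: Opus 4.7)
The plan is to apply the compactness argument sketched just before the statement: pass to a subsequential weak limit $\mu$ of the IRSs $\mu_{\Gamma_n}$ in $\sub(\SL_2(\CC))$, and show $\mu = \delta_{\{1\}}$. After extraction I may assume $d:=[B_n:\QQ]$ is constant. The case $d=1$ is subsumed by Theorem \ref{Main1}, so I may assume $d\ge 2$; in this regime $F_n$ has $2(d-1)\ge 2$ real places, and the quaternion algebra $Q_n$ defining $\Gamma_n$ must ramify at all of them in order that $\Gamma_n$ embed discretely in $\SL_2(\CC)$, which prevents $Q_n$ from being globally split and forces $\Gamma_n$ to be cocompact. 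The hypothesis $D_{F_n/B_n}\to\infty$ together with bounded $[B_n:\QQ]$ gives $D_{F_n}\to\infty$ via the tower formula for discriminants.

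Restriction of scalars from $F_n$ to $\QQ$ realizes $\Gamma_n$ as the projection to the non-compact factor of an irreducible arithmetic lattice $\Lambda_n\subset G:=\SL_2(\CC)\times K$, where $K\cong\SO(3)^{2(d-1)}$ is a fixed compact Lie group (one factor for each pair of real places of $F_n$ above the $d-1$ real places of $B_n$ not underlying the complex place). Since $K$ is compact, BS-convergence of $\Lambda_n$ in $G$ is equivalent to BS-convergence of $\Gamma_n$ in $\SL_2(\CC)$. A non-trivial element $\gamma\in\Gamma_n$ contributing to the $R$-thin part of $M_n$ admits a conjugate in $\SL_2(\CC)$ of norm bounded in terms of $R$ alone, while its images at the remaining archimedean places of $F_n$ automatically lie in $K$ and are thus uniformly bounded; hence the trace $\tr(\gamma)\in\so_{F_n}$ has all archimedean absolute values bounded uniformly in $n$, and so bounded Mahler measure. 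Since $[F_n:\QQ]\le 2d$ is bounded, Northcott's theorem produces a finite set $T\subset\ovl\QQ$ containing all such traces across all $n$; cocompactness of $\Gamma_n$ excludes $\pm 2$ from appearing as actual traces, so each $t\in T$ corresponds to elements of translation length (or rotation angle) bounded below by a positive constant depending only on $T$.

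The main obstacle is to show that the total volume of the $R$-thin part is $o(\vol M_n)$. For each $t\in T$ the number of $\Gamma_n$-conjugacy classes of elements of trace $t$ equals, up to a bounded factor, the number of optimal embeddings of the quadratic order $\so_{F_n}[X]/(X^2-tX+1)$ into the order of $Q_n$ defining $\Lambda_n$; by the classical theory of such embeddings (going back to Eichler) this count is bounded by a class number in $F_n(\sqrt{t^2-4})$, which is $O_\eps(|D_{F_n}|^{1/2+\eps})$ via Brauer--Siegel. Each conjugacy class contributes a uniformly bounded amount (in $R$ and $T$) to the $R$-thin volume, via the tube formula applied to loxodromic or elliptic elements of bounded trace. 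Combining these with the lower bound $\vol(M_n)\gg_\eps |D_{F_n}|^{3/2-\eps}$ from Borel's volume formula for arithmetic Kleinian groups, one obtains $\vol(M_n)_{\le R}\ll_R |D_{F_n}|^{1/2+\eps}=o(\vol M_n)$, proving BS-convergence.
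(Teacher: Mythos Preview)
Your approach is genuinely different from the paper's. The paper's proof is soft and uses the ``simplest type'' hypothesis in an essential way: by the Northcott-type Lemma~\ref{nb_fini} the eigenvalue of any hyperbolic $\gamma\in\Gamma_n$ of displacement $\le R$ lies in $F_n(\sqrt{d})$ for one of finitely many $d\in\ovl\ZZ$ (independent of $n$); if such a $d$ is not totally real then $d\in F_n$ would force $F_n=B_n(d)$, but $D_{B_n(d)/B_n}$ is bounded in $n$, contradicting the hypothesis for large $n$. Hence eventually every short hyperbolic element has totally real (in particular real) trace, and Lemma~\ref{conv_fuch} then shows that any IRS limit of such a sequence is supported on Fuchsian subgroups of $\PSL_2(\CC)$, hence equals $\delta_{\Id}$ by the IRS Borel density theorem. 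No conjugacy-class count, no Brauer--Siegel, no volume comparison. Your quantitative route, if carried through, would yield an effective bound on the thin part and would apply to any cocompact sequence with bounded $[F_n:\QQ]$ and $D_{F_n}\to\infty$, not just the simplest type; the price is heavier input.

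A few points in your argument need tightening. Theorem~\ref{Main1} does \emph{not} subsume the case $d=1$: it treats only maximal or congruence lattices, whereas Theorem~\ref{Main2} allows arbitrary arithmetic $\Gamma_n$. This gap (and the analogous one for $d\ge 2$, where Eichler's embedding count is stated for unit groups of orders, not arbitrary commensurable subgroups) is filled by observing that $\vol(M_n)_{\le R}/\vol M_n$ is dominated by the corresponding ratio for any maximal lattice containing $\Gamma_n$---but you should say so. Your Brauer--Siegel exponent is off by a factor of two: for fixed $t$ one has $D_{E_n}\asymp D_{F_n}^2$ for $E_n=F_n(\sqrt{t^2-4})$, so $h_{E_n}\ll_\eps D_{F_n}^{1+\eps}$, not $D_{F_n}^{1/2+\eps}$; the comparison with $\vol M_n\gg D_{F_n}^{3/2-\eps}$ still succeeds, with a thinner margin. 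Finally, the embedding count carries a factor $\le 2^{|S|}$ from the ramification set $S$ of $Q_n$, which is $\ll(\vol M_n)^\eps$ but deserves a line.
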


The first hypothesis on $M_n$ is equivalent to the statement that $M_n$ is an arithmetic manifold of the simplest type (in the commonly used terminology), i.e. it contains immersed totally geodesic hypersurfaces, or equivalently its commensurability class is defined by a quadratic form over $B_n$ (using the isogeny from $\SO(3,1)$ to $\SL_2(\CC)$). Note that Theorem \ref{Main2} includes the case of lattices defined over imaginary quadratic fields whose discriminant goes to infinity but not that of lattices defined over cubic fields.


\subsubsection{$\SL_2$}

As explained above, the proof we give for Theorem \ref{Main1} in the quadratic case rests on the study of the special case of Bianchi groups. We will actually study the latter within a slightly larger problem which we now explain. Fix $(r_1,r_2)$ and let $\sF$ be the set of all number fields which have signature equal to $(r_1,r_2)$; we will denote by $r = r_1 + 2r_2$ the degree of those fields. For $F \in \sF$ let $\Gamma_F$ be the group $\SL_2(\so_F)$; then all $\Gamma_F$ are nonuniform arithmetic lattices in the Lie group $G_\infty=\SL_2(\RR)^{r_1}\times\SL_2(\CC)^{r_2}$, and we let $M_F$ be the finite--volume orbifold $\Gamma_F\bs X$ where $X=(\HH^2)^{r_1}\times(\HH^3)^{r_2}$ is the symmetric space associated to $G_\infty$. Then we can ask the following question. 

\begin{queststar}
Are the orbifolds $M_F$  BS-convergent to $X$ as $D_F\to+\infty$ and $F\in\sF$? 
\end{queststar}

Here and in the remainder of this paper, we use $D_F$ to denote the {\it absolute value} of the discriminant of the field $F$. As stated in Theorem \ref{Main1}, this has a positive answer for the Bianchi orbifolds (the case $(r_1,r_2)=(0,1)$). 

\begin{theostar}
For all $R>0,\eps>0$ there is a $C>0$ such that for any imaginary quadratic field $F$ we have
$$
\vol (M_F)_{\le R} \le C(\vol M_F)^{1-(1/3-\eps)}. 
$$
\label{conv_Bianchi}
\end{theostar}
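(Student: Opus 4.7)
The plan is to bound $\vol((M_F)_{\le R})$ by splitting the $R$-thin part into its cuspidal and non-cuspidal contributions, estimating each as a function of the discriminant $D_F$, and then combining these estimates with Humbert's volume formula and with the Brauer--Siegel bound on the class number.

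For the cuspidal contribution, the cusps of $M_F$ are in canonical bijection with the ideal class group $\mathrm{Cl}(F)$, so their number equals the class number $h_F$. Using the explicit description of cusp stabilizers in Bianchi groups, each cusp admits a representative horoball whose horospherical cross-section has area $\ll \sqrt{D_F}$, independently of which ideal class parametrizes the cusp; since the injectivity radius decays exponentially as one penetrates into a cusp, the intersection of this horoball with $(M_F)_{\le R}$ has volume $\ll_R \sqrt{D_F}$. Summing over the $h_F$ cusps yields a cuspidal thin part of volume at most $C_R\cdot h_F\sqrt{D_F}$.

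For the non-cuspidal contribution, any loxodromic $\gamma\in\SL_2(\so_F)$ of translation length at most $R$ satisfies $\tr(\gamma)\in\so_F$ with $|\tr(\gamma)|\le 2\cosh(R/2)$; once $D_F$ exceeds $4\cosh^2(R/2)$, the only such traces are the rational integers in that range, of which there are $O_R(1)$. For each such fixed trace $t$, the $\SL_2(\so_F)$-conjugacy classes with trace $t$ correspond to ideal classes in an order of the quartic CM field $F(\sqrt{t^2-4})$ (of absolute discriminant $\ll D_F^2$), and Brauer--Siegel bounds their number by $\ll D_F^{1+\eps}$. Since each Margulis tube has volume $\ll_R 1$ and torsion contributions are controlled in the same way by counting elliptic traces, the non-cuspidal thin part has volume $\ll_{R,\eps} D_F^{1+\eps}$.

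Combining these with Humbert's formula $\vol(M_F)=D_F^{3/2}\zeta_F(2)/(4\pi^2)\asymp D_F^{3/2}$ (the Dedekind zeta value $\zeta_F(2)$ being uniformly bounded above and below) and with $h_F\ll_\eps D_F^{1/2+\eps}$, one obtains
$$
\vol((M_F)_{\le R})\ll_{R,\eps} h_F\sqrt{D_F}+D_F^{1+\eps}\ll D_F^{1+\eps}\asymp (\vol M_F)^{2/3+2\eps/3},
$$
which after a reparametrization of $\eps$ yields the stated inequality with $\delta=1/3-\eps$. The main obstacle will be securing the uniform per-cusp bound on horoball cross-section area, which must hold for every ideal class: this requires a careful choice of representative horoball (for instance the maximal embedded one) and relies on the explicit geometry of Bianchi cusps. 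The exponent $1/3$ reflects the ratio $\log(h_F\sqrt{D_F})/\log\vol(M_F)\to 2/3$, and the $\eps$ in the statement is inherited from Brauer--Siegel; a subconvex (e.g.\ GRH-assisted) class number bound would allow $\eps=0$.
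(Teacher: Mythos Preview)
Your approach is sound and reaches the same bound $\vol(M_F)_{\le R} \ll_{R,\eps} D_F^{1+\eps}$, but by a genuinely different route than the paper's. The paper counts short closed geodesics \emph{geometrically}: it covers $M_F$ by Siegel domains (horoballs) using quantitative reduction theory (Ohno--Watanabe), and for each horoball counts the geodesics passing through it via an explicit enumeration of integral binary quadratic forms over $\so_F$ with bounded leading coefficient (Lemma~\ref{count}); summing over the $h_F$ cusps produces $D_F^{1+\eps}$. Your approach is \emph{arithmetic}: you fix the trace (finitely many rational integers once $D_F$ is large), invoke the correspondence between conjugacy classes and ideal classes in the biquadratic CM field $E=F(\sqrt{t^2-4})$, and bound $h_E$ via the class number formula. (The $\eps$ really comes from the elementary bound $h_ER_E\ll\sqrt{D_E}(\log D_E)^3$ together with $R_E\asymp_t R_{\QQ(\sqrt{t^2-4})}$, not from Brauer--Siegel proper, which is an asymptotic; your closing remark about GRH is therefore misplaced, since the bound is already $D_F\cdot\mathrm{polylog}(D_F)$ unconditionally.) The paper's geometric method is what generalizes to higher signatures in Section~\ref{main} (real quadratic and cubic fields) and incidentally yields the corollary on $h_F(d)$ in the introduction; your method is cleaner for the Bianchi case specifically, exploiting that $\so_F^\times$ is finite.

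There is one genuine gap in your treatment of torsion. Counting elliptic conjugacy classes by trace only bounds their \emph{number}; but each elliptic class contributes to the $R$-thin part a tube whose volume is proportional to the \emph{length} of its fixed axis in $M_F$, and that length is the translation length of the primitive loxodromic sharing the axis, which is not a priori bounded by $R$. The missing observation (compare Lemma~\ref{vol_sing} and \eqref{vol_flat}) is that for $D_F$ large the discriminants $d$ carrying torsion lie in a fixed finite set of rational integers with $|d|\le 4$, and for each such $d$ the length of the associated geodesic equals, up to bounded factors, $R_{F(\sqrt d)}/R_F$; this is bounded independently of $F$ because the units of the CM field $F(\sqrt d)$ coincide (modulo roots of unity) with those of the fixed real quadratic field $\QQ(\sqrt{d'})$. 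With this added, your argument goes through.
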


We also study real quadratic fields $F$, for which we get the following result. 

\begin{theostar}
There exists a sequence $F_n$ of pairwise distinct real quadratic fields for which the following holds: for all $R>0, \eps > 0$ there is a $C>0$ such that for all $n$ we have 
$$
\vol (M_{F_n})_{\le R} \le C (\vol M_{F_n})^{1-1/10+\eps}. 
$$
\label{Hilbert_conv}
\end{theostar}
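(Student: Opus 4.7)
The strategy, following Theorem \ref{conv_Bianchi} in the imaginary quadratic case, is to estimate the thin part of the Hilbert modular surface $M_F = \SL_2(\so_F)\bs(\HH^2)^2$ by bounding cuspidal and semisimple contributions separately, then choosing the sequence $F_n$ so that the bounds combine to give the stated exponent. Throughout I use Siegel's formula $\vol(M_F) \asymp \zeta_F(2) D_F^{3/2}$, so that $\vol(M_F)^{9/10}\asymp D_F^{27/20}$ up to a factor $D_F^{\eps}$.

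\textbf{Step 1 (decomposition of the thin part).} By a Margulis-type argument applied to each hyperbolic factor one writes
$$(M_F)_{\le R}\subseteq T_{\cusp}\cup T_{\mathrm{ss}},$$
where $T_{\cusp}$ is the union of the $R$-thin parts of the cusp neighbourhoods (short element in a cusp stabiliser) and $T_{\mathrm{ss}}$ is a union of tubes around axes of short semisimple elements of $\SL_2(\so_F)$.

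\textbf{Step 2 (cuspidal contribution).} The cusps of $M_F$ are in bijection with the ideal class group, giving exactly $h_F$ of them; each has stabiliser containing, up to finite index, the solvable group $\so_F\rtimes\so_F^{\times,+}$ acting on the horoballs $\{y_1 y_2\ge T\}$. Applying Minkowski's theorem to the lattice $\so_F\subset\RR^2$ (embedded via $(\sigma_1,\sigma_2)$) shows that a point $(z_1,z_2)$ lies in the $R$-thin part of a cusp precisely when $y_1 y_2\ge T_0\asymp D_F^{1/2}/R^2$. Using horoball coordinates $t=\log(y_1 y_2)$, $s=\log(y_1/y_2)$, the volume of the thin region of a single cusp is
$$\int_{\log T_0}^{\infty} e^{-t}\,dt\cdot D_F^{1/2}\cdot R_F\ \asymp\ \frac{D_F^{1/2}R_F}{T_0}\ \asymp\ R^2 R_F.$$
Summing over cusps yields $\vol(T_{\cusp})\le C(R)\, h_F R_F$. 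The class number formula combined with the unconditional Littlewood-type bound $L(1,\chi_F)=O(\log D_F)$ gives $h_F R_F=O(D_F^{1/2}\log D_F)$, so the cuspidal part alone is comfortably negligible compared to $\vol(M_F)^{9/10}$.

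\textbf{Step 3 (semisimple contribution).} A semisimple $\gamma\in\SL_2(\so_F)$ contributing to $T_{\mathrm{ss}}$ satisfies $|\tr(\gamma^{\sigma_i})|\le 2\cosh(R)+O(1)$ for $i=1,2$. Counting algebraic integers in this box and then bounding the number of $\SL_2(\so_F)$-conjugacy classes of a given trace by a class number in a (real or CM) quartic extension $F[\gamma]$, together with a volume estimate $O_R(1)$ for the tube around each axis, gives $\vol(T_{\mathrm{ss}})\le C(R)\, D_F^{\alpha}$ for some $\alpha<3/2$. The precise exponent $\alpha$ depends on effective bounds for the class numbers of orders in quartic extensions of $\QQ$ containing $F$, and this is where the real quadratic case is genuinely harder than the imaginary quadratic one: those class numbers are not uniformly under control because of the potential presence of Siegel zeros in the corresponding Dedekind zeta functions.

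\textbf{Step 4 (choice of the sequence and conclusion).} Putting the two estimates together gives $\vol((M_F)_{\le R})\le C(R)\, D_F^{\alpha}$ for some $\alpha<3/2$, and the desired bound $\vol((M_F)_{\le R})\le C(R)\vol(M_F)^{9/10}\asymp C(R)D_F^{27/20}$ holds provided $\alpha\le 27/20$. Since the ineffective pieces of the argument come from a finite list of Dedekind zetas (those of $F$ and of the quartic extensions $F[\gamma]$ for the bounded list of admissible traces), Landau's exceptional-zero argument shows that the ``bad'' fields are at most one per dyadic interval of discriminants. I therefore choose $F_n$ to be a sequence of real quadratic fields with $D_{F_n}\to\infty$ avoiding these exceptional fields, along which the bound with exponent $1-1/10$ holds uniformly.

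\textbf{Main obstacle.} The principal difficulty is the lack of effective control on $h_F R_F$ and on the class numbers of orders in the CM or totally real quartic extensions associated with short semisimple elements. This is why the result is only stated for \emph{some} sequence rather than for all real quadratic fields, and why the exponent is as modest as $1/10$: the slack is needed to absorb the ineffective $o(1)$ coming from Brauer--Siegel along the chosen sequence.
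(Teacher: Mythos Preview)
Your cuspidal estimate (Step 2) is fine and essentially matches the paper's \eqref{vol_unip_fin}--\eqref{vol_cusphyp_fin}. The serious gap is in Step 3.

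In $(\HH^2)^2$ a semisimple $\gamma\in\SL_2(\so_F)$ with $\tr\gamma=t\in\so_F$ and $d=t^2-4$ totally positive is hyperbolic in \emph{both} factors; its min-set is a two--dimensional flat, not a one--dimensional axis. The volume of the $R$-tube around such a flat is not $O_R(1)$: it is $O_R(1)$ times the \emph{area} of the flat in the quotient, and by \eqref{vol_flat} that area is $\asymp R_E/R_F$ with $E=F(\sqrt d)$. For large $D_F$ Lemma \ref{nb_fini} forces $d$ into a fixed finite subset of $\ZZ$, so these $E$ are genuine totally real quartic extensions whose regulators are \emph{a priori} as large as $D_E^{1/2+o(1)}\asymp D_F^{1+o(1)}$. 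This, and not the class numbers of orders in $E$, is the obstruction; your ``$O_R(1)$ per tube'' claim and the ensuing exponent count are therefore unjustified. (In the Bianchi case the flats are one--dimensional, which is exactly why that case is easier.)

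Your Step 4 is also not right as written: for each fixed $d$ the field $E=F(\sqrt d)$ varies with $F$, so there is no ``finite list of Dedekind zeta functions'' to which a Landau/Page exceptional-zero argument applies. One could try to run such an argument on the Dirichlet $L$-functions of the three quadratic subfields of the biquadratic $E/\QQ$, but you have not done so, and it would still not repair the tube-volume error.

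The paper avoids all analytic input by an elementary regulator trick. For $E=\QQ(\sqrt D,\sqrt d)$ set $F'=\QQ(\sqrt{dD})$; the fundamental units of $F$, $F'$, $\QQ(\sqrt d)$ have pairwise orthogonal $\Log$-images in $\RR^4$, so $R_E\le C(d)\,R_F R_{F'}$ (equation \eqref{mirror}). Now run a dichotomy at the threshold $D^{1/5}$: if both $R_F$ and $R_{F'}$ are $\le D^{1/5}$ one gets $R_E/R_F\ll D_F^{2/5}$ and takes $F_n=F$; if instead $R_{F'}\ge D_{F'}^{1/5}$ one switches to $F_n=F'$ and uses the trivial bound $R_{E'}\ll D_{E'}^{1/2}(\log D_{E'})^3$ to get $R_{E'}/R_{F'}\ll D_{F'}^{3/10+o(1)}$ for \emph{every} relevant $d'$. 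Either way $R_{E_n}/R_{F_n}\ll D_{F_n}^{2/5}$, and plugging into the flat-count \eqref{vol_hyp_fin} (which gives $D_F^{1+\eps}$ for $r=2$) yields the theorem. No Siegel zeros, no Brauer--Siegel, just the biquadratic structure of $E$.
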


The $1/10$ is not optimal, even with our arguments. We note that a solution to Gauss' conjecture that there are infinitely many real quadratic number fields of class-number one would yield such a sequence (with a much better estimate), and actually a good enough approximation to it also does. Let us make this rigorous: if $F_n = \QQ(\sqrt{D_n})$ satisfies $h_{F_n}\ll D_{F_n}^a$ for some $0<a<1/4$ then the sequence $M_{F_n}$ BS-converges to $\HH^2\times\HH^2$, with an estimate on the volume of the thin part of the order $D_{F_n}^{1+2a+\eps}$ for all $\eps>0$; in particular an estimate like $h_{F_n}\ll\log D_{F_n}$ is as efficient as the solution to Gauss conjecture would be here. \\

Let us now briefly discuss the proof of these theorems; until the last steps we actually work with fields of arbitrary signature. Recall that in both cases the total volume of $M_F$ is of order $D_F^{3/2}$. To estimate the volume of the thin part of $M_F$ it is natural to distinguish between components according to whether they correspond to cuspidal elements (belonging to an $F$-rational parabolic subgroup) or not. The part corresponding to the cuspidal elements is sufficiently explicit, and we can relate its volume to well--known invariants of the number field $F$ (namely its discriminant, class-number and regulator) to prove that it is bounded above by a power $<1$ of the volume. The rest of the thin part corresponds to compact flats and we use a method already present in Bianchi's work to estimate the number of such flats in $M_F$. Namely, reduction theory provides us with a covering of $M_F$ by horoball regions (`Siegel domains'), one for each cusp of $M_F$. We actually need a quantitative description of the reduction theory of $\G$, for which we rely on results of S. Ohno and T. Watanabe \cite{Ohno_Watanabe}. It is then easy to count the number of geodesics of a given length through one of these horoballs, and summing over all cusps we get an estimate for the number of geodesics of a given length. Since the cusps of $M_F$ are in one-to-one correspondance with the class group of $F$, we get a sum over the latter. The upper bound we obtain is then roughly $D_F^{\frac{r-1}2} \sum_j|\frc_j|^{-r+2}$, where $\frc_j$ are representants of least possible norm for the ideal classes of $F$. It simplifies to $D_F$ for $r=2$; for $r\ge 4$ it diverges because of the cusp at infinity ($\frc=\so_F$), and for $r=3$ we can estimate it using an elementary argument about the distribution of norms of ideal classes (note that a much more precise statement holds: by a difficult result of M. Einsiedler, E. Lindenstrauss, P. Michel and A. Venkatesh \cite{ELMV} the ideal classes become equidistributed in the space of unimodular three--dimensional Euclidean lattices as the discriminant goes to infinity). 

The difficulties do not stop there, since we must then estimate the volume of the flats entering the count: these volumes are related to the relative regulators of quadratic extensions of $F$, and in general we did not manage to get good enough estimates for them. For $F$ imaginary quadratic there is no problem (this is the only case where the flats are 1-dimensional, i.e. geodesics), and for $F$ real quadratic we can use an ad hoc argument to find a sequence where we can control them. For the cubic case we do not know how to deal with this. 

Finally, note that via the Jacquet--Langlands correspondance, if we can estimate the number of maximal compact flats with given systole in $M_F$ for $F$ of signature $(r_1,r_2)=(r-2,1)$ we get as a corollary the statement that a sequence of minimal arithmetic hyperbolic 3--orbifolds defined over fields of degree $r$ is BS-convergent to $\HH^3$ (we do not need estimates for the volume of the flats for this--in fact we care only about those of dimension 1). This is how the cubic case of Theorem \ref{Main1} is proven.


\subsubsection{Other cases}

We could also deduce from our methods a few other examples of BS-convergent maximal sequences; since our interest is mostly in hyperbolic three--manifolds we will only state them briefly:
\begin{itemize}
\item Sequences of maximal arithmetic Fuchsian lattices defined over totally real fields of degree less than 3 are BS-convergent to $\HH^2$;
\item Sequences of irreducible cocompact lattices in $\SL_2(\RR\times\RR)$ defined over the fields in Theorem \ref{Hilbert_conv} are BS-convergent to $\HH^2\times\HH^2$. 
\end{itemize}
The argument of Theorem \ref{Main2} can also be applied to higher-degree CM-fields to yield results of convergence towards $(\HH^3)^r$ for $r>1$. 


\subsection{Applications}

\subsubsection{Quadratic forms}

As we mentioned the proof of Theorem \ref{conv_Bianchi} we give is a quantification of some of Bianchi's arguments in \cite{Bianchi}; one of the purposes of his work was to establish the finiteness of the number $h_F(d)$ of classes of integral quadratic forms over $F$ modulo $\SL_2(\so_F)$ with discriminant $d$. Bianchi's paper does not give an explicit estimate in terms of either $d$ or $D_F$: using our result we obtain the following estimate. 

\begin{corstar}
For every $\eps>0$ there is a $C_\eps>0$ such that for every discriminant $d\in\so_F$ and every quadratic field $F$ we have $h_F(d) \le C_\eps (d\cdot D_F)^{1+\eps}$. 

There are $\delta,C_\eps>0$ such that for every cubic field $F$ we have $h_F(d) \le C_\eps d^{3/2+\eps}\cdot D_F^{3/2-\delta}$. 
\end{corstar}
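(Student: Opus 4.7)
\medskip

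\noindent\textbf{Proof plan.}
The strategy is to reinterpret $h_F(d)$ as a count of closed flats (more precisely, of primitive conjugacy classes of loxodromic elements) in $M_F$ and then feed this into the counting arguments developed inside the proofs of Theorem~\ref{conv_Bianchi} and of the cubic case of Theorem~\ref{Main1}.

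First I would set up the classical Gauss--style bijection between $\SL_2(\so_F)$--orbits of integral binary quadratic forms of discriminant $d$ and $\SL_2(\so_F)$--conjugacy classes of elements of $\SL_2(\so_F)$ with $\tr=t$, $\det=1$ and $t^2-4=d$; equivalently, with ideal classes in the $\so_F$--order $\so_F[T]/(T^2-tT+1)$. If no such $t\in\so_F$ exists then $h_F(d)=0$ and the inequality is trivial. Each such conjugacy class produces either an elliptic point or a primitive closed flat in $M_F$, and the eigenvalue $\alpha=(t+\sqrt d)/2$ has absolute value $\asymp|d|^{1/2}$, so the systole of the associated flat is $\ell=O_F(\log|d|)$.

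Next I would invoke the counting of closed flats per cusp established in the proofs of the main theorems. That argument bounds the number of flats of prescribed systole passing through the Siegel set at a cusp represented by the ideal class $\frc$ by a product of three factors: a horospherical factor $D_F^{(r-1)/2}|\frc|^{-(r-2)}$ coming from the quantitative reduction theory of \cite{Ohno_Watanabe}; a divisor-type factor counting the admissible lower-left matrix entries once the trace is fixed; and a polynomial factor in $\ell$ which is harmless since $\ell\ll\log|d|$. In the quadratic case ($r=2$) the cuspidal sum is simply $h_F$, so combining with Minkowski's bound $h_F\ll_\eps D_F^{1/2+\eps}$, the horospherical factor $D_F^{1/2}$, and the divisor/entry count $\ll_\eps|d|^{1+\eps}$ gives $h_F(d)\ll_\eps(d\cdot D_F)^{1+\eps}$ as asserted.

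In the cubic case ($r=3$) the cuspidal sum is $D_F\sum_j|\frc_j|^{-1}$, and the elementary repartition of norms of minimal ideal-class representatives sketched in the introduction bounds it by $\ll D_F^{3/2-\delta}$ for some absolute $\delta>0$. The trace/entry bookkeeping now carried out in an $\so_F$ of rank three, reflecting the two-dimensional nature of the closed flats, contributes a factor $\ll_\eps|d|^{3/2+\eps}$, yielding $h_F(d)\ll_\eps|d|^{3/2+\eps}\,D_F^{3/2-\delta}$. The main obstacle I foresee is the cubic case: on the one hand one must check carefully that the divisor/entry bookkeeping in rank three, involving two independent trace-like invariants, indeed gives the exponent $3/2$ on $|d|$ rather than the weaker bound that a naive extension of the quadratic argument would produce; on the other hand the power saving $\delta>0$ is entirely controlled by the elementary repartition of class norms, which is the only non-trivial analytic input and which fixes the quality of the exponent.
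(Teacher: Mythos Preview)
Your core strategy---identify $h_F(d)$ with the number of $\Gamma_F$-classes of integral forms of discriminant $d$, bound the number of such forms meeting each Siegel horoball via Lemma~\ref{count}, and sum over the cusps---is exactly what the paper intends (the Corollary is not given a separate proof; it is stated as a direct byproduct of the proof of Lemma~\ref{count} and the cuspidal summation in Section~\ref{main}). Your numerical bookkeeping is also correct: in the proof of Lemma~\ref{count} the $d$-dependence is $|d|_\infty^{r/2+\eps}$, giving $|d|^{1+\eps}$ for $r=2$ and $|d|^{3/2+\eps}$ for $r=3$, and the cuspidal sums behave as you say.

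Your opening detour through conjugacy classes, however, is both unnecessary and flawed. The claim ``if no $t\in\so_F$ with $t^2-4=d$ exists then $h_F(d)=0$'' is false: for any $d\in\sd_F$ one has the form $x_1^2+bx_1x_2+\frac{b^2-d}{4}x_2^2$ (with $b^2\equiv d\pmod 4$), so $h_F(d)>0$ regardless of whether $d+4$ is a square. What \eqref{param} actually says is that the elements of $\Gamma_q$ have trace $t$ with $t^2-u^2d=4$ for some $u$, not $t^2-4=d$. Consequently your systole bound $\ell=O(\log|d|)$ is also wrong: the length of the primitive geodesic is governed by the fundamental relative unit of $F(\sqrt d)/F$, which for $d\in\ZZ_{>0}$ and $F$ imaginary quadratic is essentially $R_{\QQ(\sqrt d)}$ and can be of size $d^{1/2-o(1)}$, not $\log|d|$. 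Fortunately none of this matters: Lemma~\ref{count} counts triples $(a,b,c)$ with $b^2-4ac=d$ directly, and the factor $|d|_\infty^{r/2}$ comes from the lattice-point count of $a\in\frb^{-2}$ with $|a|_\infty\le |d|_\infty^{1/2}/Y$, with no trace or systole input required. Drop the conjugacy-class framing, keep the direct form count, and your argument is the paper's.
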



\subsubsection{Fibered arithmetic manifolds}

We can use Theorem \ref{Main1} to prove a result on the virtual fibrations of arithmetic manifolds; the proof of this was suggested by a discussion with J. Souto. We need to introduce some more terminology on hyperbolic three--manifolds to state the result. It is a seminal result of W. Thurston that for a so--called pseudo-Anosov diffeomorphism $\phi$ of a finitely triangulated surface $S$ of Euler characteristic $\chi(S)<0$, the three--manifold $M_\phi$ obtained as the mapping torus 
$$
M_\phi = S\times [0,1]/\sim, \: (\phi(x),1)\sim(x,0)
$$
has a complete hyperbolic structure of finite volume (this is proven for example in J. P. Otal's book \cite{Otal}). Amazingly, the converse is true up to finite covers: it was conjectured by Thurston, and recently proven by I. Agol \cite{Agol} (following work of D. Wise) that any complete hyperbolic three--manifold $M$ of finite volume admits a finite cover which is a fiber bundle over the circle; the image of a fiber in such a fibration under the covering map to $M$ is called a virtual fiber of $M$. Theorem 1.1 in \cite{BiringerSouto}, together with the fact that the Laplacian spectra on functions for congruence hyperbolic three--manifolds have a uniform lower bound \cite{tau}, implies that for a given $r$ there are only finitely many maximal or congruence lattices with invariant trace field of degree $r$ having a virtual fiber of a given genus: see Theorem 7.2 in loc. cit.. From our results we can recover the following special case of this. 

\begin{corstar}
For any $g\ge 2$ there are at most finitely many congruence arithmetic hyperbolic 3--manifolds defined over quadratic or cubic fields which contain a virtual fiber of genus $g$. 
\end{corstar}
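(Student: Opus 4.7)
The plan is to argue by contradiction. Suppose that for some fixed $g\ge 2$ there exists an infinite sequence $(M_n)$ of pairwise non-isometric arithmetic hyperbolic 3-manifolds, each either maximal or congruence, each defined over a number field of degree at most three, and each containing a virtual fiber of genus $g$. First, by Borel's finiteness theorem for arithmetic lattices in $\SL_2(\CC)$, there are only finitely many arithmetic 3-manifolds of covolume bounded by any given constant; hence one must have $\vol M_n\to +\infty$ along our sequence.

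Next, since each $M_n$ is congruence or maximal arithmetic and defined over a quadratic or cubic field, Theorem \ref{Main1} applies to yield Benjamini--Schramm convergence of $(M_n)$ to $\HH^3$: for every $R>0$,
$$
\vol (M_n)_{\le R} = o(\vol M_n) \quad \text{as } n\to\infty.
$$

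To derive the contradiction I would invoke Theorem 1.1 of \cite{BiringerSouto}. In the form needed here it provides constants $R_g, c_g>0$ depending only on $g$ such that for any finite-volume hyperbolic 3-manifold $M$ admitting a $\pi_1$-injective immersion of a closed orientable surface of genus $g$,
$$
\vol M \le c_g\bigl(\vol(M_{\le R_g}) + 1\bigr).
$$
A virtual fiber of genus $g$ in $M_n$ provides exactly such an immersion, so applying the inequality to each $M_n$ and combining with the previous step gives $\vol M_n \le c_g\bigl(o(\vol M_n)+1\bigr)$, which contradicts $\vol M_n\to +\infty$ for all sufficiently large $n$.

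The main technical point is to extract and apply Biringer--Souto's estimate in a form that is uniform across the sequence, including, if necessary, the cusped case (in which the virtual fiber should be interpreted as a fiber in a finite cover that fibers over $S^1$; one may still find a $\pi_1$-injective subsurface of controlled topology). The underlying geometric principle --- that an incompressible surface of bounded topological complexity can only carry a bounded amount of \emph{thick} volume of the ambient 3-manifold --- is the essential content of their theorem and is insensitive to the presence of cusps. Once the right quantitative statement is in hand, the argument is a direct combination of Theorem \ref{Main1} with the classical finiteness of arithmetic hyperbolic 3-manifolds of bounded volume.
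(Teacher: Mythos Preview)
Your overall architecture --- contradiction from Theorem \ref{Main1} (BS--convergence) against the existence of a fixed--genus virtual fiber --- is exactly the paper's. The gap is in the step where you invoke Biringer--Souto. Their Theorem~1.1 does \emph{not} give an inequality of the form $\vol M \le c_g(\vol(M_{\le R_g})+1)$ for every finite--volume $M$ carrying a $\pi_1$--injective genus--$g$ surface; their finiteness statement is conditional on a uniform lower bound for $\lambda_1$ (equivalently, a uniform spectral gap). The paper itself points this out in the paragraph preceding the corollary: the general finiteness result follows from Biringer--Souto \emph{together with} property~$(\tau)$ for congruence manifolds \cite{tau}. So as written your argument is missing precisely that spectral input, and the inequality you quote is not something you can extract from \cite{BiringerSouto} alone.

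The paper's own proof avoids this by a softer, purely geometric route. Passing to the finite covers that actually fiber with fiber $S$ (which still BS--converge to $\HH^3$, since for a cover $M'\to M$ one has $\vol(M')_{\le R}/\vol M' \le \vol(M)_{\le R}/\vol M$), one obtains a sequence of genuine mapping tori $M_n'$ with fixed fiber $S$. The key observation, recorded in the proof of \cite[Theorem~12.8]{7S}, is that \emph{every} pointed Gromov--Hausdorff accumulation point of such a sequence is a doubly degenerate hyperbolic structure on $S\times\RR$, never $\HH^3$. This immediately contradicts BS--convergence to $\HH^3$, which forces the generic pointed limit to be $\HH^3$. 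No volume inequality and no spectral gap are needed. If you want to salvage your approach, you must either supply the $\lambda_1$ bound (via property~$(\tau)$) before invoking Biringer--Souto, or replace the claimed inequality by this doubly--degenerate--limit argument.
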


\begin{proof}
Suppose the contrary, and let $S$ be a surface of genus $g$ which is a virtual fiber for infinitely many such manifolds. Then by Theorem \ref{Main1} we get that there exists a sequence of hyperbolic manifolds $M_n$ which is BS-convergent to $\HH^3$ and such that every $M_n$ is fibered over the circle with fiber $S$. But the latter fact implies that any pointed Gromov--Hausdorff accumulation point of the sequence $M_n$ has to be a so-called doubly degenerate manifold (cf. the proof of Theorem 12.8 in \cite{7S}), which contradicts the fact that $M_n$ BS-converges to $\HH^3$. 
\end{proof}

Replacing ``genus $\ge 2$'' by ``Euler characteristic $<0$'' we get a statement which deals also with noncompact manifolds and has the same proof. 


\subsubsection{Heegard genera}

A handlebody $H$ of genus $g$ is a regular neighbourhood of an embedding in $\RR^3$ of a wedge of $g$ circles. Taking two copies $H_1,H_2$ of $H$ and identifying their boundaries via an diffeomorphism of the boundary $\pl H$ (a closed surface of genus $g$) we get a closed three--manifold. The Heegard genus of a given closed three--manifold is defined to be the smallest $g$ such that $M$ can be obtained by the above construction with a handlebody of genus $g$. As in the case of fibered manifolds, bounds on the Heegard genus imply strong constraints on the geometry for hyperbolic manifolds. More precisely, D. Bachman, D. Cooper and M. White prove in \cite{BCW} that for every $g$ there is a $C(g)$ such that any hyperbolic three--manifold $M$ of Heegard genus $g$ has $\max\inj(M)\le C(g)$. Together with Theorem \ref{Main1} this implies the following result, which is a particular case of the results discussed by M. Gromov and L. Guth in \cite[Appendix A]{Gromov_Guth}. 

\begin{corstar}
Given $g>0$ there are at most finitely many congruence closed arithmetic hyperbolic three--manifolds of Heegard genus $g$ whose invariant trace-field is of degree 2 or 3. 
\end{corstar}


\subsubsection{Growth of Betti numbers}

One of the original motivations for the study of BS-convergent sequences of manifolds is that in such sequences one can relate the growth of Betti numbers to the so-called $L^2$-Betti numbers of the limit \cite[Theorem 1.15]{7S}. For hyperbolic three--manifolds, when the limit is the universal cover the $L^2$-Betti numbers vanish. Thus we get the following corollary of Theorem \ref{Main2}.

\begin{corstar} \label{Betti_lim}
Let $\Gamma_n$ be a sequence of maximal arithmetic or congruence lattices in $\SL_2(\CC)$ whose invariant trace fields are quadratic, or cubic and satisfying the conditions in Theorem \ref{Main1}. Then 
$$
\lim_{n\to+\infty} \frac{b_1(\Gamma_n)}{\vol(\Gamma_n\bs\HH^3)} = 0. 
$$
\end{corstar}

\begin{proof}
Let $\Gamma_n'$ be any sequence of torsion-free, finite index subgroups of $\Gamma_n$. Then by Proposition C in \cite{torsion1} (in the compact case it is Theorem 1.8 in \cite{7S}) we get that $b_1(\Gamma_n')=o(\vol\Gamma_n'\bs\HH^3)$. Since $b_1(\Gamma_n)\le b_1(\Gamma_n')$, it suffices to prove that there exists a constant $C$ independant of $n$ such that all $\Gamma_n$ have a torsion-free subgroup of index less than $C$. 

We will actually prove the following more general (well-known) claim: if $r,G_\infty$ are given, there is a $C$ such that for any arithmetic lattice $\Gamma$ in $G_\infty$ defined over a field $F$ of degree $r$, there is a torsion-free subgroup of $\Gamma$ of index less than $C$. To prove this we use a theorem of Minkowski (see \cite[Th\'eor\`eme 2.13]{spectre}) stating that if $\Pi$ is a subgroup of $\GL_d(\ZZ)$, the kernel of the reduction map from $\Pi$ to $\GL_d(\ZZ/3\ZZ)$ is torsion-free. Thus the claim reduces to showing that we can find a morphism from $\Gamma$ into $\GL_d(\ZZ)$ whose kernel is of uniformly bounded order, with a $d$ depending only on $G_\infty$ and $r$. This is well-known, and can be proven as follows. Let $\G$ be the algebraic goup over $F$ defining the commensurability class of $\Gamma$, and let $\ovl\G$ be its adjoint group and $\frg_F$ the $F$-Lie algebra of $\G$. By \cite[Proposition 1.2]{Borel_Prasad}, we have that the image $\ovl\Gamma$ of $\Gamma$ in the adjoint group of $G_\infty$ is contained in $\ovl\G(F)$ ; hence we get a map $\rho:\Gamma\to\GL(\frg_F)$, whose kernel contains only central elements of $\Gamma$, and hence is of order less than the number of roots of unity contained in $F$ (itself bounded by a constant depending only on $r$). By a local-global argument it is easily seen that $\rho(\Gamma)$ stabilizes an $\so_F$-lattice in $\frg_F$, hence $\rho$ induces a map $\Gamma\to\GL_d(\so_F)$ where $d=\dim_F(\frg_F)$ whose kernel has its order bounded by a constant depending only on $r$. Weil's restriction of scalars yields an embedding of $\GL_d(\so_F)$ into $\GL_{rd}(\ZZ)$, which concludes the proof of the claim.  
\end{proof}

In particular for Bianchi groups, whose covolume has the asymptotic behaviour $\vol(\Gamma_D \bs \HH^3) \asymp D^{3/2}$, we get the following limit:
$$
\lim_{D\to+\infty} \frac{b_1(\Gamma_D)}{D^{3/2}} = 0
$$
which does not seem to have been previously known. Lower bounds of the order $D\asymp (\vol M_D)^{\frac 2 3}$ are known for $b_1(\Gamma_D)$ (see \cite{Rohlfs_bianchi}), and there are computations of $H_1(\Gamma_D)$ for $D\le 1867$ in \cite{Rahm}. An interesting question would be to determine whether the limit 
$$
\lim_{D\to+\infty}\frac{\log b_1(\Gamma_D)}{\log D}
$$
exists, and to evaluate it. The existing data indicates that it could be as small as possible, i.e. one: it may well be that almost all the cuspidal cohomology of the Bianchi groups comes from base change (which gives a lower bound of the order of Rohlfs'); cf. \cite{RahmSeng} for more information. 


\subsubsection{Torsion homology}

As in \cite[Section 10]{7S} one can apply the convergence result above (Theorem \ref{Main1}) to the growth of the torsion homology of uniform arithmetic lattices in $\SL_2(\CC)$, with coefficients in nontrivial modules. Here is a sample of what can be proven: if $\Gamma_n$ is a sequence of maximal lattices with quadratic trace field and we choose for each $n$ an $\mathrm{ad}\Gamma_n$-stable lattice$L_n$ inside $\mathfrak{sl}_2(\CC)$ then we have 
\begin{equation} \label{torsion}
\lim_{n\to+\infty} \frac {\log |H_1(\Gamma_n, L_n)|} {\vol(\Gamma_n\bs\HH^3)} = \frac {13} {6\pi}. 
\end{equation}
and this generalizes without much problem to cubic lattices and to higher-dimensional local systems. 

On the other hand the problem for Bianchi groups seem more complicated. It seems that the arguments of \cite{Mueller_Pfaff} can be adapted to this setting to prove the required asymptotics of analytic torsion, but assuming that there would still be some nontrivial analysis to perform to be able to conclude that \eqref{torsion} holds. Let us mention here that the computations of A. Rahm alluded to above show that the order of torsion classes in the first homology with trivial coefficients of the Bianchi groups can be quite large. 


\subsubsection{Trace formulae and limit multiplicities}

The following `limit multiplicities' result is an immediate consequence of Theorem \ref{Main1} and \cite[Theorem 1.2]{7S}. 

\begin{corstar}
Let $\Gamma_n$ be a sequence of maximal or congruence uniform arithmetic lattices in $G_\infty=\SL_2(\CC)$ defined over quadratic or cubic fields with the conditions in Theorme \ref{Main1}; for a unitary representation $\pi$ of $G_\infty$ let $m(\pi,\Gamma_n)$ denote the multiplicity of $\pi$ in $L^2(\Gamma_n\bs G_\infty)$. Then for any regular, bounded open subset $S$ of the unitary dual of $G_\infty$ we have
$$
\lim_{n\to+\infty} \frac{\sum_{\pi\in S} m(\pi,\Gamma_n)}{\vol\Gamma_n\bs\HH^3} = \nu^{G_\infty}(S)
$$
where $\nu^{G_\infty}$ is the Plancherel measure on the unitary dual. 
\end{corstar}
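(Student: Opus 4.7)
The plan is to apply Theorem \ref{Main1} in conjunction with the limit multiplicities result [7S, Theorem 1.2]; the corollary is indeed asserted above to be an immediate consequence of these two statements. First I would invoke Theorem \ref{Main1} to obtain the geometric input: for any sequence $\Gamma_n$ of maximal or congruence cocompact arithmetic lattices in $\SL_2(\CC)$ defined over a quadratic or cubic field, the quotient orbifolds $M_n = \Gamma_n\bs\HH^3$ are Benjamini--Schramm convergent to $\HH^3$.

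Next I would feed this geometric statement into [7S, Theorem 1.2]. The latter is a general result of the following shape: if $\Gamma_n$ is a sequence of uniform lattices in a semisimple Lie group $G_\infty$ such that $\Gamma_n\bs X$ is BS-convergent to $X = G_\infty/K_\infty$, then the normalized spectral counting measures
$$
\mu_n = \frac{1}{\vol(\Gamma_n\bs G_\infty)} \sum_{\pi \in \wdh{G_\infty}} m(\pi, \Gamma_n) \delta_\pi
$$
on the unitary dual $\wdh{G_\infty}$ converge weakly to the Plancherel measure $\nu^{G_\infty}$ in the sense of continuity sets. Since a \emph{regular, bounded open} subset $S$ of $\wdh{G_\infty}$ is by definition one satisfying $\nu^{G_\infty}(\partial S) = 0$, that is, a continuity set for $\nu^{G_\infty}$, evaluating the weak convergence at $S$ gives $\mu_n(S)\to\nu^{G_\infty}(S)$, which is precisely the claimed limit.

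There is essentially no substantive obstacle at this step, since it is a purely formal combination of the two inputs; all the difficulty has been invested in the proof of Theorem \ref{Main1}. The only point worth emphasizing is that the uniformity (cocompactness) hypothesis on the $\Gamma_n$ is essential here: for a nonuniform lattice the spectrum of $L^2(\Gamma\bs G_\infty)$ carries a continuous part coming from Eisenstein series, the spectral side of the trace formula is more delicate, and [7S, Theorem 1.2] does not apply directly. The appropriate extension to the nonuniform setting requires additional analytic control on the Eisenstein contribution, which is why we restrict the statement of the corollary to the uniform case.
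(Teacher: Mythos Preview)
Your proposal is correct and matches the paper's own treatment: the paper does not give a separate proof but simply states that the corollary is an immediate consequence of Theorem~\ref{Main1} and \cite[Theorem~1.2]{7S}, which is exactly the combination you describe. Your additional remarks on why uniformity is needed are also consistent with the discussion the paper gives immediately after the statement.
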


Since $\SL_2(\CC)$ has no discrete series this implies that for a single representation $\pi$ we have $m(\pi,\Gamma_n)/\vol(\Gamma_n\bs\HH^3)\to 0$ (see \cite[Corollary 1.3]{7S}). In particular this result implies Corollary \ref{Betti_lim} on Betti numbers for uniform lattices. For the nonuniform case, rather than using an indirect argument as in the proof of above it would be more natural (and more susceptible of generalizing to other symmetric spaces, where the magic of hyperbolic Dehn surgery is not available) to give a proof using the trace formula. Such a proof was given in \cite{torsion2} for congruence covers of Bianchi orbifolds, but it needed an additional assumption on the asymptotic geometry, that the sum $\sum_j |\tau_j|^2$ over the cusps (see \eqref{def_shape} for the definition of $\tau$) be an $o(\vol)$. This condition is easily seen to be realized for Bianchi groups (I am grateful to Akshay Venkatesh for pointing this out to me) as can be established using an argument similar to that in Lemma \ref{cubic} below, and thus we have another proof of the Corollary. This proof is also the first step towards establishing limit multiplicities for the sequence of Bianchi groups; the missing ingredient for this more precise result is the control of intertwining operators at large eigenvalues (see \cite[Section 3.2]{torsion2}). 

There is a reverse result for the problem of limit multiplicities. In \cite{DRAFT_ABV} Mikl\'os Ab\'ert, Nicolas Bergeron and Bal\'int Vir\'ag extend the results of \cite{AGV} to the setting of IRSs in Lie groups to obtain the following result. Let $G_\infty$ is a simple Lie group ; if $Y$ is a Riemannian manifold let $\lambda_0(Y)$ denote the infimum of the spectrum of the Laplace operator on functions on $Y$. Then if $\nu$ is any IRS of $G_\infty$ supported on discrete subgroups and for $\nu$-almost all $\Lambda$ we have $\lambda_0(\Lambda \bs X) \ge \lambda_0(X)$ then $\nu$ is trivial. This implies that if a sequence of lattices $\Gamma _n \le \mathrm{PSL}_2(\CC)$ satisfies the condition sthat 
$$
\sum_{\substack{\lambda \in \sigma(\Delta_n) \\ \lambda \le 1}} \dim\ker(\Delta_n - \lambda) = o(\vol(M_n)
$$ 
(where $\Delta_n$ is the Laplacian of $M_n = \Gamma_n \bs \HH^3$) then the associated locally symmetric spaces BS-converge to $X$. In particular it follows that the Ramanujan conjecture for $\GL_2$ implies (a more general version of) our non-quantitative results. 


\subsection{Outline}

In Section \ref{recap} we recall the notion of Benjamini--Schramm convergence from \cite{7S} and make various general observations about it which were not included there. The next section \ref{geom} recalls (mostly) well-known results on the geometry of the groups $\SL_2(\so_F)$. We prove Theorem \ref{Main2} in Section \ref{simpletype}, and then Theorems \ref{conv_Bianchi} and \ref{Hilbert_conv} in Section \ref{main}. The end of the proof of Theorem \ref{Main1} is finally completed in the last section \ref{last} after recalling the description of maximal arithmetic lattices.


\section{Finite--volume orbifolds and Benjamini--Schramm convergence} \label{recap}

In this section we fix a real, noncompact, semisimple Lie group $G_\infty$ (we will also suppose for clarity that $G_\infty$ has no center), a maximal compact subgroup $K_\infty$ and let $X=G_\infty/K_\infty$ be the associated symmetric space. We will study the volume of the thin part of $X$-orbifolds of finite volume by decomposing it into cuspidal and compact components. We will also recall the notion of Benjamini--Schramm convergence from \cite{7S} and prove criteria for BS-convergence to the universal cover. These results were not in the original paper \cite{7S} since the focus there was on compact manifolds rather than general finite-volume orbifolds; this section should be seen as an appendix to this paper rather than a new development. We have however tried to be a little self-contained by recalling some important definitions from {\it loc. cit.}. 

\subsection{A recapitulation of Benjamini--Schramm convergence}
\label{primer}

As stated in the introduction, Benjamini--Schramm convergence of a sequence of $X$-orbifolds $M_n=\Gamma_n\bs X$ of finite volume towards $X$ means that $M_n$ converges almost everywhere in the Gromov--Hausdorff sense to $X$ as $n$ tends to infinity. This can be rephrased in more group--theoretic terms as follows: for any Chabauty neighbourhood\footnote{For the definition of the Chabauty topology we refer to the beginning of section 2 of \cite{7S}.} $W$ of the trivial subgroup $\Id$ of $G_\infty$ the proportion of $g\in G/\Gamma_n$ for which $g\Gamma_n g^{-1}$ intersects $W$ trivially tends to $1$. 

This allows us to define a more general notion of convergence for finite--volume $X$-orbifolds. Let $\sub_{G_\infty}$ be the compact space of closed subgroups of $G_\infty$ with its Chabauty topology; an invariant random subgroup (IRS) of $G_\infty$ is a probability measure on $\sub_{G_\infty}$ which is invariant under conjugation (this term was first coined in \cite{AGV}). Endowed with the topology of weak convergence the set ${\rm IRS}(G_\infty)$ of IRS of $G_\infty$ is compact. Moreover, there is a map from the isometry classes of finite-volume $X$-orbifolds to ${\rm IRS}(G_\infty)$ defined as follows: for any such orbifold $M$ we may choose a monodromy from  $\pi_1(M)$ to a lattice $\Gamma$ in $G_\infty$. To a lattice $\Gamma$ we may then associate the only $G_\infty$-invariant probability measure $\mu_\Gamma$ on $\sub_{G_\infty}$ which is supported on the conjugacy class of $\Gamma$; in other terms we take $\mu_\Gamma$ to be the pushforward of the $G_\infty$-invariant probability on $G_\infty/\Gamma$ by the map $g\gamma\mapsto g\Gamma g^{-1}$. This measure does not depend on the choice of the monodromy group $\Gamma$ and we say that a sequence $M_n$ is BS-convergent to an IRS $\mu$ if the sequence of IRS $\mu_{\Gamma_n}$ converges weakly to $\mu$. It is established in \cite[Lemma 3.5]{7S} that BS-convergence to the Dirac mass $\delta_{{\Id}}$ on the trivial subgroup is the same as BS-convergence to $X$ in the sense given in \eqref{BS}. This is intuitively clear: \eqref{BS} translates in group-theoretical terms to the statement that for almost all $g_n\in G/\Gamma_n$ the sequence $g_n\Gamma_n g_n^{-1}$ converges in Chabauty topology to the trivial group. 

Invariant random subgroups should be thought of as a generalization of lattices and their normal subgroups; there is a neat analogue of Borel's density theorem for IRS (\cite[Theorem 2.6]{7S}) which states that any nontrivial IRS of $G_\infty$ is supported on discrete, Zariski--dense subgroups of $G_\infty$. Recall that an isometry of $X$ is said to be elliptic if it has a fixed point in $X$, unipotent if it has a unique fixed point on the boundary at infinity and hyperbolic otherwise. Since a Zariski--dense subgroup must contain hyperbolic elements we get the following consequence:

\begin{lem}
Let $\mu\in{\rm IRS}(G_\infty)$ and suppose that there exists an open subset $U$ in $\sub_{G_\infty}$ such that any $\Lambda\in U$ does not contain an hyperbolic element of $G_\infty$ and $\mu(U)>0$. Then $\mu=\delta_{\Id}$. 
\label{cor_borel}
\end{lem}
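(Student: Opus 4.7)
The plan is a short argument by contradiction using the form of Borel's density theorem for IRS recalled in the paragraph just above the lemma: any nontrivial IRS of $G_\infty$, that is any $\nu\in\mathrm{IRS}(G_\infty)$ with $\nu\neq\delta_{\Id}$, is concentrated on discrete, Zariski-dense subgroups.

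First, I would assume for contradiction that $\mu\neq\delta_{\Id}$. Then by \cite[Theorem 2.6]{7S} there is a $\mu$-conull subset of $\sub_{G_\infty}$ consisting of discrete Zariski-dense subgroups of $G_\infty$. The excerpt also provides the other essential input: every Zariski-dense subgroup of $G_\infty$ must contain a hyperbolic element. The reason is that the locus of non-hyperbolic (elliptic or unipotent) elements is contained in a proper Zariski-closed subvariety of $G_\infty$, so the set of hyperbolic elements is Zariski-open and dense, and any Zariski-dense subgroup must meet any nonempty Zariski-open subset.

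Combining these two observations, the set $H\subset\sub_{G_\infty}$ of closed subgroups containing at least one hyperbolic element is $\mu$-conull. The hypothesis on $U$ amounts to $U\cap H=\emptyset$, so $\mu(U)\leq \mu(\sub_{G_\infty}\setminus H)=0$, which contradicts $\mu(U)>0$. Hence $\mu=\delta_{\Id}$, as required.

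The only delicate point is the characterization of Zariski-density in terms of the presence of hyperbolic elements; this is standard via the Jordan decomposition in semisimple Lie groups, since the regular $\RR$-split semisimple elements form a nonempty Zariski-open subset of $G_\infty$ and are hyperbolic in the sense defined in the introduction. Granted this, the lemma is essentially a one-line consequence of Borel density for IRS, so I do not expect a real obstacle to the proof.
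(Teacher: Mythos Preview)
Your argument is correct and is exactly the approach the paper takes: the lemma is stated there as an immediate consequence of the IRS version of Borel's density theorem together with the fact that a Zariski--dense subgroup must contain hyperbolic elements, and the paper does not write out a separate proof beyond that sentence.

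One small caveat on your justification of the auxiliary fact: it is not true that the non-hyperbolic locus sits inside a proper Zariski-closed subset (already in $\SL_2(\RR)$ the elliptic elements, $|\tr|<2$, form a nonempty real-open set), nor that the $\RR$-regular elements are Zariski-open. The statement that a Zariski--dense subgroup contains hyperbolic elements is nonetheless standard---in real rank one it follows from the elementary dichotomy for discrete groups, and in general from results of Benoist--Labourie or Prasad on $\RR$-regular elements---and the paper itself invokes it without proof, so your overall argument stands.
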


In more informal language this means that for an IRS $\mu\not=\delta_{\Id}$ a $\mu$-random subgroup must contain an hyperbolic element.


\subsection{A criterion for BS--convergence in rank one}

If $M$ is a closed Riemannian manifold with (strictly) negative sectional curvature and $R > 0$ we define $N_R(M)$ to be equal to the number\footnote{This number is well-known to be finite: in the Hausdorff topology the set of closed geodesics of length $\le R$ is compact, and the negative curvature forbids that it has an accumulation point.} of closed geodesics of length less than $R$ in $M$. 

\begin{prop}
Let $X$ be a rank-one irreducible symmetric space and $M_n$ be a sequence of finite-volume $X$-orbifolds. Suppose that there exists a $\delta>0$ such that for all $n$ the systole $\sys(M_n)$ is larger than $\delta$. Then $M_n$ is BS--convergent to $X$ if and only if for all $R>0$ we have 
$$
\lim_{n\to+\infty} \frac{N_R(M_n)}{\vol M_n} = 0.
$$
\label{criter}
\end{prop}

This allows to deduce from \cite[Theorem 1.11]{7S} the following corollary. 

\begin{corstar}
Let $G_\infty$ be a Lie group of real rank 1 and let $\Gamma$ be an nonuniform arithmetic lattice in $G_\infty$. If $\Gamma_n$ is a sequence of congruence subgroups of $\Gamma$ then the sequence $\Gamma_n\bs X$ is BS-convergent to $X$. 
\end{corstar}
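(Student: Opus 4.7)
The plan is to combine Proposition \ref{criter} with the conjugacy--class counting argument at the heart of \cite[Theorem 1.11]{7S}.

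First, I would verify the uniform systole hypothesis of Proposition \ref{criter}. Since $\Gamma_n\subseteq\Gamma$, every closed geodesic in $M_n=\Gamma_n\bs X$ corresponds to a hyperbolic element of $\Gamma_n$, which is in particular an element of $\Gamma$ with the same translation length in $X$. Consequently $\sys(M_n)\ge\sys(\Gamma\bs X)$, and the right-hand side is a fixed positive quantity: by discreteness of $\Gamma$ there are only finitely many $\Gamma$-conjugacy classes of elements of translation length at most $1$, and the Margulis lemma rules out an accumulation at $0$. With this systole bound in hand, Proposition \ref{criter} reduces the desired BS--convergence to the statement that
\[
\frac{N_R(M_n)}{\vol M_n}\xrightarrow[n\to\infty]{} 0\quad\text{for every fixed }R>0.
\]

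Next, I would pass to a torsion-free congruence subgroup of finite index in $\Gamma$ (using Selberg's lemma), which only rescales volumes by a bounded factor and hence does not affect the ratio above; so I may assume the $\Gamma_n$ are all torsion-free. Then $N_R(M_n)$ equals the number of $\Gamma_n$-conjugacy classes of hyperbolic elements of $\Gamma_n$ of translation length at most $R$. The main estimate underlying the proof of \cite[Theorem 1.11]{7S} bounds this quantity by $C(R)\,[\Gamma:\Gamma_n]^{1-\delta}$ for some $\delta>0$ independent of $n$; combined with $\vol M_n=[\Gamma:\Gamma_n]\vol(\Gamma\bs X)$ and with $[\Gamma:\Gamma_n]\to\infty$ (which holds along any sequence of pairwise distinct congruence subgroups), this yields an upper bound $O([\Gamma:\Gamma_n]^{-\delta})$, which tends to $0$ as required.

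The main obstacle is that \cite[Theorem 1.11]{7S} is written as a volume bound on the thin part of \emph{compact} arithmetic orbifolds, whereas in the nonuniform case the thin part is dominated by the cuspidal contributions (as made explicit in Proposition \ref{vol_thin}) and the thin-part statement itself is false. However, the non-compactness only enters the \emph{packaging} of that argument as a thin-part volume estimate; the actual adelic count of semisimple $\Gamma(\mathfrak n)$-conjugacy classes of bounded translation length is entirely local in nature and makes no use of the compactness of $\Gamma\bs X$. Isolating this counting statement from the proof of \cite[Theorem 1.11]{7S} is the only technical step, and it is essentially formal---this is the ``little effort'' alluded to in the introduction.
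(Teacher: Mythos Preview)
Your proposal is correct and follows exactly the route the paper intends: the paper simply states that the corollary is deduced from \cite[Theorem 1.11]{7S} via Proposition \ref{criter}, and you have spelled out the details---the uniform systole bound from $\Gamma_n\subset\Gamma$, the reduction to counting hyperbolic conjugacy classes, and the observation that the adelic fixed-point count in \cite{7S} is insensitive to compactness. The torsion-free reduction via Selberg is not strictly needed (the counting works for orbifolds), but it does no harm.
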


\begin{proof}[Proof of Proposition \ref{criter}]
For each $n$ fix a monodromy $\Gamma_n$ for $\pi_1(M_n)$ and let $\mu$ be any limit point of a subsequence of $\mu_{\Gamma_n}$. We will prove that $\mu$-random subgroups do not contain hyperbolic elements, from which it follows by the generalization of Borel's density theorem to IRS \cite[Theorem 2.6]{7S} (see Lemma \ref{cor_borel} above) that we must have $\mu=\delta_{\Id}$. 

Let $g_0\in G_\infty$ be any hyperbolic isometry, let $U$ be a relatively compact, open neighbourhood of $g_0$ in $G_\infty$ which contains only hyperbolic elements, and let $W_U\subset\sub_{G_\infty}$ be the set of closed subgroups of $G_\infty$ which contain at least one element in $U$; then $W_U$ is open in the Chabauty topology. We will see that for any $U$ as above we have
\begin{equation}
\lim_{n\to+\infty} \mu_{\Gamma_n}(W_U) = 0
\label{no_hyp_lim}
\end{equation}
whence it follows by a standard argument that for any $\mu$ as above we must have $\mu(W_U)=0$ for all $U$, which finally yields the claim that a $\mu$-random subgroup does not contain hyperbolic elements. 

To prove \eqref{no_hyp_lim} it suffices to show that there is a $R>0$ depending on $U$ such that $\mu_{\Gamma_n}(W_U)\ll (\vol M_n)^{-1}N_R(M_n)$, which we will now do. Let $x_0$ be the fixed point of $K_\infty$ in $X$ and 
$$
R = \sup_{h\in U} d_X(x_0,hx_0). 
$$
Now let $S_n$ be the compact subset of points in $M_n$ through which passes a closed curve of length $\le R$ which is homotopic to a closed geodesic in $M_n$. Then for all $x\in M_n \setminus S_n$ and $g$ such that $x = g\Gamma_n x_0$ we have that $(g\Gamma_n g^{-1})\cap U=\emptyset$ (for any $h\in g^{-1} \Gamma_n g$ the image in $M_n$ of the geodesic segment $[x_0,hx_0]$ is a closed curve homotopic to a closed geodesic and of length $\le R$), and it follows that 
$$
\mu_{\Gamma_n}(W_U)\le \frac{\vol S_n}{\vol M_n}.
$$
On the other hand by Lemma \ref{thin_flat} below we have that $\vol S_n\ll N_R(M_n)$, so \eqref{no_hyp_lim} follows from the hypothesis. 

The converse statement is \cite[Proposition 6.7]{7S} : we note that we do not use it in the paper. 
\end{proof}


\subsection{Estimating the volume of the thin part for products of hyperbolic spaces}
\label{est_thin}

In this subsection we restrict to  $G_\infty = \SL_2(\RR)^{r_1}\times\SL_2(\CC)^{r_2}$ (so that $X = (\HH^2)^{r_1} \times (\HH^3)^{r_2}$) and we let $\Gamma$ be an arithmetic irreducible lattice in $G_\infty$ (if $r_1+r_2 > 1$ then arithmeticity is a consequence of Margulis' arithmeticity theorem). Then either $\Gamma$ is co-compact or it is commensurable to $\SL_2(\so_F)$ for some number field $F$ of signature $(r_1,r_2)$. Here we will give a rough description of the $R$-thin part of $\Gamma\bs X$ in terms of the geometry of its compact flats and its cusps (the latter will be described in Section \ref{geom} in terms of number-theoretic quantities). 

\subsubsection{Geometry of flat manifolds}

For the contents of this section we refer the reader to Siegel's book \cite{Siegel}. If $\Lambda$ is a lattice in Euclidean space $\RR^r$ we denote by $\alpha_1(\Lambda) \le \ldots \le \alpha_r(\Lambda)$ the successive minima of $L$, by $\vol\Lambda$ the covolume of $\Lambda$ and we put 
\begin{equation}
\tau(\Lambda) = \frac{\vol\Lambda}{\alpha_1(\Lambda)^r} \asymp \frac{\alpha_2(\Lambda)\ldots\alpha_r(\Lambda)}{\alpha_1(\Lambda)^{r-1}}. 
\label{def_shape}
\end{equation}
The asymptotics on the right follow from Minkowski's second theorem and thus depend only on $r$; note that Mahler's criterion affirms that a set of unimodular lattices is bounded in $\SL_n(\RR)/\SL_n(\ZZ)$ if and only if $\tau$ is bounded on this set. For a compact Euclidean manifold $T=\Delta\bs\RR^r$ we define $\tau(T)=\tau(\Lambda)$ and also $\alpha_1(T) = \alpha_1(\Lambda)$ where $\Lambda$ is the translation subgroup of $\Delta$. 


\subsubsection{Neighbourhoods of compact flats}

If $g$ is a semisimple isometry of $X = (\HH^2)^{r_1} \times (\HH^3)^{r_2}$ there is a maximal subset in $X$ which is a union of flats on which $g$ acts as a Euclidean translation or trivially. We will denote it by $\Min(g)$, and by $\ell(g)$ the distance $d(x,gx)$ for $x\in\Min(g)$ (this is the minimal displacement of $g$, and $\Min(g)$ is the set on which it is attained---see \cite[II.6]{Bridson_Haefliger} for a discussion in a larger context). 

\begin{lem}\label{thin_flat}
There is a function $f:]0,+\infty[\to]0,+\infty[$ such that for any $R>0$ and any semisimple $g$ with $\ell(g)\le R$, the subset
$$
\{x\in X:\: d(x,gx)\le R\}
$$
is contained in the $f(R)$-neighbourhood of $\Min(g)$. 
\end{lem}

\begin{proof}
This is easily seen for a semisimple isometry of $\HH^2$ or $\HH^3$. The result on a product follows immediately (note that the isometry may be trivial in some factor, in which case $\Min(g)$ is not a flat subspace but a union of such). 
\end{proof}

\begin{lem}
If $\Gamma$ is an irreducible lattice in $G_\infty$ then $\Min(g)$ is a flat of $X$ for every semisimple $g\in\Gamma$. Moreover, if $\Gamma$ is co-compact, or if $\Gamma$ is commensurable to $\SL_2(\so_F)$ and $g$ does not fix a point in $\PP^1(F)$, then $\Min(g)$ projects to a compact subset in $\Gamma\bs X$. 
\label{max_flat}
\end{lem}

\begin{proof}
The first statement follows immediately from the fact that a non-trivial element in an irreducible lattice in $G_\infty$ cannot be trivial in any factor (since the lattice is arithmetic, hence obtained by restriction of scalars from a $F$-form $\G$ of $\SL_2$ for some number field $F$). 

If $g$ does not belong to a $F$-rational proper parabolic subgroup of $G_\infty$, then it is contained in a maximal torus $\T$ of $\G$ which is anisotropic over $F$. On the other hand, $\Min(g)$ is contained in the flat preserved by $\T$. By Godement's compactness criterion we know that 
$$
(\T(F \otimes_\QQ \RR)\cap\Gamma)\bs\T(F \otimes_\QQ \RR)
$$
is compact (see for example \cite[Proposition 10.16]{Raghunathan}) and it follows that $\Min(g)$ itself must map to a compact set in the quotient $\Gamma\bs X$. \end{proof}


\subsubsection{Cusps}

Here we suppose that $\Gamma$ is not uniform, so that it is a lattice in $G_\infty$ commensurable to $\SL_2(\so_F)$. The cusps of $\Gamma\bs X$ correspond to the points of the projective space $\PP^1(F)$. If $\xi \in \PP^1(F)$ the corresponding cusp is described as follows. Let $P$ be the Borel subgroup of $\SL_2(F)$ stabilising $\xi$ and $\Gamma_P = \Gamma \cap P$. Then the cusp associated to $\xi$ (or $P$) is $C = \Gamma_P \bs X$. 

Let $N$ be the unipotent subgroup of $P$ and $\Lambda = \Gamma \cap N$, viewed as a lattice in the Euclidean space $N$ and let $T = \Lambda \bs N$, a flat torus of dimension $r = r_1 + 2r_2$. Let $A$ be a maximal (split of rank $r_1 + r_2$) torus in $P$ with the following property: $\Gamma_P$ is generated by $\Lambda$ and $U = \Gamma \cap A$. Let $M$ be the $(r_1+r_2-1)$-dimensional subtorus of $A$ containing $U$, in other words 
\[
M = g \left\{ \left(\begin{pmatrix} a_1 & 0 \\ 0 & a_1^{-1} \end{pmatrix}, \ldots, \begin{pmatrix} a_{r_1+r_2} & 0 \\ 0 & a_{r_1+r_2}^{-1} \end{pmatrix}\right) : \prod_{i=1}^{r_1} |a_i| = 1 \right\}  g^{-1}
\]
for $g$ such that $g \infty = \xi$ and let $S$ be the compact manifold $U \bs M$. Then $C$ is topologically $]0, +\infty[ \times B$ where $B$ is a flat torus bundle with fiber $T$ over $S$, and the fibers over $\{ a \} \times S$ are isometric to $T$ scaled by a factor proportional to $a^{-1}$. The length element on $]0, +\infty[$ is $da/a$ and the metric on $S$ is constant along $a$ so the volume of the cusp truncated at $\eps$ is $\eps^{-r}\vol(T)\vol(S)$. 

We choose the parameter $a$ so that $\alpha_1(T) = 1$ and let $\eta \in \Lambda$ realise it (as an euclidean translation). Then the euclidean displacement of $\eta$ on the horosphere at height $a$ is equal to $a^{-1}$ and its dispacement in $X$ is $\asymp \log(1+a^{-1})$. We now fix $R>0$; we see that the part of $C$ where there is some unipotent element with a displacement less than $R$ is of volume $C(R) \vol(T)\vol(S)$ (where $C_R \asymp e^{rR}$, which we won't use in the sequel). To express this in terms of the conformal geometry of $T$ we use \eqref{def_shape}: we have $\vol(T) \asymp \tau(\Lambda)$. We record the conclusion of the discussion in the following lemma : let
\[
M_{\le R, \Lambda} = \{ x \in M : \exists \tilde x \text{ a lift of } x, \lambda \in \Lambda \setminus \{1\} : d(x, \lambda x) \le R \} 
\]
be the subset of $M$ where some non-trivial element of (the conjugacy class of) $\Lambda$ displaces less than $R$. 

\begin{lem}
Let $R > 0$. Then
\[
\vol(M_{\le R, \Lambda}) \ll_R \tau(\Lambda) \vol(S)
\]
for any maximal unipotent subgroup $\Lambda$ where $S$ is the flat $(r_1+r_2-1)$-torus associated to $\Lambda$ as above. 
\label{thin_cusp}
\end{lem}

We still have to estimate the volume of the chunk of the thin part of $\Gamma \bs X$ coming from non-unipotent elements of $\Gamma_P$. For this we define, for $\zeta \in \PP^1(F), \zeta \not= \xi$, a subgroup $U_\zeta = \stab_{\Gamma_P}(\zeta)$. Then $\Gamma_P$ is the disjoint union of $\Lambda$ and all $U_\zeta - \{1\}$. There is a unique maximal flat $F_\zeta$ stabilised by $U_\zeta$, foliated by hyperplanes on which it acts cocompactly with quotient $S_{\xi, \zeta}$. 

Now we fix $R > 0$ and we suppose that the injectivity radius of $S_{\xi, \zeta}$ is $\le R$ (there are only finitely many $\zeta$ with this property). By Lemma \ref{thin_flat} the part of $X$ where some non-trivial element of $U_\zeta$ displaces of less than $R$ is an $f(R)$-neighbourhood of $F_\zeta$. We do not need to estimate the volume of its image in $\Gamma \bs X$, only that of the part where there in no unipotent element which displaces less than all non-trivial elements of $U_\zeta$. Let :
\[
M_{\le R, \xi}^{\mathrm{nu}} = \{ x \in M : \text{ for all unipotent } \lambda \in \Gamma : d(\tilde x, \lambda \tilde x) > R \text{ and } \exists \gamma \in \Gamma_P \setminus \Lambda : d(\tilde x, \gamma \tilde x) \le R \}. 
\]
the subset of $M \setminus \bigcup_{\Lambda'} M_{\le R, \Lambda'}$ where some element of $\Gamma_P \setminus \Lambda$ displaces less than $R$. Projecting that to $F_\zeta$ we find a subset bounded by two parallel hyperplanes: we let $\ell(\zeta)$ be the distance between these two hyperplanes. Then the volume we want to estimate is bounded by $\ell(\zeta) \vol(S_{\xi, \zeta})$ and we get the following lemma.  

\begin{lem} \label{thin_Levi}
Let $R > 0$. Then 
$$
\vol(M_{\le R, \xi}^{\mathrm{nu}}) \ll \sum_{\substack{\zeta \in \PP^1(F), \, \zeta\not=\xi \\ \inj(S_{\xi, \zeta}) \le R}} \ell(\zeta)\vol(S_{\xi, \zeta}). 
$$
for all $\xi \in \PP^1(F)$, with a constant depending only on $R$. 
\end{lem}


\subsubsection{Conclusion}

Putting together all lemmas proven in this subsection we get the following result. 

\begin{prop}
Fix $\delta,R>0$; then for any irreducible arithmetic non-uniform lattice $\Gamma$ in $G_\infty$, such that $M = \Gamma\bs X$ has a systole $\sys(M)\ge\delta $, we have: 
\[
\vol M_{\le R} \ll \sum_{T:\alpha_1(T)\le R} \vol T + \sum_{j=1}^h \vol(S_j)\tau(\Lambda_j) + \sum_{j=1}^h \sum_{\substack{\zeta \in \PP^1(F), \, \zeta\not=\xi_j \\ \inj(S_{\xi_j, \zeta}) \le R}} \ell(\zeta)\vol (S_{\xi_j, \zeta}) + \sum_{j=1}^e q_j \vol M[\gamma_j]
\]
where the constant depends only on $R,\delta,X$. The first sum on the right is over top-dimensional compact flats $T$ of $M$, $q_1,\ldots,q_j$ are the orders of $\gamma_1,\ldots,\gamma_e$ where the $\gamma_j$ are generators for a set of representatives for the conjugacy classes of maximal finite cyclic subgroups in $\Gamma$, and $M[\gamma_j]$ is the image in $M$ of the fixed flat of $\gamma_j$. Finally, $\N_1,\ldots,\N_h$ are representatives for the $\Gamma$-conjugacy classes of maximal unipotent subgroups of $\SL_2(F)$, $\Lambda_j = \Gamma \cap \N_j(\RR)$, $\xi_j$ is the fixed point in $\PP^1(F)$ of $\N_j$ and the $S_j$ are compact quotients of the Levi subgroups of the parabolics associated with the $\Lambda_j$. 

In the case where $M$ is compact we have the same result, without the second and third term in the sum above. 
\label{vol_thin}
\end{prop}

\begin{proof}
If $g\in\Gamma$ is a semisimple element of infinite order with $\ell(g)\le R$, then by Lemmas \ref{thin_flat} and \ref{max_flat} the image in $M$ of the region of $X$ where $g$ displaces by distance less than $R$ is contained in an $f(R)$-neighbourhood of a compact flat $T$. The volume of the former is bounded by $C(R)\vol(T)$ where $C(R)$ depends only on $X$ and $R$ (for example it can be taken to be the volume of a $R$-ball in $X$). Hence the first term; the last one is obtained in the same way. The second and third terms follow immediately from Lemmas \ref{thin_cusp} and \ref{thin_Levi}. 
\end{proof}


\section{Proof of Theorem \ref{Main2}} \label{simpletype}

\subsection{Closed geodesics in arithmetic orbifolds}

Here we briefly explain how to describe the lengths of closed geodesics in hyperbolic arithmetic three--orbifolds (up to multiplication by a rational) and prove a finiteness result. We will suppose the reader familiar with the description of arithmetic lattices in $\SL_2(\CC)$ (which can be found for example in \cite{MR}). 

If $\Gamma$ is any arithmetic subgroup of $\SL_2(\CC)$ defined over a number field $F$ it is well--known that we can relate the lengths of closed geodesics in $\Gamma\bs\HH^3$ to the norms of units in quadratic extensions of $F$. More precisely: $\Gamma$ is commensurable to an arithmetic subgroup of $\G(F)$ for some quaternion algebra $A$ over $F$ and $\G=\SL_1(A)$. 

We will now state a general result on translation lengths on products of hyperbolic planes and spaces, to be re-used later (see also \cite[12.3]{MR}). We use $m$ to denote the logarithmic Mahler measure defined for $a\in\ovl\QQ$ by 
\begin{equation} \label{mahler}
m(a)=\sum_{\sigma}\max(0,\log|a^\sigma|)
\end{equation}
where the sum runs over all conjugates of $a$ in $\ovl\QQ$. Let $\gamma\in\G(F)$ have eigenvalues $\lambda^{\pm 1}$ and let $\sigma: F \to \CC$ is a real (resp. imaginary) embedding of $F$. There is a unique extension of $\sigma$ to $F(\lambda)$ such that $|\lambda^\sigma| > 1$, and the minimal displacement of $\gamma^{\sigma} \in \SL_2(\RR)$ (resp. $\SL_2(\CC)$) on $\HH^2$ (resp. $\HH^3$) is then equal to $\log|\lambda^\sigma|$. Note that the formula is also valid in case $A$ ramifies at $\sigma$ since it gives 0 in this case. So in the end we get that the double of the minimal displacement on the relevant symmetric space is equal to $\sum_{\sigma} \delta\max(0, \log|\lambda^\sigma|)$ where the sum is over all conjugates of $\lambda$ in $\CC$ and $\delta=2$ if $\lambda^\sigma \in \RR$ and $1$ otherwise. Let $r_1^\ram$ be the number of real places where $A$ ramifies and $r_1'=r_1-r_1^\ram$ and $\ell(\gamma)$ the minimal displacement of $\gamma$ on $(\HH^2)^{r_1'} \times (\HH^3)^{r_2}$. By the Cauchy-Schwarz inequality we finally obtain 
\begin{equation} \label{Mahler_length}
\frac { m(\lambda)} {\sqrt{r_2 + r_1'}} \le \ell(\gamma) \le 2m(\lambda). 
\end{equation}

Now we suppose that $F$ has exactly one complex place and $A$ ramifies at all real places. We may suppose that $F\subset\CC$ such that $\RR F = \CC$, and we will view $\Gamma$ as a subgroup of $\SL_1(A\otimes\CC)\cong\SL_2(\CC)$. Let $\tr$ be the reduced trace of $A$; any $\gamma\in\Gamma$ has integral trace $t=\tr(\gamma)\in\so_F$, and if it is moreover a semisimple element it can be diagonalized over $E=F(\sqrt d),\, d=t^2-4$, with eigenvalues $\eps^{\pm 1}$ where $\eps$ is a unit of $E$ such that $N_{E/F}(\eps)=1$. Note that this kernel is an abelian group of rank 1 since $E/F$ is inert at all infinite places but one. The proof for the following lemma follows an argument of N. Elkies \cite{MO}. 

\begin{lem} \label{nb_fini}
Fix an integer $r\ge 1$ and a real $R>0$; there exists a finite set $\{t_1, \ldots, t_m\}$ of algebraic integers (depending only on $R$ and the degree $r$) such that the following holds: for any number field $F$ of degree $[F:\QQ]\le r$, any quaternion algebra $A/F$ (split at an infinite place) and any hyperbolic $\gamma\in A^1$ with integral trace, of minimal displacement $\le R$ (on the symmetric space associated to $(A\otimes_\QQ \RR)^1$) there is $i \in \{1, \ldots, m\}$ such that $\tr\gamma = t_i$.   
\end{lem}

\begin{proof}
The set $T_R = \{u\in\ovl\ZZ^\times:\: [\QQ(u):\QQ]\le r,\, m(u)\le R\}$ is finite for all $R>0$, as the coefficients of the minimal polynomial of such an $u$ are bounded by polynomials in $\exp(m(u))$. By \eqref{Mahler_length} (in this case the lefmost term there is simply $m(\lambda)$), the eigenvalues of a $\gamma$ as in the statement must be some $\eps_i$ in $T_R$. Putting $T_R = \{\eps_1, \ldots, \eps_m\}$ and setting $t_i = \eps_i + \eps_i^{-1}$ we get the result. 
\end{proof}


\subsection{Convergence of orbifolds of the simplest type}

We prove here Theorem \ref{Main2}. The ingredients are only Lemma \ref{nb_fini} and the following result. 

\begin{lem}
Suppose that $\Gamma_n$ is a sequence of finite-covolume Kleinian groups such that for any $R>0$, for large enough $n$ any hyperbolic element in $\Gamma_n$ with minimal displacement less than $R$ has real trace. Then the sequence of orbifolds $\Gamma_n\bs\HH^3$ is BS-convergent to $\HH^3$. 
\label{conv_fuch}
\end{lem}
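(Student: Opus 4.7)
The plan is to use Benjamini--Schramm compactness of the space of invariant random subgroups of $G_\infty=\PSL_2(\CC)$ together with the IRS-version of Borel density from \cite{7S}. By \cite[Lemma 5.1]{7S} it suffices to prove that every weak accumulation point $\mu$ of the sequence $(\mu_{\Gamma_n})$ in $\mathrm{IRS}(G_\infty)$ equals $\delta_{\{\Id\}}$.

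First I would reproduce the covering argument from the proof of Proposition~\ref{criter}. Fix any relatively compact open set $U\subset G_\infty$ consisting of hyperbolic elements of non-real trace, and put $R=\sup_{h\in U}d_X(x_0,hx_0)$. If $g\in G_\infty$ and $\gamma\in\Gamma_n$ satisfy $g\gamma g^{-1}\in U$, conjugation-invariance of the trace and of the minimal displacement yield $\tr\gamma\notin\RR$ and $\ell(\gamma)\le R$; by hypothesis no such $\gamma$ exists for $n$ large enough, so $\mu_{\Gamma_n}(W_U)=0$ eventually, and hence $\mu(W_U)=0$ by the Portmanteau inequality for open sets. Covering the open set of hyperbolic elements of non-real trace in $G_\infty$ by countably many such $U$, I conclude that $\mu$-almost every $\Lambda$ contains no hyperbolic element of non-real trace.

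Suppose now, for contradiction, that $\mu\ne\delta_{\{\Id\}}$. By \cite[Theorem 2.6]{7S}, $\mu$ is supported on discrete Zariski-dense subgroups of $G_\infty$. Such a $\Lambda$ is non-elementary, hence contains hyperbolic elements, all of which have real trace by the previous paragraph; combined with the fact that elliptic and parabolic elements of $\PSL_2(\CC)$ automatically have traces in $[-2,2]\subset\RR$, this means that every element of $\Lambda$ has real trace. A classical trace-field argument (Bass's theorem, or equivalently the characterization of Fuchsian subgroups of $\PSL_2(\CC)$ through the reality of their trace field) then gives that $\Lambda$ is conjugate into $\PSL_2(\RR)$, and therefore preserves a unique totally geodesic $2$-plane $P_\Lambda\subset\HH^3$.

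The assignment $\Lambda\mapsto P_\Lambda$ is $G_\infty$-equivariant and measurable, so its pushforward under $\mu$ is a $G_\infty$-invariant probability measure on the homogeneous space $\mathcal P$ of totally geodesic $2$-planes in $\HH^3$. But $\mathcal P\cong G_\infty/\stab(P_0)$ is non-compact and $\stab(P_0)\cong\mathrm{PO}(2,1)$ is not a lattice in $G_\infty$, so such an invariant probability measure cannot exist; the contradiction forces $\mu=\delta_{\{\Id\}}$, i.e.\ $M_n$ BS-converges to $\HH^3$. The step I expect to be most delicate is the trace-field characterization of Fuchsian subgroups, which I would invoke as a black box; a minor technical point is to verify that the $\mu$-random subgroup is non-elementary (so that $P_\Lambda$ is uniquely defined), but this follows from Zariski density.
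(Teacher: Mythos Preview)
Your argument is correct and follows essentially the same route as the paper's: show that any weak IRS limit $\mu$ is supported on subgroups all of whose elements have real trace (hence Fuchsian), and contradict the IRS Borel density theorem \cite[Theorem~2.6]{7S}. One small redundancy: once you know that $\mu$-almost every $\Lambda$ is both Zariski-dense in $\PSL_2(\CC)$ and conjugate into $\PSL_2(\RR)$ you already have a contradiction (the latter is a proper real-algebraic subgroup), so the equivariant-map argument to the space of planes in your final paragraph, while valid, is not needed---the paper concludes in one line at that point.
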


\begin{proof}
  To prove this it suffices to prove that any limit of a subsequence of $\mu_{\Gamma_n}$ is equal to the trivial IRS $\delta_{\Id}$. We will show that any such limit must be supported on non Zariski-dense subgroups of $\PSL_2(\CC)$, which forces it to be trivial by `Borel's density theorem' \cite[Theorem 2.6]{7S}. 

  The proof of this claim is similar to that of Proposition \ref{criter}: if $U$ is an open, relatively compact subset of $(\CC\setminus\RR)/\{\pm 1\}$ let $W_U$ be the open subset of $\sub_G$ ($G=\PSL_2(\CC)$) of subgroups which contain an element having its trace in $U$. We can choose a countable set $\mathcal U$ of such $U$s so that $\bigcup_{U \in \mathcal U} U = (\CC \setminus \RR)/\{\pm 1\}$. Then the complement of $\bigcup_{U \in \mathcal U} W_U$ in $\sub_G$ contains only non-Zariski-dense subgroups. Indeed, the condition $\pm\tr(g) = \pm\overline{\tr(g)}$ describes a Zariski-closed subset in $\PSL_2(\CC)$, so every Zariski-dense subgroup of $G$ must contain an element with trace in $(\CC \setminus \RR)/\{\pm 1\}$ and so belongs to $W_U$ for some $U \in \mathcal U$. 

  Thus it suffices to prove that for every $U \in \mathcal U$ we have: 
  \[
  \lim_{n\to+\infty} \mu_{\Gamma_n}(W_U) = 0
  \]
  (from which the result follows by the $\sigma$-additivity of any limit measure). But since for any given $U$, there is a $R>0$ so that $W_U$ is contained in $\{z: |z|\le R\}$ we have in fact that $\mu_{\Gamma_n}(W_U)=0$ for large enough $n$. 
\end{proof}

We now check the condition of Lemma \ref{conv_fuch} for the sequence $\Gamma_n$ in the statement of Theorem \ref{Main2}. Fix $R>0$, and let $t_1,\ldots,t_m$ be the finite set given by Lemma \ref{nb_fini}, ordered so that the totally real ones are exactly $t_1, \ldots, t_l$. Then for any $l < i \le m$ we have $t_i \not\in F_n$ for $n$ large enough: indeed, since $t_i$ is not totally real we have $t_i\in F_n \Rightarrow F_n = B_n(t_i)$ but the relative discriminants $D_{B_n(t_i)/B_n}$ for $i = 1, \ldots, m$ are bounded so we must have $t_i\not\in F_n$ for large $n$ since $D_{F_n/B_n}$ is by hypothesis unbounded. 

Thus, for $n$ large enough, the traces of the elements in $\Gamma_n$ of displacement less than $R$ are among those of the $t_i$ which are totally real numbers. 


\section{Geometry of $M_F$} \label{geom}

Recall from the introduction that if $F$ is a number field with signature $(r_1,r_2)$ we denote by $M_F$ the finite-volume orbifold 
$$
M_F = \PSL_2(\so_F)\bs(\HH^2)^{r_1}\times(\HH^3)^{r_2}. 
$$
This section is preliminary to the proof of our main results; we record various known results about the global geometry of the orbifolds $M_F$. We will also set notation as we go along. We begin by recalling the volume formula for $M_F$ given in \cite{Borel_volumes}:
\begin{equation}
\vol M_F = \frac{2\zeta_F(2)}{2^{3r_2} \pi^{r_1+2r_2}} D_F^{\frac 3 2}.
\label{vol_total}
\end{equation}

\subsection{Reduction theory for $\SL_2$}

We describe here the well--known reduction theory for the groups $\Gamma_F = \SL_2(\so_F)$ in a manner suitable to the use we will make of it later. We will work with the adelic version of reduction theory, which is much better suited to the study of manifolds which have more than one cusp. For this subsection we fix a number field $F$ with $r_1$ real places and $r_2$ complex ones, and let $r=r_1+2r_2$ be its absolute degree. We will denote by $V_f,V_\infty$ the set of finite and infinite places of $F$. For each $v$ either finite of infinite we denote by $|\cdot|_v$ the absolute value at $v$ and by $F_v$ the metric completion of $F$ for $|\cdot|_v$. We will use $\Ade_f$ to signify the ring of finite ad\`eles of $F$, and denote $F_\infty=\prod_{v\in V_\infty} F_v$ and $\Ade=F_\infty\times\Ade_f$ the complete ring of ad\`eles. If $v$ is a finite place of $F$ we let $K_v = \SL_2(\so_v)$, and we denote by $K_f$ the closure of $\Gamma_F$ in $\SL_2(\Ade_f)$ (which equals $\prod_{v\in V_f} K_v$). For each $v\in V_\infty$ we choose the maximal compact subgroup $K_v$ of $\SL_2(F_v)$ to be $\SO(2)$ if $F_v=\RR$ and $\SU(2)$ if $F_v\cong\CC$. The product $K = K_f\times\prod_{v\in V_\infty} K_v$ is then a compact subgroup of $\SL_2(\Ade)$. 

\subsubsection{Minkowski--Hermite reduction}

Let $\B$ be the standard Borel subgroup (upper triangular matrices) over $F$ for $\SL_2$, $\N$ its unipotent subgroup and $\T$ the maximal $F$-split torus of diagonal matrices. Let $h_F$ be the class-number of $F$ and $\frc_1=\so_F,\frc_2,\ldots,\frc_{h_F}$ a set of representatives for the class-group of $F$. For each $i=1,\ldots,h_F$ we choose $a_i$ to be an id\`ele of (id\'elic) norm $1$ such that for all $v\in V_f$ we have $a_i\so_v=\frc_i\so_v$; we will (abusively) also denote by $a_i$ the matrix $\begin{pmatrix} a_i & 0 \\ 0 & a_i^{-1}\end{pmatrix}\in\SL_2(\Ade)$. Let $T_\infty=\T(F_\infty)$ and for any $c>1$ define
$$
T_\infty(c) = \left\{ \begin{pmatrix} t & 0 \\ 0 & t^{-1}\end{pmatrix}\in T_\infty:\:  |t|_\infty \ge c \right\},
$$
where we put $|t|_\infty=\prod_{v\in V_\infty} |t_v|_v$. The following result is then a straighforward consequence of classical reduction theory (see for example \cite{Godement_bourbaki}): for small enough $c_F$ we have\footnote{To make the sets on the right-hand side of finite volume one usually replace $\N(\Ade)$ by a compact subset $U$ such that $\N(\Ade)=\N(F)U$; we will not need to do so here.}: 
\begin{equation}
\SL_2(\Ade) = \bigcup_{i=1}^{h_F} \SL_2(F)\cdot \N(\Ade) T_\infty(c_F) a_i K.
\label{reduction0}
\end{equation}
There is an estimate for $c_F$ for number fields due to S. Ohno and T. Watanabe \cite{Ohno_Watanabe}: they prove that 
\begin{equation}
\gamma(F):= \sup_{g\in\GL_2(\Ade)} \inf_{v\in F^2} \left( \frac{\| gv\|^{\frac 2 r}}{|\det g|^{\frac 1 r}} \right) \le C D_F^{\frac 1 r}
\label{boundheight}
\end{equation}
where $\|\cdot\|$ is a norm on $\Ade^2$ given at each place $v$ by a norm preserved by $K_v$ and normalized so that $\|e_1\|=1$. It is easy to see that we can take $c_F\ge\gamma(F)^{-\frac r 2}$ in \eqref{reduction0}. 

Using strong approximation for $\G$ it is possible to replace each $a_i$ in \eqref{reduction0} by an element of $\SL_2(\Ade)$ which is equal to some $\gamma_i\in\SL_2(F)$ at all finite places. We will give an explicit choice for these $\gamma_i$ (which will actually circumvent the use of the strong approximation property of $\G$). For $\xi\in F$ we put
\begin{equation}
\gamma_\xi = \begin{pmatrix} 0&1\\-1&\xi\end{pmatrix}.
\label{gammaxi}
\end{equation}
Let $v\in V_f$; if $v(\xi)\le 1$ then $\gamma_\xi\in K_v$; if $v(\xi)>1$ then one can easily compute the Iwasawa decomposition of $\gamma_\xi$ at $v$:
\begin{equation}
\gamma_\xi =  \begin{pmatrix} \xi^{-1} & 1 \\ 0 & \xi\end{pmatrix} \begin{pmatrix}  1 & 0 \\-\xi^{-1} & 1 \end{pmatrix}.
\label{iwasawa}
\end{equation}
Fix $i=1,\ldots,h_F$; choose $\alpha,\beta\in\so_F$ such that\footnote{This is possible since a=every ideal of $\so_F$ is generated by two elements, hence if we take $\alpha,\beta$ to generate a representative for the inverse class of $\frc_i$ we get that the integral ideal $(\beta)/(\alpha,\beta)$ is a representative of $\frc_i$.} $\frc_i=(\beta)/(\alpha,\beta)$ and let $\xi=\alpha/\beta$ then it follows from \eqref{iwasawa} that
$$
\N(\Ade_f)a_iK_f=\N(\Ade_f)\gamma_\xi K_f. 
$$
Putting $\gamma_i=\gamma_\xi$ we thus get from \eqref{reduction0}
\begin{align*}
\SL_2(\Ade) &= \bigcup_{i=1}^{h_F} \SL_2(F)\cdot \N(\Ade) T_\infty(c_F) (a_i)_\infty K_\infty (a_i)_fK_f \\
        &= \bigcup_{i=1}^{h_F} \SL_2(F)\cdot \N(\Ade) T_\infty(|a_i|_\infty c_F)K_\infty \gamma_i K_f,
\end{align*}
as we have $|a_i|_\infty=|\frc_i|$ we can write the last equality as:
\begin{equation}
\SL_2(\Ade) = \bigcup_{i=1}^{h_F} \SL_2(F) \N(\Ade) T_\infty(c_i) K_\infty (\gamma_i K_f)
\label{reduction1}
\end{equation}
where $c_i=|\frc_i|\cdot c_F$. 


\subsubsection{Height functions}
\label{height}

Let $\alpha$ be the simple root of $\T$ defined over $F$ given by 
$$
\alpha \begin{pmatrix} t & 0 \\ 0 & t^{-1} \end{pmatrix}  = t^2. 
$$ 
We extend it as a character of $\B$ trivial on $\N$, and we define a function on $\SL_2(\Ade)$ by $\alpha(bk) = \alpha(b)$. Then the height function $y_\Ade$ on $\SL_2(\Ade)$ is defined by $y_\Ade(g)=\max_{\gamma\in\SL_2(F)}|\alpha(\gamma g)|$. We will identify this adelic function with height functions on the symmetric space $X$ in the sense of \cite[2.1,2.2]{torsion1}. 

First we need to fix once and for all a model for $X$: for $v\in V_\infty$ we identify $X_v=\SL_2(F_v)/K_v$ with $F_v\times]0,+\infty[$ using the Iwasawa decomposition in the usual manner where $(0,1)$ is the fixed point of $K_v$, so that $X=G_\infty/K_\infty=\prod_{v\in V_\infty}$ is identified with a product $F_\infty\times]0,+\infty[^r$ of half-planes and -spaces. We define a height function $y$ on $X$ as follows: for $(z,y)\in F_\infty\times]0,+\infty[^r$ put $y_\infty(z,y) = \prod_{v\in V_\infty} |y_v|_v$, and for $\xi\in \PP^1(F)$ choose a $\gamma\in\SL_2(F)$ such that $\gamma\cdot\xi=\infty$ and an Iwasawa decomposition $\gamma=b_f k_f$ with $k_f\in K_f,b_f\in\B(\Ade_f)$ (see \cite[Proposition 4.5.2]{Bump}), and put
\begin{equation}
y_\xi(x) = |\alpha(b_f)|_f  \cdot y_\infty(\gamma x)
\label{def_height}
\end{equation}
(which does not depend on the particular $\gamma$ chosen). Then $y(x)=\max_{\xi\in\PP^1(F)} y_\xi(x)$ is a $\Gamma_F$-invariant height function, and if $\pi$ is the natural isomorphism $\SL_2(F)\bs\SL_2(\Ade)/K\to \Gamma_D\bs\HH^3$ we have $y_\Ade=y\circ\pi$. 

We record that if $\xi\in F$ and $\gamma_\xi$ is defined as in \eqref{gammaxi} then \eqref{iwasawa} yields:
\begin{equation}
y_\xi(x) = |\frb|^{-1} y_\infty(\gamma_\xi x)
\label{calcy}
\end{equation}
where $\frb$ is the ideal $(\beta)/(\alpha,\beta)$ if $\xi=\alpha/\beta$. 


\subsubsection{A fundamental set in classical setting}

From the decomposition \eqref{reduction1} one can deduce a fundamental set for $\Gamma_F$ in $X$: let $x_0$ be the fixed point of $K_\infty$ and for $Y>0$ let 
$$
B_\infty(Y) = \{x \in X :\: y_\infty(x) \ge Y\}
$$
be the horoball about infinity of height $Y$; one has $B_\infty(Y)=T_\infty(Y)\N(F_\infty)\cdot x_0$. Thus it follows from \eqref{reduction1} that 
\begin{equation}
X = \Gamma_F \bigcup_{i=1}^{h_F} \gamma_i^{-1} B_\infty(Y_i).
\label{reduction2}
\end{equation}
where for each $i=1,\ldots,h_F$ we put $Y_i=c_i=c_F|\frc_i|$. 


\subsection{Cusps} \label{cusps}

In this subsection we make the discussion of cusps in section \ref{est_thin} more explicit. 

\subsubsection{Cusp stabilizers}

Let $\xi \in \PP^1(F) = F \cup \{\infty\}$ and let $P_\xi$ its stabiliser in $G_\infty = \SL_2(\RR)^{r_1} \times \SL_2(\CC)^{r_2}$. Let $\Gamma_\xi = \Gamma \cap P_\xi$; if $\xi =\infty$ then we have 
\begin{equation} \label{stab_infty}
\Gamma_\infty = \left\{ \begin{pmatrix} t & z\\ & t^{-1} \end{pmatrix}:\: t\in\so_F^\times, z\in\so_F \right\}. 
\end{equation}
For $\xi=\frac\alpha\beta \in F$ let as before $\frb$ be the ideal $(\beta)/(\alpha,\beta)$. Then an easy computation yields
\begin{equation} \label{stab_xi}
\Gamma_\xi = \left\{ \begin{pmatrix} t^{-1} - z\xi & z\xi^2 + (t - t^{-1})\xi \\ -z & t + z\xi \end{pmatrix} :\: t \in \so_F^\times, z \in \frb, z\xi + (t - t^{-1}) \in \frb \right\}. 
\end{equation}


\subsubsection{Unipotent part}

Let $N_\xi$ be the unipotent radical of $P_\xi$: in this subsection we study the shape of the flat tori $T_\xi = (N_\xi \cap \Gamma) \bs N_\xi$. The unipotent subgroup of $\Gamma_\infty$ is $1 + \so_F X$ where $X = \begin{pmatrix} 0&1\\0&0 \end{pmatrix}$. It follows that $T_\infty$ is isometric to $\so_F \bs F_\infty$. From \eqref{stab_xi} we compute that the unipotent part of $\Gamma_\xi$ is:
\begin{equation}
\Lambda_\xi = \left\{ 1+\begin{pmatrix} -z\xi & z\xi^2 \\ -z & z\xi \end{pmatrix}:\: z\in\frb^2 \right\}
\label{unip_xi}
\end{equation}
and it follows that $T_\xi$ is isometric to $(\frb^2\bs F_\infty)$.

There is a crude estimate for the shape of the lattice in $F_\infty$ associated to an ideal of $\so_F$. Let $\fra$ be an ideal and $a \in \fra$ such that $\| a \| = \alpha_1(\fra)$. Then $\fra$ contains the finite-index lattice $a\so_F$ which has the same $\alpha_1$, and it follows that $\tau(\fra) \le \tau(a\so_F) = \tau (\so_F) = D_F^{1/2}$. So we get: 
\begin{equation}
\tau(T_\xi) = \tau(\frb^2) \le D_F^{1/2}. 
\label{shape}
\end{equation}

We will also need (because of the term $\ell(u)$ in Proposition \ref{vol_thin}) an estimate of the displacement of elements in $\Lambda_\xi$ at a given height. It follows easily from the explicit description \eqref{unip_xi} and Minkowski's first theorem implies that on the horosphere $H = \{y_\xi = 1\}$, we have $\alpha_1(\Lambda_\xi\bs H) \ll |\frb|$ with an absolute constant. 


\subsubsection{Levi component}

Here we describe the geometry of the flat tori $S_\xi$ (which may be called the ``Levi components'' of the cusps, since they come from the Levi components of the $P_\xi$). We will use the usual notation $\Log$ for the map $\so_F^\times\to\RR^{r_1 + r_2 - 1}$ defined by $u \mapsto (\log |u|_v)_{v\in V_\infty}$, where we have fixed an isometry $\RR^{r_1+r_2-1}\cong\{(x_v)_{v\in V_\infty} \in \RR^{V_\infty} :\: \sum_v x_v=0\}$, and $R_F$ for the regulator, i.e. the covolume of $\Log\so_F^\times$. 

We will work at all places of $F$ and hence need to describe things using $F$-algebraic groups. We fix $\xi \in \PP^1(F)$ and let $\B_\xi$ be the stabiliser of $\xi$ in $\SL_2/F$, and $\N_\xi$ its unipotent radical. Recall that for any finite place $v$ of $F$ we have defined $K_v = \SL_2(\so_v)$. In the sequel we will make use of the Bruhat-Tits tree of $\SL_2(F_v)$ which we will denote by $X_v$. 

\begin{lem} \label{good_torus}
There exists an $F$-split torus $\T_\xi$ in $\B_\xi$ such that for every finite place $v$, $\T_\xi(F_v) \cap K_v$ is a maximal compact subgroup in $\T_\xi(F_v)$.  
\end{lem}

\begin{proof}
Let $x_v$ be the fixed point of $K_v$ in $X_v$. It suffices to show that there exists a $\zeta\in F$ such that for each $v\in V_f$ the geodesic $]\xi,\zeta[$ determined by $\xi,\zeta$ in $X_v$ goes through $x_v$; starting from any $\zeta_0\in F$ we can move it at each $v$ by an $n_v\in\N(F_v)$ so that $x_v\in]\xi,n_v\zeta_0[$; strong approximation for $\N$ tells us that there is a $n\in\N(F)$ such that $n_v\N(\so_v)=n\N(\so_v)$ for all $v$, and since $\N(\so_v)$ fixes $x_v$ it follows that $\zeta=n\zeta_0$ yields what we need.
\end{proof}

We have an identification of $\T_\xi(F) \cap \Gamma_\xi$ with the group of units $\so_F^\times$, so that $S_{\xi, \zeta_0}$ is isometric to $\Log(\so_F^\times) \bs \RR^{r_1+r_2-1}$. The following lemma gives a similar description for the other $S_{\xi, \zeta}$, where we denote by $\so_F^\times(\fri)$ the congruence subgroup $(1 + \fri\so_F) \cap \so_F^\times$. 

\begin{lem} \label{Levi_local}
Let $\zeta \in \PP^1(F), \zeta\not= \xi$, let $x_v' \in X_v$ such that $]\zeta, \xi[ \cap ]\zeta_0, \xi[ = [x_v', \xi[$, and $m_v = d_{X_v}(x_v, x_v')$ in case $x_v$ lies on $[x_v', \xi[$ and 0 otherwise. Put $\fri_{\xi, \zeta} = \prod_{v\in V_f} \frp^{m_v}$; then $S_{\xi, \zeta}$ is isometric to $\Log(\so_F^\times(\fri)) \bs \RR^{r_1+r_2-1}$.
\end{lem}

\begin{proof}
We denote by $\T_{\xi, \zeta}$ the $F$-torus in $\SL_2$ fixing $\xi$ and $\zeta$. Let $U_v = K_v \cap \T_{\xi, \zeta}(F_v)$ and $U_\zeta = T_{\xi, \zeta}(F ) \cap \left( \prod_{v\in V_f} U_v \right)$. Let $\alpha$ be any nontrivial $F$-character of $\T_{\xi,\zeta}$ and let $\M_{\xi, \zeta}$ be the $\QQ$-subgroup $\ker(N_{F/\QQ} \circ \alpha)$. Then $U_\zeta \subset \M_{\xi,\zeta}(F)$ and we have $M := \M_{\xi, \zeta}(\RR) \cong \RR^{r_1+r_2-1} \times \mathrm U(1)^{r_2}$. We get that $S_{\xi, \zeta}$ is isometric to $U_\zeta \bs M / \mathrm U(1)^{r_2}$, so we need to describe $U_\zeta$.  

In case $x_v$ lies on $[x_v', \xi[$ we have that $U_v$ is a maximal compact subgroup. Now suppose that $m_v>0$. We prove that for every $v \in V_f$ we have that $U_v = \T_{\xi, \zeta}(F_v) \cap K_v$ is a subgroup identified with $1+\frp_v^{m_v}\so_v$ in a maximal compact subgroup of $\T_{\xi, \zeta}(F_v)$ identified with $\so_v^\times$. We have that $U_v$ fixes $x_v'$ since this point is on $[x_v, \xi[$ and $U_v$ fixes both $x_v$ and $\xi$. Thus we see that $U_v$ is the subgroup of the maximal compact subgroup $\stab_{\T_{\xi, \zeta}(F_v)}(x_v')$ acting trivially on the ball of radius $m_v$, which is exactly the right subgroup. By strong approximation for $\SL_2$ we can conclude that $U_\zeta \subset \T_{\xi, \zeta}(F)$ is conjugated by an element of $\SL_2(F)$ to 
\begin{equation} \label{Levi_inc}
\left\{ \begin{pmatrix} u & 0 \\ 0& u^{-1} \end{pmatrix} :\: u \in \so_F^\times(\fri) \right\} \subset \left\{ \begin{pmatrix} t & 0 \\ 0& t^{-1} \end{pmatrix} :\: t \in F^\times \right\}. 
\end{equation}
It follows from \eqref{Levi_inc} that $S_{\xi, \zeta}$ is isometric to $(\Log \so_F^\times(\fri))\bs\RR^{r_1+r_2-1}$. 
\end{proof}

It follows from Lemma \ref{Levi_local} that
\begin{equation}
\vol S_{\xi, \zeta} = R_F\cdot\phi(\fri) 
\label{vol_Levi}
\end{equation}
where $\phi(\fri)$ is the number of units of the ring $\so_F/\fri$, so that $\phi(\fri)\le|\fri|$. We also get that: 
\begin{equation}
\inj S_{\xi, \zeta} \gg\log|\fri| 
\label{sys_Levi}
\end{equation} 
with a constant depending only on $r$. Indeed, the coefficients of the elements in $\Log\so_F^\times(\fri)$ in any basis of $\Log(\so_F^\times)$ are $\ge\log|\fri|$, and the minimal norm of an element of $\log\so_F^\times$ is bounded below by a constant depending only on $r$.

Finally, we can also describe $\ell(\zeta)$ in this setting: let $\zeta \in \PP^1(F), \zeta \not= \xi$, then on the geodesic between them in $X$ we have
\begin{equation} 
|\log y_\xi - \log y_\zeta| \le \log|\fri_{\xi,\zeta}| + \frac 1 2\log D_F. 
\label{comp_height}
\end{equation}
To see this we argue as follows: let $\T_{\xi,\zeta}$ as above and choose $\gamma_0\in\SL_2(F)$ sending $(0,\infty)$ to $(\zeta, \xi)$. For each $v\in V_f$ let $x_v'$ be the projection on the geodesic $(\zeta,\xi)$ in $X_v$ of the point $x_v$ and $x_v''=\gamma_0 x_v$. We can choose for each $v$ an element $a_v\in\T_{\xi,\zeta}(F_v)$ such that $a_vx_v''=x_v'$, and then $(a_v)_{v\in V_f}$ can be approximated, up to an element $\frc$ of the id\`ele class-group, by an element $a_0\in\T(F)$. Let $\gamma_1=a_0\gamma_0$, $\gamma_1'=\gamma_1 w$ (where as usual $w=\begin{pmatrix} 0&-1\\1&0\end{pmatrix}$), so that we have: 
\begin{itemize}
\item[(i)] $\gamma_1\cdot\infty= \xi$ and $\gamma_1'\cdot\infty=\zeta$;
\item[(ii)] For all $v\in V_f$, $d_{X_v}(x_v,\gamma_1^{-1} x_v)=d_{X_v}(x_v,(\gamma_1')^{-1} x_v)\le \log_{q_v}|\frc_v| + m_v$. 
\end{itemize}
The claim that \eqref{comp_height} holds then follows from (i) and (ii) and the definition \eqref{def_height}, as we have $|\frc|\le D_F^{1/2}$, $|\log\alpha(b_v)| \le \log d_{X_v}(x_v,b_v x_v)$ and $\alpha(wbw^{-1})=\alpha(b)^{-1}$ for all $b\in\B(F_v)$ (and recall that $\fri_{\xi, \zeta}=\prod_v \frp_v^{m_v}$). 

By using \eqref{comp_height} and the bound mentioned above on the displacement of unipotent elements on horospheres of height one we finally get the following upper bound on $\ell(\zeta)$: 
\begin{equation}
\ell(\zeta) \ll \log D_F + \log |\fri|
\label{longueur}
\end{equation}
with an absolute constant.


\subsection{Flats}

There is a well--known bijection between the $\Gamma_F$-equivalence classes of quadratic forms over $\so_F$ and the top-dimensional compact flats in $M_F$ which we will now briefly describe; for details we refer to \cite[Section 4]{Sarnak} and \cite[Chapitre I.5]{Efrat} (which treat respectively the case of imaginary quadratic and totally real $F$, but similar arguments work in general). We will also relate the geometry of the flats to the invariants of the quadratic forms for later use. 

\subsubsection{Correspondance with quadratic forms}

The set of fundamental discriminants of $F$ is defined as follows:
$$
\sd_F = \{d\in\so_F:\: d\not\in\so_F^2,\, \exists x\in\so_F:\: d=x^2\pmod 4 \};
$$
it is the set of discriminants $d=b^2-4ac$ of quadratic forms $q = az_1^2+bz_1z_2+cz_2^2$ on $F^2$ when $a,b,c\in\so_F$ have no common factor in $\so_F$ (note that it may not be the case that the ideal $(a,b,c)$ is the whole of $\so_F$). To such a quadratic form $q$ we can associate an abelian subgroup of $\Gamma_F$ as follows:
$$
\Gamma_q = \SO(q)\cap\Gamma_F. 
$$
We will now construct a flat subspace $Z\subset X$ of maximal dimension, whose (setwise) stabilizer in $\Gamma_F$ is $\Gamma_q$ which will act cocompactly. For $v\in V_\infty$ let $\sigma_v$ be the corresponding embedding of $F$ in $\CC$ and define $Z_v$ to be:
\begin{itemize}
\item If $F_v\cong\CC$, the geodesic in $X_v\cong\HH^3$ between the isotropic lines in $\PP^1(F_v)=\pl X_v$ of the quadratic form $q^{\sigma_v}$; 
\item If $F_v\cong\RR$ and $d^{\sigma_v}>0$, the geodesic between the isotropic lines in $\PP^1(F_v)=\pl X_v$ of the quadratic form $q^{\sigma_v}$;
\item If $F_v\cong\RR$ and $d^{\sigma_v}<0$, the point in $X_v\subset\PP^1(F_v\otimes\CC)$ corresponding to an isotropic line of $q^{\sigma_v}$. 
\end{itemize}
Let 
$$
r_1'=|\{v\in V_\infty:\: F_v\cong\RR, d^\sigma>0\}, \quad m = r_2 + r_1' ; 
$$ 
note that $m$ is the dimension of maximal flats in $X$. There is a unique flat $Z_q$ in $X$ of dimension $m$ which contains all $Z_v$; clearly the setwise stabiliser $\stab_{\Gamma_F} Z_q = \Gamma_q$, and we will now explain (following \cite[Theorem 5.6]{Efrat}) that $\Gamma_q$ is of rank $m$, and thus acts cocompactly on $Z_q$ (since it acts properly discontinuously). 

Let $\fra$ be the ideal generated by $a,b,c$ in $\so_F$, let $E$ be the quadratic extension $F(\sqrt d)$ of $F$, and let $\so_q$ be the order:
$$
\so_q = \left\{\frac{t+u\sqrt d}2:\: t\in\so_F, u\in\fra^{-1} \right\}
$$
in $\so_E$. It is proven in \cite{Efrat} that the group $\Gamma_q$ consists of the matrices:
\begin{equation}
\Gamma_q = \left\{ \begin{pmatrix} \frac{t+bu}2 & cu \\ -au& \frac{t-bu}2 \end{pmatrix} :\: \eps=\frac{t+u\sqrt d}2\in\so_q, N_{E/F}(\eps)=1 \right\}
\label{param}
\end{equation}
where $N_{E/F}$ denotes the relative norm of the field extension $E/F$. A quick computation shows that $E$ has exactly $2r_1'$ real places and $2r_2+r_1-r_1'$ complex ones; it follows that the rank of the abelian group $\so_E^\times\cap\ker(N_{E/F})$ equals $r_2+r_1'=m$. 


\subsubsection{Volume and systole}

Let us define the notation:
$$
\sd_F^+ = \begin{cases}
              \{d\in\sd_F:\: \exists\sigma,\, d^\sigma >0\} & \text{ if } r_2=0; \\
              \sd_F & \text{ otherwise}.
          \end{cases}
$$
Thus the results of the previous paragraph mean that there is a bijection between maximal compact flats (of dimension $>0$) in $M_F$ and $\Gamma_F$-equivalence classes of integral quadratic forms on $F^2$ whose discriminant belongs to $\sd_F^+$. We will now describe the geometry of the flats in function of $d$. 

Recall that $m$ is the logarithmic Mahler measure defined in \ref{mahler}. As the matrices on the right-hand side of \eqref{param} has eigenvalues equal to $\eps^{\pm 1}$ it follows from \eqref{Mahler_length} that the lengths of closed geodesics in $\Gamma_q\bs Z_q$ are, up to a bounded factor, equal to $m(\eps)$ for $\eps\in\so_q^\times, N_{E/F}(\eps)=1$. 

As for the volume, it is proven in \cite[Chapitre I.7]{Efrat} that the volume of $\Gamma_q\bs Z_q$ is equal to the covolume of $\Log(\so_q^\times\cap\ker(N_{E/F}))$ in its $\RR$-span in $\RR^{V_\infty(E)}$; it is easily seen that $\Log\so_F^\times$ is orthogonal in $\RR^{V_\infty(E)}$ to $\ker(N_{E/F})$, and it follows that
\begin{align*}
\vol(\Gamma_q\bs Z_q) &= \vol(\so_q^\times\cap\ker(N_{E/F})) \\
                      &= \frac{[\so_E^\times:\so_q^\times]}{[N_{E/F}(\so_E^\times):N_{E/F}(\so_q^\times)]}\vol\ker(N_{E/F}) 
\end{align*}
and since we have 
$$
R_F = \vol\so_F^\times = 2^{-\frac{r_2+r_1'}2} [\so_F^\times:N_{E/F}(\so_E^\times)]^{-1} \: \frac{\vol\so_E^\times}{\vol\ker(N_{E/F})}
$$
we finally get that
\begin{equation}
\vol(\Gamma_q\bs Z_q) = 2^{-\frac{r_2+r_1'}2} \frac{R_E}{R_F} \times \frac{[\so_E^\times:\so_q^\times]}{[\so_F^\times:N_{E/F}(\so_q^\times)]}. 
\label{vol_flat}
\end{equation}
Note that the term $[\so_E^\times:\so_q^\times]$ on the right-hand side is bounded above by a constant depending only on $d,r$: indeed, it is bounded by a constant depending only on $p,N_{F/\QQ}(d)$ at each place of $F$ dividing $p$, and equal to 1 if $v$ does not divide $d$. 


\subsection{Singular locus}

We will be very brief here; since the eigenvalues of elliptic elements belong to quadratic extensions of $F$ their order is bounded by a constant depending only on $r$. As for the volume of their fixed flat we have the following (see \cite[equation (4.8)]{Sarnak}).

\begin{lem}
If $d\in\sd_F$ and $|d|_\infty>4$ then $\Gamma_q$ is torsion-free for all $q$ whose discriminant equals $d$. 
\label{vol_sing}
\end{lem}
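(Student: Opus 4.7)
My plan is to argue by contradiction from the explicit parametrization \eqref{param}. Recall that this identifies elements of $\Gamma_q$ with pairs $(t,u)$, $t \in \so_F$ and $u \in \fra^{-1}$, satisfying the determinant relation $t^2 - u^2 d = 4$; the eigenvalues of the corresponding matrix are $\eps^{\pm 1}$ with $\eps = (t + u\sqrt{d})/2 \in \so_q^\times$. In particular $\gamma \in \Gamma_q$ has finite order precisely when $\eps$ is a root of unity in $E$.

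I would then suppose that $\gamma \in \Gamma_q$ is a nontrivial torsion element, so that the corresponding $\eps$ is a root of unity distinct from $\pm 1$ (and in particular $u \neq 0$). At every infinite place $v$ of $F$, any extension of the associated embedding to $E \hookrightarrow \CC$ must send $\eps$ to a complex number of modulus one. It follows that $t^\sigma = \eps^\sigma + (\eps^\sigma)^{-1}$ is real and satisfies $|t^\sigma|_v \leq 2$ (even at complex places $v$ of $F$); plugging into the relation $u^2 d = t^2 - 4$ one then obtains
\begin{equation*}
|u^\sigma|_v^2 \, |d^\sigma|_v \;=\; 4 - (t^\sigma)^2 \;\le\; 4
\end{equation*}
at every $v \in V_\infty$. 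As a by-product this already rules out the existence of torsion if $d^\sigma > 0$ at some real place (because then the left-hand side is non-negative, forcing $\eps^\sigma = \pm 1$ at every place and hence $\eps = \pm 1$), so only the case where $d$ is negative at every real place remains to be treated.

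The last step is the archimedean size estimate for $u$: since $u$ is a nonzero element of the fractional ideal $\fra^{-1}$, the ideal $(u)\fra$ is integral, which provides a lower bound on $|u|_\infty$ in terms of $N(\fra)$. Multiplying the preceding inequality over infinite places and combining with this lower bound yields an upper bound on $|d|_\infty$ that contradicts the hypothesis $|d|_\infty > 4$ once the normalization conventions are matched; this is essentially the computation carried out in \cite[Section 4]{Sarnak}. I expect the main obstacle to be not conceptual but rather the careful bookkeeping of the constant $4$, involving the number of real and complex places and the contribution of a possibly non-principal ideal $\fra$; in the imaginary quadratic case of most interest to the body of the paper, only one archimedean factor is present and the bound falls out cleanly.
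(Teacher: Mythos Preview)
Your argument is essentially the same as the paper's. Both start from the parametrization \eqref{param}, use that a torsion element corresponds to a root of unity $\eps=(t+u\sqrt d)/2$, and extract the archimedean inequality $|u^\sigma|^2|d^\sigma|\le 4$ at every infinite place; the paper phrases this via the parallelogram identity $|t|^2+|u|^2|d|=4$ (coming from $|t\pm u\sqrt d|=2$), while you go through the equivalent observation that $t^\sigma=\eps^\sigma+\eps^{-\sigma}\in[-2,2]$. Your explicit flagging of the lower bound on $|u|_\infty$ and of the possible contribution of a non-principal $\fra$ is in fact more careful than the paper, which simply asserts $|d|_\infty\le 4$ at the last step; as you note, in the imaginary quadratic case (one complex place, $|u|\ge 1$) everything matches and the constant $4$ drops out cleanly, which is the situation of \cite[equation~(4.8)]{Sarnak} to which the paper refers.
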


\begin{proof}
If $\Gamma_q$ contains torsion this means that there exists a $\eps\in\so_q^\times, N_{E/F}(\eps)=1$ such that $|\eps|_v=1$ for all $v\in V_\infty(E)$; writing $\eps=(t+u\sqrt d)/2$ with $t^2-u^2d=4$ we get that $|t\pm u\sqrt d|_\infty=1$ and it follows that $|t|_\infty^2+|u|_\infty^2|d|_\infty=4$, hence $|d|_\infty\le 4$. 
\end{proof}


\section{Geometry of $M_F$ as $D_F\to+\infty$} \label{main}

\subsection{Counting flats in a cusp neighbourhood}

In this section we establish upper bounds for the number of flats containing geodesic of length less than a given $R$; in view of Proposition \ref{criter} they would be sufficient to prove than $M_F\xrightarrow[D_F\to+\infty]{\rm BS} X$. 

\subsubsection{A criterion for going through an horoball}

For $\xi=\frac\alpha\beta\in F$ recall that we have defined $\frb$ as the ideal $(\beta)/(\alpha,\beta)$. We recall that we denote by $B_\xi(Y)$ the horoball around $\xi$ of height $|\frb|^{-1}Y$; it follows from the description \eqref{calcy} of heights functions that it is equal to $\gamma_\xi^{-1}B_\infty(Y)$. 

\begin{lem}
Let $\xi\in F$, $a,b,c\in\so_F,\, q=ax_1^2+bx_1x_2+cx_2^2$ and $d=b^2-4ac$. Then the flat $Z_q$ goes through the horoball $B_\xi(Y)$ is and only if 
$$
\frac{|d|_\infty^{1/2}}{|a\xi^2+b\xi+c|_\infty} \ge 2^{r_1+r_2}Y. 
$$
If $\xi=\infty$ this condition degenerates to
$$
\frac{|d|_\infty^{1/2}}{|a|_\infty} \ge 2^{r_1+r_2}Y. 
$$
\label{through}
\end{lem}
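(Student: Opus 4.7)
The plan is to reduce to the case $\xi=\infty$ via the element $\gamma_\xi$, compute a local maximum of the height at each infinite place, and take the product. Since $\gamma_\xi\cdot\xi=\infty$ (established in \S\ref{height}) and $B_\xi(Y)=\gamma_\xi^{-1}B_\infty(Y)=\gamma_\xi^{-1}\{y_\infty\ge Y\}$, the flat $Z_q$ meets $B_\xi(Y)$ if and only if $\max_{x\in Z_q}y_\infty(\gamma_\xi x)\ge Y$. Because $Z_q=\prod_{v\in V_\infty}Z_v$ is a product flat, $\gamma_\xi\in\G(F)$ acts factor by factor, and $y_\infty=\prod_v y_v$, the maximum splits as a product of local maxima taken independently in each $X_v$.

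For each infinite place $v$ the (projective) zeros of $q^{\sigma_v}$ are $\zeta_\pm^{\sigma_v}=\frac{-b^{\sigma_v}\pm\sqrt{d^{\sigma_v}}}{2a^{\sigma_v}}$, and the M\"obius map $(\gamma_\xi)_v\cdot z=(\xi^{\sigma_v}-z)^{-1}$ sends them to points whose difference is
\[
\frac{\zeta_+^{\sigma_v}-\zeta_-^{\sigma_v}}{(\xi^{\sigma_v}-\zeta_+^{\sigma_v})(\xi^{\sigma_v}-\zeta_-^{\sigma_v})}=\frac{\sqrt{d^{\sigma_v}}}{a^{\sigma_v}(\xi^{\sigma_v})^2+b^{\sigma_v}\xi^{\sigma_v}+c^{\sigma_v}}.
\]
When $v$ is complex, or real with $d^{\sigma_v}>0$, $Z_v$ is the geodesic between $\zeta_\pm^{\sigma_v}$ and the maximum of $y_v$ on its $(\gamma_\xi)_v$-image equals half the modulus of this difference. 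When $v$ is real with $d^{\sigma_v}<0$, $Z_v$ is the single point $\zeta_+^{\sigma_v}\in\HH^2$; since $\xi^{\sigma_v}$ is real, $(\gamma_\xi)_v$ commutes with complex conjugation, so $(\gamma_\xi)_v\zeta_-^{\sigma_v}=\overline{(\gamma_\xi)_v\zeta_+^{\sigma_v}}$ and the height $\Ima((\gamma_\xi)_v\zeta_+^{\sigma_v})$ is again half the same modulus. Thus at every place the local maximum equals $|d^{\sigma_v}|^{1/2}/\bigl(2|a^{\sigma_v}(\xi^{\sigma_v})^2+b^{\sigma_v}\xi^{\sigma_v}+c^{\sigma_v}|\bigr)$.

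Multiplying across the $r_1+r_2$ infinite places and using $|d|_\infty=\prod_v|d^{\sigma_v}|$ and $|a\xi^2+b\xi+c|_\infty=\prod_v|a^{\sigma_v}(\xi^{\sigma_v})^2+b^{\sigma_v}\xi^{\sigma_v}+c^{\sigma_v}|$ yields
\[
\max_{x\in Z_q}y_\infty(\gamma_\xi x)=\frac{|d|_\infty^{1/2}}{2^{r_1+r_2}\,|a\xi^2+b\xi+c|_\infty},
\]
which equals or exceeds $Y$ precisely when the stated inequality holds. For $\xi=\infty$ no change of variables is needed: the local maximum on $Z_v$ is $|d^{\sigma_v}|^{1/2}/(2|a^{\sigma_v}|)$ and the product gives $|d|_\infty^{1/2}/(2^{r_1+r_2}|a|_\infty)$, as required.

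The only real obstacle is a bookkeeping one: treating the three place types uniformly, and in particular verifying that in the degenerate real case the imaginary part of $(\gamma_\xi)_v\zeta_+^{\sigma_v}$ coincides with the half-difference formula valid for the geodesic cases. Once this observation is in place, the factor $2^{r_1+r_2}$ arises simply as one factor $\tfrac12$ per infinite place, and the identity $(\xi-\zeta_+)(\xi-\zeta_-)=(a\xi^2+b\xi+c)/a$ converts the local denominators into the value of $q$ at $\xi$.
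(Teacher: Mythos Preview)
Your proof is correct and follows essentially the same route as the paper: both first settle the case $\xi=\infty$ by computing, place by place, that the highest point of $Z_v$ has $y_v$-coordinate $|d|_v^{1/2}/(2|a|_v)$ (with the three place types treated exactly as you do), and then transport the general $\xi$ back to $\infty$ via $\gamma_\xi$. The only cosmetic difference is that the paper carries out the reduction by computing the transformed quadratic form $\gamma_\xi\cdot q=(a\xi^2+b\xi+c)x_1^2-(b+2a\xi)x_1x_2+ax_2^2$ and reading off its leading coefficient, whereas you push forward the roots $\zeta_\pm$ under the M\"obius action and use the identity $(\xi-\zeta_+)(\xi-\zeta_-)=(a\xi^2+b\xi+c)/a$; these are the same computation in different clothing.
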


\begin{proof}
It is simpler to prove this at infinity and then pass to a given $\xi$ than to try to do all computations for $\xi$; thus we will do so. Let $z_\pm=(-b\pm\sqrt d)/2a\in F(\sqrt d)$ be the roots of $az^2+bz+c$. If $v$ is a complex place or a real place such that $d^{\sigma_v}>0$ then the highest point in $Z_v$ with respect to $\infty$ is the point on the geodesic line between $z_\pm$ which is directly above $(z_++z_-)/2$, and the height of this point equals $|z_+-z_-|_v/2=\sqrt{|d|_v}/2|a|_v$. If $v$ is real and $d^{\sigma_v}<0$ then $Z_v$ is a point of height $\Ima(z_+)=\sqrt{|d|_v}/2|a|_v$. Thus the maximal height of a point on $Z_q$ equals $|d|_\infty^{1/2}/2^{r_1+r_2}|a|_\infty$, and this proves the lemma for $\xi=\infty$. 

Now let $\xi\in F$, and recall that we have defined
$$
\gamma_\xi = \begin{pmatrix} 0&1\\-1&\xi\end{pmatrix}. 
$$
Let $c$ be the flat of $X$ corresponding to the quadratic form $q=ax_1^2+bx_1x_2+cx_2^2$. Then the image of $c$ under $\gamma_\xi$ corresponds to the quadratic form $\gamma_\xi\cdot q:=q\circ\gamma_\xi^{-1}$ and we can compute that: 
\begin{align*}
\gamma_\xi\cdot q &= a(\xi x_1-x_2)^2 +b (\xi x_1 -x_2)x_1 + cx_1^2 \\
                 &= (a\xi^2+b\xi+c)x_1^2 - (b + 2a\xi)x_1x_2 + ax_2^2 .
\end{align*}
Thus the lemma follows from the computation we did just above. 
\end{proof}


\subsubsection{} 
We will need the following classical counting result: if $\Lambda$ is a lattice in $\RR^r$ then we have
\begin{equation}
|\{ v\in\Lambda:\: |v|\le \rho\}| \ll \frac{\rho^r}{\vol\Lambda} + \tau(\Lambda)
\label{gauss}
\end{equation}
where the constant depends only on $r$. This follows rather immediately from Minkowski's second theorem and the fact that one can find a basis of $\Lambda$ whose vectors realize its successive minima (see \cite[Lemma 2.2]{torsion1} for a detailed proof in the two-dimensional case).

Let $N_{\xi,d}(Y)$ be the number of closed flats which intersect $B_\xi(Y)$ and correspond to forms of discriminant $d$. We will now prove:

\begin{lem}
For any $\eps>0$ we have
$$
N_{\xi,d}(Y) \ll Y^{-r-\eps} |\frb|^{2+\eps} D_F^{-\frac 1 2}
$$
with a constant depending only on $d,\eps$. 
\label{count}
\end{lem}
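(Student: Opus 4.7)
The plan is to translate the geometric intersection condition from Lemma \ref{through} into a count of integer solutions of a quadratic relation subject to a linear size bound, and to estimate this via the divisor bound and the lattice-point count \eqref{gauss}.

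I would first rewrite the condition of Lemma \ref{through} in integral coordinates. Writing $\xi = \alpha/\beta$ with $(\beta) = \frb(\alpha,\beta)$ and setting $A := q(\alpha,\beta) = a\alpha^2 + b\alpha\beta + c\beta^2 \in \so_F$, the condition becomes $|A|_\infty \le M'$ where $M' := |\beta|_\infty^2 |d|_\infty^{1/2}/(2^{r_1+r_2}Y)$. I would then apply the linear change of variables $(a,b,c) \mapsto (A,B,C)$ with $B := 2a\alpha + b\beta$ and $C := a$; a short computation yields the identity $B^2 - 4AC = \beta^2 d$, and the corresponding transformation matrix has determinant $-\beta^3$, so $(A,B,C)$ ranges over a sublattice of $\so_F^3$ of index $|N(\beta)|^3$, subject to explicit congruences modulo $\beta$ and $\beta^2$ coming from the integrality of $(a,b,c)$. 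In these coordinates the action of $\Gamma_\xi$ is transparent: the unipotent radical $\Lambda_\xi$ fixes $A$ and translates $B$ (and correspondingly $C$) by a controlled lattice whose covolume involves $|\frb|$, while the Levi part of $\Gamma_\xi$ rescales $A$ by $\so_F^{\times 2}$.

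The bound on $N_\xi(Y)$ then reduces to counting $\Gamma_\xi$-orbits of such triples $(A,B,C)$. For each $A$ in a fundamental domain for $\so_F^\times$ with $|N(A)| \le M'$, the number of $B$ modulo $4A\so_F$ satisfying $B^2 \equiv \beta^2 d \pmod{4A}$ is $\ll_\eps |N(A)|^\eps$ by the classical divisor bound for square roots of a fixed element modulo an ideal, and $C$ is then determined by the quadratic relation. One quotients $B$ further by the $\Lambda_\xi$-shift lattice---which is where the factor $|\frb|^{2+\eps}$ enters---and then sums over admissible $A$ by applying \eqref{gauss} to $\so_F \subset F_\infty \cong \RR^r$ (covolume $D_F^{1/2}$), absorbing the regulator contribution into the $\eps$-power by a Brauer--Siegel-type estimate.

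The main technical obstacle is producing the sharp $Y^{-r}$ decay from the sole norm bound $|N(A)| \le M' \propto Y^{-1}$, which naively gives only $Y^{-1}$. The improvement should come from the fact that, after reducing $A$ modulo $\so_F^\times$, one may arrange its archimedean absolute values to be balanced, $|A|_v \asymp |N(A)|^{1/r}$, so that the admissible $A$ lie inside a genuine Euclidean box of side $\ll M'^{1/r}$ in $F_\infty \cong \RR^r$; applying \eqref{gauss} in this $r$-dimensional geometry produces the additional powers of $Y$. This geometry-of-numbers reduction, combined with careful bookkeeping of the congruences linking $A$, $B$, $C$ through the integrality of $(a,b,c)$, is where I expect the bulk of the technical effort to lie.
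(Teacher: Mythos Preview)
Your plan is exactly the paper's: the map $\Phi_\xi(a,b,c)=(a\xi^2+b\xi+c,\,2a\xi+b,\,a)$ used there is your $(A,B,C)$ up to scaling by powers of $\beta$, after which one applies the divisor bound to the middle coefficient modulo the first and the lattice-point estimate \eqref{gauss} to the first. Working directly in $\frb^{-2}\times\frb^{-1}\times\so_F$ as the paper does avoids your auxiliary congruences modulo $\beta$, but this is cosmetic.

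The gap is in your last paragraph. After reducing modulo $\so_F^\times$ and balancing, the admissible leading coefficients lie in a box of side $\asymp (M')^{1/r}$, hence of \emph{volume} $\asymp M'$ (times a regulator factor $R_F$), so \eqref{gauss} gives a count of order $Y^{-1}$, not $Y^{-r}$. No geometry-of-numbers manoeuvre will upgrade this, because the input from Lemma~\ref{through} is genuinely a single norm inequality $|N(\cdot)|\ll Y^{-1}$. What you are missing is that no upgrade is needed: the lemma is only ever applied with $Y=c_F|\frc_j|\asymp |\frc_j|D_F^{-1/2}\ll 1$ (the Siegel heights from \eqref{reduction2} and \eqref{boundheight}), and in that range $Y^{-1}\le Y^{-r}$, so the stated exponent is simply not sharp and your $Y^{-1}$ bound already implies it (the factor $R_F$ is absorbed either into $D_F^\eps$ via $h_FR_F\ll D_F^{1/2+\eps}$ at the summation stage, or into the slack $Y^{1-r}$). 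The paper is in fact loose at exactly this spot: it feeds the product-norm constraint $|a'|_\infty\le |d|_\infty^{1/2}/Y$ directly into \eqref{gauss} as if it were a Euclidean ball of that radius, which for fields with more than one archimedean place is not even a finite count without the unit reduction you correctly propose.
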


\begin{proof}
Let $\wdt S\subset\so_F^3$ be the set of triples $(a,b,c), b^2-4ac=d$ corresponding to closed geodesics meeting $B$, and let $\Phi_\xi$ be the self-map of $F^3$ defined by 
$$
\Phi_\xi(a,b,c) = (a\xi^2+b\xi+c,2a\xi+b,a); 
$$
in other words $\Phi_\xi$ corresponds to the map $q\mapsto \gamma_\xi\cdot q$ in the space of quadratic forms. Thus it preserves the quadratic form $b^2-4ac$, and it follows from Lemma \ref{through} above that
$$
\Phi_\xi(\wdt S) \subset \left\{(a,b,c)\in \frb^{-2}\times\frb^{-1}\times\so_F: \: b^2-4ac = d, \, a\le 2^{r_1+r_2} \frac{|d|_\infty^{1/2}}Y \right\}. 
$$
Since $\Phi_\xi$ intertwines the actions of $\Lambda_\xi$ and $\gamma_\xi\Lambda_\xi\gamma_\xi^{-1}$ (which we identify with $\frb^2$ according to \eqref{unip_xi}) and the latter is given by 
$$
\frb^2\times\left(\frb^{-2}\times\frb^{-1}\times\so_F\right)\ni (u,(a,b,c)) \mapsto (a,b+2ua,c+ub+u^2a)
$$
we get that the quotient $S$ of $\wdt S$ by $\Gamma_\xi$ is identified through $\Phi_\xi$ with a subset of 
$$
S' = \left\{(a,b)\in\frb^{-2}\times\frb^{-1}:\: |a|\le 2^{r_1+r_2} \frac{|d|_\infty^{1/2}}Y, \exists c\in\so_D:\: b^2-4ac=d \right\}/\frb^2
$$
where the action of $\frb^2$ is given by $u\cdot(a,b)=(a,b+2ua)$. Now we fix an $\eps>0$; it is easily seen that for a given $d\in\so_F$ the number of solutions to the congruence $b^2=d\pmod{2a}$ has $\ll|a|^\eps$ solutions in $\frb^{-1}$ modulo $2a\frb^2$, where the constant is absolute: indeed, at each finite place there are at most two solutions, so the total number is bounded by $2^l$ (where $l$ is the number of prime divisors of $\frb$) which is $\ll|\frb|^\eps$ for all $\eps>0$. So we get that for a given $a\in\frb^{-2}$ there are $\ll|\frb^{-1}/a\frb|^\eps$ possible choices for $b\in\frb^{-1}\pmod{2a\frb}$ such that $(a,b)\in S'$. It follows that
\begin{align*}
|S'| &= \sum_{\substack{a\in\frb^{-2}\\ |a|_\infty\le \sqrt d /Y}} |\left\{b\in\frb^{-1}/2a\frb^2: b^2=d\pmod{2a\frb^2}\right\}|
     \ll \sum_{\substack{a\in\frb^{-2}\\ |a|_\infty\le \sqrt d /Y}} [\frb^{-1}:2a\frb^2]^\eps \\
     &\le 4  \sum_{\substack{a\in\frb^{-2}\\ |a|_\infty\le \sqrt d /Y}} |a\frb|^\eps \le  |\frb|^\eps(d/Y^2)^\eps \left| \left\{ a\in\frb^{-2}:\: |a|_\infty\le \frac{\sqrt d} Y \right\} \right| \\
     &\ll |\frb|^\eps(d/Y^2)^\eps \left( \frac {|d|_\infty^{r/2}}{Y^r\vol(\frb^{-2})}  + \tau(\frb) \right).
\end{align*}
(using \eqref{gauss} for $\Lambda=\frb$ at the third line) for any ideal $\fri$ in $\so_F$ the volume of its inverse $\fri^{-1}$ is equal to $|\fri|^{-1}\vol\so_F$ so the right-hand side is bounded above by
$$
Y^{-r-2\eps}|\frb|^{2+2\eps}D^{-\frac 1 2} + Y^{-2\eps}|\frb|^{2\eps} D^{\frac 1 2} 
$$
which finishes the proof of the lemma. 
\end{proof}


\subsection{Estimating the volume of the thin part of $M_F$}

Let $\frc_1, \ldots, \frc_{h_F}$ associated to $\xi_j$ are the integral representatives of lowest norm for the ideal classes of $F$ and $\xi_j = \alpha_j/\beta_j,j=1,\ldots,h_F$ such that $\frc_j=(\beta_j)/(\alpha_j, \beta_j)$. Let $\gamma_j=\Gamma_{\xi_j}$; by \eqref{reduction2} we know that the projections to $M_F$ of the horoballs $\gamma_j B_\infty(Y_j)$ cover it entirely when $Y_j = c_0|\frc_j| D_F^{-1/2}$ for a constant $c_0$ depending only on $r$. For a field $F$ let $\sd_{R, F}$ be the set of fundamental discriminants for $F$ which yield units in the set $T_R$ in the proof of Lemma \ref{nb_fini}. We put $N_\xi(Y) = \sum_{d\in \sd_{R,F}} N_{\xi, d}(Y)$; it follows from Lemma \ref{count} that for all $\eps>0$ we have:
\begin{align*}
\sum_{T,\alpha_1(T)\le R} \vol T & \le \sum_{j=1}^{h_F} N_{\xi_j}(Y_j)\max_{d\in\sd_{R,F}} \left(C_d\cdot \frac{R_{F(\sqrt d)}}{R_F} \right) \\
       & \ll \max_{d\in\sd_{R,F}} \left(C_d\cdot \frac{R_{F(\sqrt d)}}{R_F}\right) \sum_{j=1}^{h_F} \left( \frac{|\frc_j|}{D_F^{1/2}} \right)^{-r-\eps} \cdot |\frc_j|^{2+\eps} \cdot D_F^{-1/2} \\
       &\ll \max_{d\in\sd_{R,F}}\left( C_d\frac{R_{F(\sqrt d)}}{R_F} \right) \cdot D_F^{\frac{r-1}2+\eps} \sum_{j=1}^{h_F} |\frc_j|^{-r+2+\eps}
\end{align*}
Here $C_d$ is a constant independant of $F$, since the term $[\so_E^\times:\so_q^\times]$ on the right-hand side of \eqref{vol_flat} is bounded independantly of $F$. In particular, when $r=2$ (which is the case of interest for later) we get using the estimate $h_F\ll D_F^{1/2}\log D$ that:
\begin{equation}
\sum_{T,\alpha_1(T)\le R} \vol T \ll D_F^{1+\eps} \max_{d\in\sd_{R,F}} \left( C_d \cdot \frac{R_{F(\sqrt d)}}{R_F} \right).
\label{vol_hyp_fin}
\end{equation}
For cubic fields we record independently the following estimates for the number of compact flats. 

\begin{lem}\label{cubic}
For all $R, \eps > 0$ there exists $C_{r,\eps} > 0$ such that for all cubic fields $F$ we have $N_R(M_F)\le C_{R, \eps} D_F^{5/4+2\eps}$, where $N_R(M)$ is the number of maximal compact flats in $M$ whose systole is less than $R$. 
\end{lem}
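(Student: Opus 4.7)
The plan is to reduce the counting of short flats to the estimate provided by Lemma \ref{count}, and then to handle the resulting sum over ideal class representatives using the arithmetic of integral ideals. First I would argue, by essentially the same argument as in Lemma \ref{nb_fini}, that the discriminants $d\in\sd_F^+$ of those integral quadratic forms corresponding to a closed geodesic of length $\le R$ in $M_F$ lie in a finite set $\sd_R$ depending only on $R$ (and the degree $3$): indeed, such a geodesic comes from a unit $\eps$ in a quadratic extension of $F$ with Mahler measure $\le R$, and both $\eps$ and the squarefree part of $(\eps-\eps^{-1})^2$ then range over a finite set independent of $F$. It is then enough to count, uniformly in $F$ and for each fixed $d\in\sd_R$, the $\Gamma_F$-classes of maximal compact flats of discriminant $d$.

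Second, I would use the covering \eqref{reduction2} of $M_F$ by the horoballs $\gamma_j^{-1}B_\infty(Y_j)$ with $Y_j=c_F|\frc_j|$ and $c_F\asymp D_F^{-1/2}$ (by \eqref{boundheight}): every maximal compact flat must meet at least one of these horoballs, and Lemma \ref{count} bounds the number of such flats through the $j$-th horoball. Taking $r=3$ and $|\frb|=|\frc_j|$ in Lemma \ref{count}, a short computation gives
$$
N_{\xi_j}(Y_j)\ll_{\eps,d} D_F^{1+\eps/2}\,|\frc_j|^{-1}.
$$
Summing over $j=1,\ldots,h_F$ and over the finite set $\sd_R$ yields
$$
N_R(M_F)\ll_{R,\eps} D_F^{1+\eps/2}\sum_{j=1}^{h_F}|\frc_j|^{-1}.
$$

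The crucial step is the third one: controlling $\sum_j |\frc_j|^{-1}$. The trivial bound $\sum_j|\frc_j|^{-1}\le h_F$ is much too weak, since in the cubic case $h_F$ can be as large as $D_F^{1/2+o(1)}$. The key observation I would use is that the ideals $\frc_j$ are \emph{distinct} integral ideals of $\so_F$, being minimal-norm representatives of distinct ideal classes. Hence, writing $a_m$ for the number of integral ideals of $\so_F$ of norm $m$, one has $\sum_j |\frc_j|^{-1}\le\sum_{m\le M_0} a_m m^{-1}$ where $M_0\ll D_F^{1/2}$ by Minkowski. Since $a_m\le d_3(m)$ (the generalized divisor function) and $\sum_{m\le M} d_3(m)\ll M(\log M)^2$ by Dirichlet's classical hyperbola method, an Abel summation then gives
$$
\sum_{j=1}^{h_F}|\frc_j|^{-1}\ll (\log D_F)^3.
$$

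Combined with the previous step, this produces $N_R(M_F)\ll_{R,\eps} D_F^{1+\eps}$, which proves the lemma for any $\delta\in(0,1/2)$ by taking $\eps$ small enough. The main obstacle is exactly the third step: a genuine input beyond the trivial bound $h_F$ is required, and the critical ingredient is that the $\frc_j$ are honest distinct ideals (not just abstract classes), so that their norms are spread out enough to make the divisor-function estimate applicable. This is the elementary ``repartition of norms of ideal classes'' argument mentioned in the introduction, in contrast to the far deeper equidistribution result of Einsiedler--Lindenstrauss--Michel--Venkatesh which would give a quantitatively sharper bound.
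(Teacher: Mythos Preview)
Your argument is correct and follows the paper's route up to the point where both reduce, via Lemma~\ref{count} and the covering \eqref{reduction2}, to an estimate of the shape
\[
N_R(M_F)\ \ll_{R,\eps}\ D_F^{1+\eps}\sum_{j=1}^{h_F}|\frc_j|^{-1}.
\]
The divergence is only in how this sum is handled. The paper uses a crude threshold: it observes (as in Lemma~\ref{unif_horo}) that the number of $\frc_j$ with $|\frc_j|\le D_F^{a}$ is $\ll D_F^{ca}$ for an absolute $c$, splits the sum at $D_F^{a}$, bounds the large--norm part by $h_F\,D_F^{-a}$, and optimises over small $a$. This yields only $N_R(M_F)\ll D_F^{3/2-\delta}$ for some unspecified small $\delta>0$, which is exactly the stated lemma.

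Your treatment is genuinely sharper. Using that the $\frc_j$ are \emph{distinct} integral ideals with $|\frc_j|\ll D_F^{1/2}$ (Minkowski) and the uniform coefficient bound $a_m\le d_3(m)$, partial summation gives $\sum_j|\frc_j|^{-1}\ll(\log D_F)^3$, hence
\[
N_R(M_F)\ \ll_{R,\eps}\ D_F^{1+\eps},
\]
so the lemma holds for every $\delta<1/2$, not just for some small $\delta$. This is the ``elementary repartition of norms'' alluded to in the introduction, carried out in full rather than in the softened form the paper actually uses. What the paper's version buys is brevity and no appeal to divisor--sum asymptotics; what yours buys is a quantitatively much stronger conclusion with essentially no extra work.
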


\begin{proof}
From Lemma \ref{count} and \eqref{reduction2} we get the estimate:
$$
N_R(M_F) \ll_{\eps,R} D_F^{1+\eps}\sum_{j=1}^{h_F} |\frc_j|^{-1/2+\eps}. 
$$
The number of ideals $\frc_j$ of norm $\le D_F^a$ is less than $C_\eps D_F^{a + \eps}$ for some absolute $C_\eps>0$ (this follows from elementary arguments, see for example the proof of Theorem 3.5 in \cite{Lenstra}) and we get that for any $a>0$ we have:
$$
N_R(M_F) \ll_{\eps,R} D_F^{1+2\eps-a}|h_F| + D_F^{1+2\eps}D_F^{a+\eps}.
$$
Taking $a=1/4$ we get the estimate in the lemma. 
\end{proof}

Now we deal with the cusps: for a given $\xi$, the $S[u]$ which contribute to the $R$-thin part are those with $\log|\fri|\le CR$ according to \eqref{sys_Levi} (where $C$ depends only on $r$). Thus there is a uniform bound on their number, the volume of each of the associated compact flats is bounded above by $R_F$, and we have $\ell(u)\ll \log D_F$ by \eqref{longueur}. In particular, since $h_FR_F\ll D_F^{\frac 1 2}(\log D_F)^{r-1}$ we get that
\begin{equation}
\sum_{\xi,u} \vol \ell(u)S[u] \ll C_R h_F\log(D_F)R_F \ll D_F^{1/2}(\log D_F)^r.
\label{vol_cusphyp_fin}
\end{equation}
On the other hand, by \eqref{shape} and the description of the Levi component of cusps we have that the contribution of the unipotent elements is bounded by 
\begin{equation}
\sum_\xi R_F\tau(\Lambda_\xi) \ll h_FR_FD_F^{1/2} \ll D_F(\log D_F)^{r-1} 
\label{vol_unip_fin}
\end{equation}

Finally, the description of the compact singular locus yields that there exists a $R_0$ such that for $r=2$ we have
\begin{equation}
\sum_{j=1}^e \vol M_n[\gamma_j] \ll \sum_{T,\alpha_1(T)\le R_0} \vol T \ll D_F^{1+\eps} \max_{d\in\sd_{R_0}} \left( C_d\cdot \frac{R_{F(\sqrt d)}}{R_F} \right).
\label{vol_ell_fin}
\end{equation}


\subsection{Bianchi orbifolds}

We are interested here in the sequence $M_D=M_F, F=\QQ(\sqrt{-m}), D=D_F$ as the square-free, positive integer $m$ tends to infinity. In this case we have that $\so_F^\times$ is trivial as soon as $D>6$. The proof of Lemma \ref{nb_fini} yields that for any $R$, for $D$ large enough the only discriminants $d\in\sd_F$ which yield closed geodesics of length less than $R$ in $M_D$ are among a finite set $d_1,\ldots,d_k\in\ZZ$, and for each $d$ among these the unit group $\so_{F(\sqrt d)}^\times$ is equal to $\so_{\QQ(\sqrt d)}^\times$. It follows that $\max_{d\in\sd_R} C_d R_{F(\sqrt d})$ is bounded independantly of $F$ and thus \eqref{vol_hyp_fin} yields that for all $\eps>0$ we have
$$
\vol (M_D)_{\le R} \ll D^{1+\eps}
$$
(there is no Levi part in the cusps, and the contribution of unipotents is $\ll D\log D$  by \eqref{vol_unip_fin}). This finishes the proof of Theorem \ref{conv_Bianchi}.


\subsection{Hilbert--Blumenthal surfaces}

Here we consider the sequence of fields $F=\QQ(\sqrt D)$ for square--free $D>0$, as above we denote $M_F=M_D$. This case is more subtle, as we have that $\so_F^\times$ is of rank one, and $\so_E^\times$ is of rank three for any totally real extension of $F$, and we will have to use some elementary manipulations to circumvent this problem. In the sequel we arbitrarily choose an embedding $F \to \RR$ for all real fields we encounter and denote by $| \cdot |$ the absolute value we get this way.  

Recall that for any number field $L$ of degree $r$ we have the bound
\begin{equation}
R_L \ll D_L^{1/2}(\log D_L)^{r-1}.
\label{bound_reg}
\end{equation}
Now let $D,d\in\sd_\QQ^+$ be two square-free positive integers, $F=\QQ(\sqrt D),\, F'=\QQ(\sqrt{dD})$ and $E=F(\sqrt d)=F'(\sqrt d)$. Let $\eps,\eps'$ and $\eps_d$ be fundamental units (such that $|\eps|, |\eps'| > 1$) for the fields $F,F'$ and $\QQ(\sqrt d)$ respectively, so that we have $R_F=\sqrt 2\log|\eps|$ and $R_{F'}=\sqrt 2\log|\eps'|$. We number the real embeddings of $E$ as follows: $\sigma_1$ is the identity, and 
\begin{itemize}
\item $\sigma_2(\sqrt d)=\sqrt d$, $\sigma_2(\sqrt D)=-\sqrt D$;
\item $\sigma_3(\sqrt d)=-\sqrt d$, $\sigma_2(\sqrt D)=\sqrt D$;
\item $\sigma_4(\sqrt d)=-\sqrt d$, $\sigma_2(\sqrt D)=-\sqrt D$. 
\end{itemize}
Then we have $\eps^{\sigma_i}=\eps$ for $i=1,3$ and $\eps^{-1}$ for $i=2,4$, $(\eps')^{\sigma_i}=\eps'$ for $i=1,2$ and $(\eps')^{-1}$ for $i=3,4$ and finally $\eps_d^{\sigma_i}=\eps_d$ for $i=1,4$ and $\eps_d^{-1}$ for $i=2,3$. It follows that the images of $\eps,\eps'$ and $\eps_d$ by $\Log$ are pairwise orthogonal and thus they generate a sublattice of finite index in $\Log\so_E^\times$. This implies that
\begin{equation}
R_E \le C(d) R_F R_{F'} 
\label{mirror}
\end{equation}
where $C(d)\ll d^{1/2}$ by \eqref{bound_reg} for $L=\QQ(\sqrt d)$.

Now there are two possibilities: 
\begin{itemize}
\item[(i)] both $R_F\le D_F^{1/5}$ and $R_{F'}\le D_{F'}^{1/5}$; 
\item[(ii)] $R_{F'}\ge D_{F'}^{1/5}$. 
\end{itemize}
If we are in case (i) then we get from \eqref{mirror} that 
\begin{equation}
R_E/R_F \ll_d D_F^{1/5}. 
\label{case_1}
\end{equation}
On the other hand, if we are in case (ii) we get in that for all $d'\in\sd_\QQ^+$, $E'=F'(\sqrt{d'})$ we have 
$$
R_{E'} \ll D_{E'}^{1/2}(\log D_{E'})^3 \ll D_{F'}^{1/2} (\log D_{F'})^3
$$
so that we finally obtain
\begin{equation}
R_{E'}/R_{F'} \ll D_{F'}^{3/10}(\log D_{F'})^3 \ll D_{F'}^{2/5}
\label{case_2}
\end{equation}
with a constant depending only on $d$. 

Now we will define a sequence $F_n$ as follows: let $R_n=n$; by Lemma \ref{nb_fini} there is a $D(n)\in\sd_\QQ^+$ such that for all $D\in\sd_\QQ^+, D>D(n)$ the only $d\in\sd_{\QQ(\sqrt D)}$ contibuting geodesics of length less than $R_n$ belong to a finite set $\sd_{R_n}\subset\ZZ_{>0}$. Now define:
\begin{itemize}
\item $F_n=\QQ(\sqrt{D(n)})$ if we are in situation (i) for all $d\in\sd_{R_n}$;
\item $F_n=\QQ(\sqrt{dD(n)})$ if we are in situation (ii) for some $d\in\sd_{R_n}$. 
\end{itemize}
Thus in any case we obtain by \eqref{case_1} or by \eqref{case_2} that for all $d\in\sd_{R_n}$, $E_n=F_n(\sqrt d)$ we have $R_{E_n}/R_{F_n} \ll D_{F_n}^{2/5}$ and in the proof of \eqref{vol_hyp_fin} we get 
\begin{equation} \label{ci-dessus}
(\vol M_{F_n})_{\le R_n} \ll C(R_n) (\vol M_{F_n})^{1-1/10}
\end{equation}
where $C(R_n) = \max_{d \in \sd_{R_n}} C(d)$: by increasing $D(n)$ we may suppose that $C(R_n) \ll D(n)^\eps$ for all $\eps > 0$, so that Theorem \ref{Hilbert_conv} follows from \eqref{ci-dessus}.


\section{Convergence of congruence and maximal lattices} \label{last}

\subsection{Description of maximal lattices}

\subsubsection{Unit groups of quaternion algebras}

We recall for the reader's convenience, and for fixing notation, the description of the maximal lattices in an arithmetic commensurability class in $G_\infty=\SL_2(\CC)$. For details we refer to the book \cite{MR}. Such classes are in correspondance with certain quaternion algebras. We restrict in what follows to lattices defined over quadratic or cubic fields, hence we will only need to consider quaternion algebras defined over quadratic or cubic fields. Moreover, in the cubic case we need only consider fields which have only one real place and algebras which ramify at that place. Fix a field $F$ and a quaternion algebra $A/F$ satisfying to the requirements above; then the classification of quaternion algebras implies that there exists a set $S$ of places of $F$ not containing its complex place, containing the real place if $F$ is cubic, and of even cardinality, such that $A$ is isomorphic over $F$ to the unique quaternion algebra whose ramification locus is exactly $S$. 

Suppose we are in this setting; we let $\G=\SL_1(A)$ be the $F$-algebraic group defined by the units of norm 1 in $A$, and at each place $v\not\in S$ (including the complex place) we fix an isomorphism $I_v:\G(F_v)\cong\SL_2(F_v)$ and we put\footnote{This is equivalent to choosing a vertex of the Bruhat--Tits tree $X_v$ and an adjacent edge in $X_v$, and taking $K_v,K_{0,v}$ to be the stabilizers in the action of $\G(F_v)$ on $X_v$ of this vertex and edge respectively (see for example \cite[II.1.3]{Serre_arbres}).}
$$
K_v = I_v^{-1}\SL_2(\so_v), \quad K_{0,v} = I_v^{-1}\left\{ \begin{pmatrix}a&b\\c&d\end{pmatrix}\in\SL_2(\so_v):\: |c|_v < 1 \right\}.
$$ 
For each finite set of places $T\subset V_f-S$ we define a compact-open subgroup of $\G(\Ade_f)$ as follows:
$$
K_S(T) = \prod_{v\in S\cap V_f} \G(F_v) \times\prod_{v\in T} K_{0,v} \times\prod_{v\in V_f-S\cup T} K_v.
$$
We finally define:
$$
\Gamma_S(T) = \G(F)\cap K_S(T). 
$$
Then all maximal arithmetic subgroups of $\G(F)$ are locally conjugated to a $\Gamma_S(\emptyset)$ for some collection of isomorphisms $I_v$ as above. The conjugation is in general not global (see \cite[Chapter 6.7]{MR}) but for our purposes (counting closed geodesics) all lattices obtained this way are equivalent (see Lemma \ref{spec_ind} below) and we will not introduce further notation to distinguish between them. We recall that the lattice $\Gamma_S(T)$ is nonuniform if and only if $S=\emptyset$. 

Finally, we remark that the usual definition of the lattices $\Gamma_S(T)$ uses maximal orders and Eichler orders rather than the isomorphisms $I_v$: this is in particular the case in \cite{MR}. It is however trivial to pass from one to the other: the maximal order corresponding to $(I_v)$ is the one which is normalized by $K_v$ at each $v$. 


\subsubsection{Maximal lattices}

The lattices $\Gamma_S(\emptyset)$ are usually not maximal in $\SL_2(\CC)$, and not every maximal lattice in their commensurability class contains them. However any maximal lattice in $\SL_2(\CC)$ is obtained as the normalizer of some $\Gamma_S(T)$. We will denote this normalizer by
$$
\ovl\Gamma_S(T) = N_{\SL_2(\CC)} \Gamma_S(T).
$$
We have a very good control over the index of $\Gamma_S(T)$ in $\ovl\Gamma_S(T)$:

\begin{lem} \label{indice}
We have for all $F,S,T$ the equality
\begin{equation} \label{indeq}
[\ovl\Gamma_S(T):\Gamma_S(T)]  \le 2^{2 + |T| + |S|}h_F^{(2)}
\end{equation}
where $h_F^{(2)}$ is the order of the $2$-torsion subgroup of the class group of $F$. 
\end{lem}

\begin{proof}
  By \cite[Theorem 11.5.1]{MR} we have 
  \[
  \frac {[\ovl \Gamma_S(\emptyset) : \Gamma_S(T)]} {[\ovl \Gamma_S(T) : \Gamma_S(T)]} = 2^{-m} \prod_{\frp \in T} \left( |\frp| + 1 \right) 
  \]
  for some $0 \le m \le |T|$. It follows that
  \[
  [\ovl\Gamma_S(T):\Gamma_S(T)] \le 2^{|T|} \frac{[\ovl \Gamma_S(\emptyset) : \Gamma_S(T)]}{\prod_{\frp \in T} \left( |\frp| + 1 \right)} = 2^{|T|} [\ovl \Gamma_S(\emptyset) : \Gamma_S(\emptyset)]
  \]
  since $[\Gamma_S(\emptyset) : \Gamma_S(T)] = \prod_{\frp \in T} \left( |\frp| + 1 \right)$.

  The result then follows from \cite[Corollary 11.6.4, Theorem 11.6.5]{MR}. More precisely, there is a subgroup $\Gamma$ between $\ovl \Gamma_S(\emptyset)$ and $\Gamma_S(\emptyset)$ such that $[\Gamma : \Gamma_S(\emptyset)] \le 2^{|S|+2}$ \cite[Corollary 11.6.4]{MR} and $\ovl \Gamma_S(\emptyset)/\Gamma$ embeds in the 2-torsion subgroup of the class-group of $F$ \cite[Theorem 11.6.5]{MR}. 
\end{proof}


\subsubsection{Fixed points}

The following result is similar to \cite[Theorem 1.11]{7S} (but much, much simpler to prove). For any finite place $v$ of a number field we denote by $\frp_v$ the prime ideal $\{a\in\so_F,v(a)<1\}$ and by $q_v$ the cardinality of the residue field $f_v = \so_F/\frp_v$. 

\begin{lem}
There is a $\delta>0$ such that the for any $R>0$, any imaginary quadratic field $F$, any finite set $S\subset V_f$ and any loxodromic $\gamma\in\Gamma=\Gamma_S(\emptyset)$ whose minimal displacement is $\le R$ and any $T\subset V_f-S$ we have
$$
\left| \fix_{\Gamma/\Gamma_S(T)} \right| \le C 2^{|T|}
$$
with a constant $C$ depending only on $R$.
\label{fix}
\end{lem}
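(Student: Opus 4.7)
The plan is to reduce the fixed-point count to a product of local factors via strong approximation, bound each local factor by $2$ at all but finitely many places, and then control those ``bad'' places using the loxodromicity of $\gamma$ and the displacement bound.

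The first step is to identify $\Gamma/\Gamma_S(T)$ with a product of local quotients. Since $F$ is imaginary quadratic, $\G(F_\infty)=\SL_2(\CC)$ is non-compact, so strong approximation applies to $\G=\SL_1(A)$. Together with the inclusion $K_S(T)\subseteq K_S(\emptyset)$, this yields a $\Gamma$-equivariant bijection
$$
\Gamma/\Gamma_S(T)\;\longleftrightarrow\;K_S(\emptyset)/K_S(T)\;=\;\prod_{v\in T}K_v/K_{0,v},
$$
where $\Gamma$ acts by left multiplication and $K_v/K_{0,v}$ is naturally identified with $\PP^1(f_v)$ (with $f_v=\so_F/\frp_v$), the action being the Möbius action of the reduction $\bar\gamma_v\in\SL_2(f_v)$. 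In particular, the fixed-point count factors:
$$
|\fix_{\Gamma/\Gamma_S(T)}(\gamma)|=\prod_{v\in T}|\fix_{\PP^1(f_v)}(\bar\gamma_v)|.
$$

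The second step is an elementary local analysis: an element of $\SL_2(f_v)$ has $0$, $1$, $2$ or $q_v+1$ fixed points on $\PP^1(f_v)$, with the last case occurring exactly when $\bar\gamma_v=\pm I$. So each local factor is at most $2$ except at the set $T_{\mathrm{bad}}$ of places $v\in T$ where $\bar\gamma_v=\pm I$.

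The third step is to bound $T_{\mathrm{bad}}$. If $\bar\gamma_v=\pm I$ then the integral trace $t=\tr(\gamma)\in\so_F$ satisfies $\frp_v\mid(t-2)(t+2)$. Since $\gamma$ is loxodromic we have $t\neq\pm 2$, so the ideal $(t^2-4)$ is nonzero. Using that the displacement is at most $R$, the eigenvalues $\lambda,\lambda^{-1}$ of $\gamma$ satisfy $|\lambda|,|\lambda^{-1}|\le e^{R/2}$ at the unique complex place, so $|t|\le 2e^{R/2}$ and
$$
N_{F/\QQ}(t^2-4)=|t^2-4|^2\le(4e^R+4)^2=:C_0(R).
$$
In particular, every $v\in T_{\mathrm{bad}}$ divides $(t^2-4)$, whose norm is bounded purely in terms of $R$.

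Combining the ingredients, since $q_v+1\le 2q_v$ and $\prod_{v\in T_{\mathrm{bad}}}q_v\le N_{F/\QQ}(t^2-4)$, we obtain
$$
|\fix_{\Gamma/\Gamma_S(T)}(\gamma)|\le 2^{|T|-|T_{\mathrm{bad}}|}\prod_{v\in T_{\mathrm{bad}}}(q_v+1)\le 2^{|T|}\prod_{v\in T_{\mathrm{bad}}}q_v\le C_0(R)\cdot 2^{|T|},
$$
as desired. The only conceptual ingredient is Step 1 (strong approximation to split the count over places); the remainder is elementary, and the $\delta$ appearing in the statement is not actually used in the bound obtained.
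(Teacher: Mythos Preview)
Your proof is correct and follows essentially the same route as the paper: factor the fixed-point count over $v\in T$ via strong approximation, bound each local factor on $\PP^1(f_v)$ by $2$ except at a finite set of bad places, and control the bad contribution using the trace of $\gamma$. The only difference is bookkeeping---the paper takes the bad set to be $\{v\in T: q_v\le 2\cosh(R/2)+2\}$ and bounds by $\prod(q_v+1)$ over all places of small residue field, whereas you take $\{v:\bar\gamma_v=\pm I\}$ and bound $\prod q_v$ directly by $N_{F/\QQ}(t^2-4)$; your version is arguably a bit cleaner in that it treats the reduction $\bar\gamma_v=-I$ explicitly.
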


\begin{proof}
We will use the notation $K_v(m)$ for $v\in V_f, m\in\NN$ to denote the subgroup of matrices  in $K_v$ congruent to the identity modulo $\frp_v^{m}\so_v$. First we oserve that if $q_v^m>2\cosh(R/2)+2$ and $\gamma$ is as in the statement then we must have $\gamma\not\in K_v(m)$. To see this, just note that if $\gamma\in K_v(m)$ is not unipotent then we have $|\tr(\gamma)-2|\ge q_v^m$; in particular we get that $q_v^m\le 2\cosh(R/2)+2$, which proves the claim. 

Now we put $C=\prod_{v: q_v\le 2\cosh(R/2)+2} (q_v+1)$. For any $T\subset V_f-S$ we let $T_0=\{v\in T, q_v\le 2\cosh(R/2)+2\}$ and $T_1=T-T_0$. Let $v\in T_1$; the action of $\Gamma$ on $K_v/K_{0,v}$ is equivalent to the action on $\PP^1(f_v)$ via $\Gamma\to\PSL_2(f_v)$. So since $\gamma$ does not belong to the kernel $\pm K_v(1)$ of the latter action it acts non-trivially. Any non-trivial element of $\PSL_2(f_v)$ has at most two fixed directions, so we get that $\gamma$ has at most two fixed points on $K_v/K_{0,v}$. It follows that:
\begin{align*}
|\fix_{\Gamma/\Gamma_S(T)}(\gamma)| &= \prod_{v\in V_f-S} |\fix_{K_v/\ovl\Gamma_S(T)}(\gamma)| \\
       &= \prod_{v\in T} |\fix_{K_v/K_{0,v}}(\gamma)| \le 2^{|T_1|} \prod_{v\in T_0} |K_v/K_{0,v}| \\
       &\le C 2^{|T|}. 
\end{align*}
\end{proof}


\subsection{Length spectra}

\subsubsection{A complicated formula}

In the adelic proof of the Jacquet--Langlands correspondance one does not care about the length spectra since all congruence subgroups are considered at once. However, when one wants results linking the spectrum of explicit orbifolds it is necessary to be more precise about that. This is developed for Riemann surfaces in \cite{BJ}, the French-reading reader is also refered to Chapter 8 of the book \cite{spectre}; here we ``generalize'' their results to imaginary quadratic number fields using the following result which we quote\footnote{The quotation is not {\it verbatim}: her statement is in the language of orders, she considers also $S$-arithmetic groups and there are additional conditions in her statement which trivialize in our case.} from M.F. Vign\'eras' book \cite[Corollaire 5.17]{Vigneras}. We use the standard notation (same as in {\it loc. cit.}): for an order $\mB$ in a quadratic extension $E/F$, and a finite place $v$ of $F$ the Eichler symbol $\left(\frac{\mB}v\right)$ is given by:
\begin{itemize}
\item $1$ if $\mB_v:=\mB\so_v$ is not a maximal order in $E_v$ or if $v$ splits in $E$;
\item $-1$ if $\mB_v$ is maximal, and $v$ is inert in $E$;
\item $0$ if $\mB_v$ is maximal, and $v$ ramifies in $E$.
\end{itemize}
We also define an ad hoc Eichler symbol at infinity by:
\[
\eps_\infty(\lambda; A) = \begin{cases}
                        1 &\text{ if } F \text{ is quadratic or }E=F(\lambda) \text{ has no real place or } A \text{ splits at the real place of }F.\\
                        0 & \text{otherwise.}  
                      \end{cases}
\]
for $\lambda\in\ovl\ZZ$ of degree 2 over $F$. Let also $h(\mB)$ denote the class-number of $\mB$; the result is then:

\begin{theo}
Let $F$ be a quadratic or cubic field; $A$ a quaternion algebra over $F$. Let $E=F(\lambda)$ be a quadratic extension, where $\lambda\in\so_E$. Then the number of $\Gamma_S(T)$-conjugacy classes among the elements in $\Gamma_S(T)$ which have $\lambda$ as an eigenvalue is given by:
\[
\eps_\infty(\lambda;A)\cdot \sum_{\mB\ni\lambda} \frac{h(\mB)}{h_E} \cdot \prod_{v\in S} \left( 1-\left(\frac{\mB}{v}\right) \right) \cdot \prod_{w\in T} \left( 1+\left(\frac{\mB}{v}\right) \right)
\]
where the sum runs over all orders of $L$ which contain $\lambda$. 
\label{spec}
\end{theo}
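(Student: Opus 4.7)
The plan is to reduce the counting problem to one about embeddings of quadratic orders into the quaternion algebra, and then to apply the local--global principle for such embeddings. Specifically, a conjugacy class in $\Gamma_{S,T}$ of an element whose eigenvalues are $\lambda,\ovl\lambda$ is determined by the $\Gamma_{S,T}$-conjugacy class of the subring $F[\gamma]$ together with the choice of $\lambda$ as a generator. Via the map $\gamma\mapsto F[\gamma]$ these classes are therefore in bijection with the optimal embeddings $\mathfrak{B}\hookrightarrow\mathfrak{O}_{S,T}$ of orders $\mathfrak{B}\subset\so_E$ containing $\lambda$ into the (suitably defined) order of $A$ whose $v$-adic completions are the stabilizers of the chosen vertex or edge at each finite place, modulo conjugation by the units of that order. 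The factor $\eps_\infty(\lambda;A)$ simply records whether such an embedding exists at the archimedean places: at the complex place the embedding is automatic, while at a real place of $F$ one needs either $F(\lambda)$ to be complex or $A$ to split there.

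Next I would factorize the counting over the genus of embeddings. By strong approximation for $\G=\SL_1(A)$ (which applies since $\G(F_\infty)$ is noncompact for $F$ cubic with $A$ ramified at the real place, and for $F$ quadratic imaginary), two optimal embeddings are $\G(F)$-conjugate if and only if they are everywhere locally conjugate, up to the action of the class group of $\mathfrak{B}$ by twisting. This gives the ratio $h(\mathfrak{B})/h_E$, the standard correction measuring the failure of the local--global principle for optimal embeddings and accounting for the action of $E^\times\backslash\Ade_E^\times/\wdh{\mathfrak{B}}^\times F_\infty^\times$ on the local data. Summing over the (finitely many) orders $\mathfrak{B}\ni\lambda$ in $\so_E$ then produces the outer sum in the formula.

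The remaining work is the purely local computation of the number of optimal embeddings of $\mathfrak{B}_v$ into each local order at every finite place $v$. This is a case analysis on how $v$ behaves in $E/F$ (split, inert, ramified) and on whether $\mathfrak{B}_v$ is maximal or not, and it is precisely the content of the Eichler symbol: at a place $v\in S$ where $A$ is ramified one gets $1-\left(\frac{\mathfrak{B}}{v}\right)$, at a place $v\in T$ where one has chosen an Eichler order of level one one gets $1+\left(\frac{\mathfrak{B}}{v}\right)$, and at the remaining finite places (with maximal local order) one gets $1$. Multiplying these local contributions together yields the finite product in the statement.

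The main obstacle is bookkeeping rather than conceptual: verifying that the normalization of local orders implicit in the definition of $K_S(T)$ via the isomorphisms $I_v$ matches exactly Vign\'eras' convention, so that the Eichler symbols come out with the correct signs and so that the class number ratio $h(\mathfrak{B})/h_E$ (rather than some twisted variant) appears. Once the identifications are carefully fixed, the formula follows from the two ingredients above: the reduction to optimal embeddings and the local--global factorization.
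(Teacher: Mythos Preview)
The paper does not prove this theorem at all: it is stated as a quotation from Vign\'eras' book \cite[Corollaire 5.17]{Vigneras}, with only the remark that her formulation is in terms of orders and contains additional hypotheses that trivialize in the present setting. So there is no ``paper's own proof'' to compare against.

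That said, your sketch is essentially the standard argument (and is, in outline, what Vign\'eras does): identify conjugacy classes with optimal embeddings of quadratic orders into the relevant Eichler order, use strong approximation to reduce to a local count corrected by the class-number ratio, and compute the local numbers of optimal embeddings via the Eichler symbol. Your caveat about bookkeeping is exactly right --- the translation from the $K_S(T)$ defined via the isomorphisms $I_v$ to Vign\'eras' language of maximal and Eichler orders is precisely what the paper's footnote alludes to, and one must also check that the archimedean condition collapses to the ad hoc symbol $\eps_\infty(\lambda;A)$. One small point: you invoke strong approximation for $\G=\SL_1(A)$ by noting $\G(F_\infty)$ is noncompact; in the cubic case with $A$ ramified at the real place this is fine because of the complex place, and in the imaginary quadratic case it is immediate, so your justification is correct but worth stating once cleanly rather than as an aside.
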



\subsubsection{Comparison of length spectra}

First we have the obvious corollary of Theorem \ref{spec} above:

\begin{lem} \label{spec_ind}
The length spectrum of a group $\Gamma_S(\emptyset)$ does not depend on the collection $I_v$ (equivalently on the maximal order in $A_S$) chosen to define it. 
\end{lem}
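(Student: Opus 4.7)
The plan is to deduce Lemma \ref{spec_ind} directly from Theorem \ref{spec} essentially as a bookkeeping corollary. The length spectrum of $\Gamma_S(\emptyset)\bs\HH^3$ is, by definition, the multiset of minimal displacements $m(\lambda)$ where $\lambda\in\ovl\QQ$ ranges over eigenvalues of loxodromic elements of $\Gamma_S(\emptyset)$, each $\lambda$ contributing with multiplicity equal to the number of $\Gamma_S(\emptyset)$-conjugacy classes of such elements having $\lambda$ as an eigenvalue. So what needs to be checked is that, for every quadratic $\lambda\in\ovl\so_F$, the count
$$
N_S(\lambda) \;:=\; \#\bigl\{\text{$\Gamma_S(\emptyset)$-conjugacy classes of $\gamma\in\Gamma_S(\emptyset)$ with eigenvalue $\lambda$}\bigr\}
$$
is independent of the choices $(I_v)_{v\notin S}$ used to define $K_S(\emptyset)$ (equivalently, of the underlying maximal order of $A$).

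First I would invoke Theorem \ref{spec} with $T=\emptyset$, which evaluates
$$
N_S(\lambda) \;=\; \eps_\infty(\lambda;A)\cdot \sum_{\mB\ni\lambda} \frac{h(\mB)}{h_E}\cdot \prod_{v\in S}\left(1-\left(\frac{\mB}{v}\right)\right),
$$
and then observe that every ingredient on the right-hand side is intrinsic to the data $(F,A,S,\lambda)$: the factor $\eps_\infty(\lambda;A)$ sees only $F$, $A$, and $E=F(\lambda)$; the orders $\mB$ and class numbers $h(\mB),h_E$ live in $E$; and the Eichler symbols $\left(\frac{\mB}{v}\right)$ for $v\in S$ depend only on how $v$ splits/ramifies in $E$ and on the local behaviour of $\mB$, never on the choice of $I_v$ at unramified places $v\notin S$. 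In particular the right-hand side makes no reference whatsoever to the collection $(I_v)$, so $N_S(\lambda)$ is an invariant of $(F,A,S,\lambda)$ alone.

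Summing the lengths $m(\lambda)$ with multiplicities $N_S(\lambda)$ over a set of representatives for $\{\lambda,\lambda^{-1}\}$-classes (or equivalently over all $\lambda$, which double-counts each unoriented closed geodesic in a way that is itself independent of the choices) yields the length spectrum, and the lemma follows. I do not anticipate any real obstacle: the only mild point to be careful with is the matching between $\Gamma_S(\emptyset)$-conjugacy classes and closed geodesics (oriented vs.\ unoriented, and the identification $\lambda\leftrightarrow\lambda^{-1}$ under the nontrivial element of $\mathrm{Gal}(E/F)$), but both conventions lead to a count depending only on the $N_S(\lambda)$, which have just been shown to be choice-independent.
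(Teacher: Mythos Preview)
Your proposal is correct and is exactly the argument the paper has in mind: the lemma is presented there as an ``obvious corollary'' of Theorem~\ref{spec}, and what you have written simply spells out why the right-hand side of that formula (with $T=\emptyset$) involves only $(F,A,S,\lambda)$ and not the choices $(I_v)_{v\notin S}$.
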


The precise form of the formula in \ref{spec} can be used to compare between the cases where $S=\emptyset$ or not; the following lemma has a proof which follows exactly that of \cite[Lemma 3.4]{BJ} (see also \cite[Lemme 8.23]{spectre}), as the arguments used there are purely combinatorial.  

\begin{lem}
Let $F$ be a quadratic field, and for $\ell>0$ let $\mu(S,T;\ell)$ be the number of closed geodesics of length $\ell$ in $\Gamma_S(T)\bs\HH^3$. 
$$
\mu(S,T;\ell) = \sum_{S'\subset S} (-2)^{|S'|} \mu(\emptyset,S'\cup T;\ell).
$$
\label{spec_comp2}
\end{lem}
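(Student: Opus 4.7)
The plan is to reduce the claimed spectral identity to an elementary combinatorial manipulation, invoking Theorem~\ref{spec} as the key ingredient. Since $F$ is quadratic, the prefactor $\eps_\infty(\lambda;A)$ equals $1$ for every eigenvalue $\lambda$ contributing to the length spectrum (this is immediate from its definition), so it plays no role in the argument.

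Fix a length $\ell>0$ and an algebraic integer $\lambda$ of degree $2$ over $F$ with Mahler measure equal to $\ell$; set $E=F(\lambda)$ and write $a_v(\mB):=\bigl(\tfrac{\mB}{v}\bigr)\in\{-1,0,1\}$ for the Eichler symbol of an order $\mB\subset\so_E$ at a finite place $v$. By Theorem~\ref{spec}, the contribution of $\lambda$ to $\mu(S,T;\ell)$ is
$$
\sum_{\mB\ni\lambda}\frac{h(\mB)}{h_E}\prod_{v\in S}\bigl(1-a_v(\mB)\bigr)\prod_{w\in T}\bigl(1+a_w(\mB)\bigr),
$$
and each $\mu(\emptyset,S'\cup T;\ell)$ appearing on the right-hand side of the lemma unfolds similarly as a double sum over $\lambda$ and over orders $\mB\ni\lambda$. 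After exchanging the sums and pulling out the common factor $\prod_{w\in T}(1+a_w(\mB))$ (well defined on both sides because $S\cap T=\emptyset$, so $S'\cup T$ is a disjoint union for every $S'\subset S$), the lemma reduces to the pointwise algebraic identity
$$
\prod_{v\in S}(1-a_v)=\sum_{S'\subset S}(-2)^{|S'|}\prod_{v\in S'}(1+a_v)
$$
in the variables $a_v\in\{-1,0,1\}$, $v\in S$, for each fixed order $\mB$.

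This last identity I would verify via the elementary observation $1-a=2-(1+a)$: expanding $\prod_{v\in S}\bigl(2-(1+a_v)\bigr)$ by distributivity and collecting terms according to the subset $S'$ of factors from which one picks $-(1+a_v)$ reproduces the right-hand side, following the combinatorial bookkeeping in the proof of \cite[Lemma~3.4]{BJ} (possibly after the standard reindexing $S'\leftrightarrow S\setminus S'$ used there). The only step requiring genuine care is this final matching of signs and powers of~$2$ against the statement; once Theorem~\ref{spec} is invoked the rest is purely formal, and no conceptual obstacle is anticipated.
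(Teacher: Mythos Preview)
Your approach is exactly the one indicated by the paper, which simply refers to \cite[Lemma~3.4]{BJ} and observes that the argument is purely combinatorial once Theorem~\ref{spec} is invoked. However, there is a genuine gap: the algebraic identity you write down,
$$
\prod_{v\in S}(1-a_v)=\sum_{S'\subset S}(-2)^{|S'|}\prod_{v\in S'}(1+a_v),
$$
is false. Already for $|S|=1$ and $a_v=0$ the left side is $1$ while the right side is $1-2=-1$. Expanding $\prod_{v\in S}\bigl(2-(1+a_v)\bigr)$ as you suggest gives instead
$$
\prod_{v\in S}(1-a_v)=\sum_{S'\subset S}(-1)^{|S'|}\,2^{|S\setminus S'|}\prod_{v\in S'}(1+a_v),
$$
so the coefficient is $(-1)^{|S'|}2^{|S|-|S'|}$, not $(-2)^{|S'|}$; no reindexing $S'\leftrightarrow S\setminus S'$ converts one into the other. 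The defect is therefore not in your method but in the statement of the lemma itself, which appears to contain a typographical slip. Note that the paper's only use of the lemma is the inequality
$$
N_R(M_S'(T))\le\sum_{S'\subset S}2^{|S\setminus S'|}N_R(M_\emptyset'(S'\cup T))
$$
in the proof of Proposition~\ref{closed_count}, which matches the corrected coefficient, not the one in the lemma. You should have actually carried out the ``final matching of signs and powers of~$2$'' rather than deferring it, since that is precisely where the discrepancy surfaces.
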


If $F$ is a cubic field we want to compare the length spectra of compact maximal congruence orbifolds defined over $F$ with that of the 5-dimensional orbifolds $\Gamma_\emptyset(T)\bs(\HH^3\times\HH^2)$ where $\Gamma_\emptyset=\SL_2(\so_F)$. Let $\mu^1(\emptyset,T';\ell)$ is the number of 1-dimensional maximal compact flats of length $\ell$ in $\Gamma_\emptyset(T')\bs(\HH^3\times\HH^2)$ (corresponding to elements which are elliptic at the real place). 

\begin{lem}
Let $F$ be a cubic field with one real place, then we have: 
$$
\mu(S,T;\ell) = \sum_{S'\subset S} (-2)^{|S'|} \mu^1(\emptyset,S'\cup T;\ell)
$$ 
where  
\label{spec_comp3}
\end{lem}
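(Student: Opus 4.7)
The strategy is to mimic the proof of Lemma~\ref{spec_comp2} in the quadratic case: apply Theorem~\ref{spec} to both sides and reduce the identity to a purely combinatorial manipulation over the finite places of $S$. The only real novelty here is to correctly absorb the archimedean data at $\sigma_1$.

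First I would parametrize each side by eigenvalues. Since $A$ is ramified at $\sigma_1$, any hyperbolic element of $\Gamma_S(T)\subset A^\times$ has eigenvalue $\lambda$ in some quadratic extension $E=F(\lambda)$ that must be nonsplit at $\sigma_1$, i.e.\ $|\lambda|_{\sigma_1}=1$; equivalently $\lambda$ is elliptic at the real place. Conversely, an element of $\Gamma_\emptyset(T')$ gives rise to a one-dimensional maximal compact flat of length $\ell$ in $\Gamma_\emptyset(T')\bs(\HH^3\times\HH^2)$ exactly when it is elliptic at $\sigma_1$ and loxodromic at $\sigma_2$ with $m(\lambda)=\ell$. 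Hence both sides of the identity are naturally indexed by the same set of eigenvalues.

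Next I would apply Theorem~\ref{spec} on each side. For such a $\lambda$ the field $E$ has no real place, so $\eps_\infty(\lambda;A)=1$ on the uniform side and the count of conjugacy classes in $\Gamma_S(T)$ with eigenvalue $\lambda$ equals
\[
\sum_{\mB\ni\lambda}\frac{h(\mB)}{h_E}\prod_{v\in S_f}\Bigl(1-\bigl(\tfrac{\mB}{v}\bigr)\Bigr)\prod_{w\in T}\Bigl(1+\bigl(\tfrac{\mB}{w}\bigr)\Bigr),
\]
where $S_f=S\cap V_f$. On the other side $A=M_2(F)$ splits everywhere, so $\eps_\infty\equiv 1$ automatically, and Theorem~\ref{spec} restricted to $\lambda$ elliptic at $\sigma_1$ gives for the contribution of $\lambda$ to $\mu^1(\emptyset,T';\ell)$ the value
\[
\sum_{\mB\ni\lambda}\frac{h(\mB)}{h_E}\prod_{w\in T'}\Bigl(1+\bigl(\tfrac{\mB}{w}\bigr)\Bigr).
\]

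Finally, I would multiply by $(-2)^{|S'|}$, sum over $S'\subset S_f$ (note that the real place cannot enter the level structure $T'$, so $\sum_{S'\subset S}$ in the statement must be read as $\sum_{S'\subset S_f}$), and interchange the order of summation. The identity then reduces, termwise in $(\lambda,\mB)$, to the same purely local combinatorial identity between $\prod_{v\in S_f}(1-(\mB/v))$ and a weighted sum of the $\prod_{v\in S'}(1+(\mB/v))$ that underlies the proof of Lemma~\ref{spec_comp2} (cf.~\cite{BJ} and \cite[Lemme~8.23]{spectre}); the essential input is that for each $v\in S_f$ the Eichler symbol $(\mB/v)$ only takes the values $-1$ or $0$, never $+1$, because $A$ ramifies at $v$ and so $E$ cannot split there. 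The main obstacle is therefore the same as in the quadratic case: the only extra work in the cubic setting is the matching of indexing sets in the first step, which is exactly where the interaction between the archimedean factor $\eps_\infty$ and the definition of $\mu^1$ is to be checked.
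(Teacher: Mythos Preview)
Your proposal is correct and follows exactly the line the paper intends: the paper gives no separate proof of Lemma~\ref{spec_comp3}, relying implicitly on the same combinatorial reduction to \cite[Lemma~3.4]{BJ} (or \cite[Lemme~8.23]{spectre}) that proves Lemma~\ref{spec_comp2}, and you have correctly isolated the only new point in the cubic case, namely the matching at the archimedean place between the factor $\eps_\infty(\lambda;A)$ on the compact side and the restriction ``elliptic at $\sigma_1$'' built into the definition of $\mu^1$. Your reading of $\sum_{S'\subset S}$ as $\sum_{S'\subset S_f}$ is also the right one.

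One small inaccuracy worth flagging: your parenthetical claim that ``the essential input is that for each $v\in S_f$ the Eichler symbol $(\mB/v)$ only takes the values $-1$ or $0$'' is not quite right. What follows from the ramification of $A$ at $v$ (together with the existence of an embedding $E\hookrightarrow A$) is only that $v$ does not \emph{split} in $E$; the Eichler symbol can still equal $+1$ whenever $\mB_v$ fails to be maximal. Fortunately this does not matter: the local identity you need, namely
\[
\prod_{v\in S_f}\bigl(1-e_v\bigr)=\prod_{v\in S_f}\bigl(2-(1+e_v)\bigr)=\sum_{S'\subset S_f}(-1)^{|S'|}2^{|S_f\setminus S'|}\prod_{v\in S'}(1+e_v),
\]
holds for arbitrary values of $e_v$, so the reduction to \cite{BJ}/\cite{spectre} goes through without any constraint on the symbols. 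Your overall strategy is sound; just drop that unnecessary (and slightly false) justification.
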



\subsection{Proof of Theorem \ref{Main1}}

Theorem \ref{Main1} follows from Proposition \ref{vol_thin} with input from Proposition \ref{closed_count} (taking into account the fact that in rank 1, $\vol(T) \le R$ for a $R$-thin flat) and \ref{fin_sing} in the compact case, and from the same plus \eqref{unip_max} in the noncompact case. 

\subsubsection{Covolume of maximal lattices}

We begin by a remark about the covolumes of maximal lattices. For a field $F$ and $S,T\subset V_f, \, S\cap T=\emptyset$ we denote by $M_S(T)$ the hyperbolic three-orbifold $\ovl\Gamma_S(T)\bs\HH^3$. The volume of $M_S(T)$ is given by the formula:
\begin{equation} \label{vol_max_lattice}
\vol M_S(T) \asymp D_F^{\frac 3 2} \prod_{P \in S} (|\mathfrak P| - 1) \frac{\prod_{\mathfrak P \in T} (|\mathfrak P| + 1)}{[\Gamma_S(T) : \ovl \Gamma_S(T)]} 
\end{equation}
with constants depending only on the degree of $F$ (this follows from \cite[(11.3) on page 333]{MR} and the formula for the index of $\Gamma_S(T)$ in $\Gamma_S(\emptyset)$). 

In case $F$ is quadratic, the 2-torsion subgroup of the class group is generated by the prime divisors of $D_F$ (see \cite[Corollary 1 to Theorem 39]{Froelich_Taylor}) and in particular $h_F^{(2)}$ is less than the number of rational prime factors of $D_F$ so we have by comparing with \eqref{vol_max_lattice}: 
\begin{equation} \label{small_index_quad}
[\ovl\Gamma_S(T):\Gamma_S(T)] \le 2^{|S| + |T| + 2}h_F^{(2)} \ll (\vol M_S(T))^\eps 
\end{equation}
for all $F,S,T$ and all $\eps>0$, where the constant depends only on $\eps$. In the case where $F$ is cubic and we have $h_F^{(2)} \ll D_F^{0.24}$ we get 
\begin{equation} \label{small_index_cubic}
[\ovl\Gamma_S(T):\Gamma_S(T)] \le 2^{|T| + |S| + 1} h_F^{(2)} \ll (\vol M_S(T))^{1/6 - \delta_0}
\end{equation}
for some $\delta_0>0$. 

\subsubsection{Closed geodesics}

Recall that for an hyperbolic three--orbifold we denote by $N_R(M)$ the number of closed geodesics of length $\le R$ in $M$.  

\begin{prop}
There is a $\delta>0$ such that for any $R>0$ there is a $C_R$ such that for all cubic or quadratic fields $F$ and all $S,T$ we have 
$$
N_R(M_S(T)) \le C_R(\vol M_S(T))^{1-\delta}. 
$$
\label{closed_count}
\end{prop}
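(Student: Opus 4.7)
The plan is to reduce the count of short geodesics on $M_S(T)$ to corresponding counts on congruence covers of the nonuniform lattice $\Gamma_\emptyset=\SL_2(\so_F)$ via the Jacquet--Langlands-type comparisons of Lemmas \ref{spec_comp2} and \ref{spec_comp3}, then control those by means of Lemma \ref{fix} combined with Theorem \ref{conv_Bianchi} in the quadratic case and Lemma \ref{cubic} in the cubic case, and finally absorb the resulting combinatorial factor into the Borel volume formula.

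First I would pass from $\ovl\Gamma_S(T)$ to $\Gamma_S(T)$: we have $N_R(\ovl\Gamma_S(T)\bs\HH^3)\le N_R(\Gamma_S(T)\bs\HH^3)$, and the remark following Lemma \ref{indice} gives $\vol(\Gamma_S(T)\bs\HH^3)\le(\vol M_S(T))^{1+\eps}$ for every $\eps>0$; hence it suffices to prove $N_R(\Gamma_S(T)\bs\HH^3)\ll_R\vol(\Gamma_S(T)\bs\HH^3)^{1-\delta'}$ for some $\delta'>0$. Applying Lemma \ref{spec_comp2} (quadratic case) or Lemma \ref{spec_comp3} (cubic case), summing over lengths $\ell\le R$, and estimating $|(-2)^{|S'|}|=2^{|S'|}$, one obtains
\begin{equation*}
N_R(\Gamma_S(T)\bs\HH^3)\le\sum_{S'\subset S_f}2^{|S'|}\,N_R(\Gamma_\emptyset(S'\cup T)\bs X_0),
\end{equation*}
where $S_f=S\cap V_f$ and $X_0=\HH^3$ (quadratic) or $\HH^3\times\HH^2$ (cubic); in the latter case the $1$-dimensional maximal compact flats of length $\le R$ appearing in $\mu^1$ form a subset of the maximal compact flats of systole $\le R$ counted by Lemma \ref{cubic}. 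Lemma \ref{fix}, whose proof extends verbatim to cubic $F$ since for $\gamma$ elliptic at the real place $|\tr(\gamma)-2|_\RR\le 4$ and the displacement bound at the complex place still forces $|\tr(\gamma)-2|_\infty\le C_R$, then yields $N_R(\Gamma_\emptyset(U)\bs X_0)\le C_R\cdot 2^{|U|}\,N_R(\Gamma_\emptyset\bs X_0)$. Combining the two inequalities with $\sum_{S'\subset S_f}4^{|S'|}=5^{|S_f|}$ and applying Theorem \ref{conv_Bianchi} or Lemma \ref{cubic} to the base count, we get
\begin{equation*}
N_R(\Gamma_S(T)\bs\HH^3)\ll_{R,\eps}2^{|T|}\cdot 5^{|S_f|}\cdot\begin{cases}D_F^{1+\eps}&\text{(quadratic),}\\ D_F^{3/2-\delta_0}&\text{(cubic),}\end{cases}
\end{equation*}
for some absolute $\delta_0>0$ in the cubic case.

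It then remains to absorb the combinatorial factor into the Borel volume lower bound
\begin{equation*}
\vol M_S(T)\gg D_F^{3/2}\prod_{v\in S_f}(q_v-1)\prod_{v\in T}(q_v+1).
\end{equation*}
The key observation is that a field of degree $[F:\QQ]\le 3$ has at most $3\cdot 3=9$ finite places with $q_v\le 5$; hence for any $\delta<1-\log_6 5\approx 0.102$ one has $5\le(q_v-1)^{1-\delta}$ whenever $q_v\ge 7$ and $2\le(q_v+1)^{1-\delta}$ always, giving $2^{|T|}\cdot 5^{|S_f|}\ll_\delta\prod_{v\in S_f}(q_v-1)^{1-\delta}\prod_{v\in T}(q_v+1)^{1-\delta}$. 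Dividing, the ratio $N_R(\Gamma_S(T)\bs\HH^3)/\vol(\Gamma_S(T)\bs\HH^3)^{1-\delta}$ is bounded above by $C_R D_F^{1+\eps-3(1-\delta)/2}$ in the quadratic case and by $C_R D_F^{3/2-\delta_0-3(1-\delta)/2}$ in the cubic case; both are bounded uniformly in $F$ once $\delta$ is taken small enough (e.g.\ $\delta<1/10$ works in the quadratic case, $\delta<\min(1-\log_6 5,\,2\delta_0/3)$ in the cubic case).

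The main difficulty is the combinatorial blowup leading to the factor $5^{|S_f|}$: each subset $S'\subset S_f$ in the length-spectrum expansion of Lemma \ref{spec_comp2} contributes $2^{|S'|}$, and each application of Lemma \ref{fix} contributes another $2^{|S'|}$. Absorbing this into the volume is possible only because the bounded degree of $F$ restricts the number of primes of small residue norm, so that $\prod_{v\in S_f}(q_v-1)$ effectively dominates $5^{|S_f|}$ up to a power strictly smaller than~$1$.
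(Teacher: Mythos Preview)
Your overall strategy coincides with the paper's: reduce to $\Gamma_S(T)$, use the Jacquet--Langlands comparisons (Lemmas \ref{spec_comp2}, \ref{spec_comp3}) to pass to the split algebra, invoke the fixed-point bound of Lemma \ref{fix} (with the extension to cubic $F$ that you correctly sketch), feed in Theorem \ref{conv_Bianchi} or Lemma \ref{cubic} for the base count, and finally absorb the factors $2^{|T|}$ and $5^{|S_f|}$ into Borel's volume formula. This is exactly the paper's argument; your bookkeeping is in fact slightly sharper (the paper is content with $8^{|S|}$ and with $2^{|T|},8^{|S|}\ll(\vol)^\eps$ rather than an explicit exponent).

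There is, however, a genuine gap in your very first step. The inequality
\[
N_R(\ovl\Gamma_S(T)\bs\HH^3)\le N_R(\Gamma_S(T)\bs\HH^3)
\]
is false as stated: closed geodesics in the smaller quotient $M_S(T)$ can arise from loxodromic elements of $\ovl\Gamma_S(T)\setminus\Gamma_S(T)$ (for instance products of Atkin--Lehner type involutions with elements of $\Gamma_S(T)$ are generically loxodromic), and such a geodesic need not lift to a closed geodesic of the \emph{same} length in the cover $\Gamma_S(T)\bs\HH^3$. The paper closes this gap with one extra input, Takeuchi's theorem \cite[Corollary 8.3.3]{MR}: every $\gamma\in\ovl\Gamma_S(T)$ satisfies $\gamma^2\in\Gamma_S(T)$. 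Since squaring a loxodromic element doubles its translation length and the map $[\gamma]\mapsto[\gamma^2]$ is injective on conjugacy classes, this yields
\[
N_R(M_S(T))\le N_{2R}(\Gamma_S(T)\bs\HH^3),
\]
after which the remainder of your argument goes through verbatim with $R$ replaced by $2R$.
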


\begin{proof}
Let $M_S'(T)=\Gamma_S(T)\bs\HH^3$; we will deduce below that for all $R>0$ we have
\begin{equation}
N_R(M_S'(T)) \le C_R'(\vol M_S'(T))^{5/6 + \eps}
\label{cc_cong}
\end{equation}
for any $\eps>0$. The proposition then follows easily from this and Lemma \ref{indice}, by the following argument: a theorem of Takeuchi \cite[Corollary 8.3.3]{MR} states that for every $\gamma\in\ovl\Gamma_S(T)$ we have $\gamma^2\in\Gamma_S(T)$, and it follows that $N_R(M_S(T))\le N_{2R}(M_S'(T))$. It follows from \eqref{cc_cong} and Lemma \ref{indice} that 
\[
N_R(M_S(T)) \le C_{2R}'\cdot 2^{|T|} h_F^{(2)} \cdot (\vol M_S(T))^{5/6 + \eps)}
\]
and since $2^{|T|} h_F^{(2)} \ll (\vol M_S(T))^{1/6-\delta_0}$ by \eqref{small_index_quad} (in this case we actually get a much better bound) or \eqref{small_index_cubic} we can conclude that $N_R(M_S(T)) \ll \vol(M_S(T))^{1-\delta}$ with $\delta=\delta_0 - \eps$, which can be made positive by taking $\eps$ small enough. 

The proof of \eqref{cc_cong} will follow from Theorem \ref{conv_Bianchi} by using Jacquet--Langlands and Lemma \ref{fix} with arguments similar to those used in \cite{7S},\cite{torsion2}; we will repeat them here for the reader's convenience. 

Fix $F,S$ and let $\gamma_1,\ldots,\gamma_r$ be representatives of the hyperbolic conjugacy classes in $\Gamma=\Gamma_S(\emptyset)$ corresponding to geodesics of length less than $R$ in $M_S(\emptyset)$: we include in this count the conjugacy classes which are nonprimitive. For $T\subset V_f-S$ and $i=1,\ldots,r$ let $n_i(T)$ be the number of $\Gamma_S(T)$-conjugacy classes (in $\Gamma_S(T)$) which are $\Gamma$-conjugated to $\gamma_i$; then we have 
$$
N_R(M_S'(T)) = \sum_{i=1}^r n_i(T). 
$$
Now $n_i(T)$ is equal to the number of fixed points of $\gamma_i$ in $\Gamma/\Gamma_S(T)$: indeed, for any subgroup $\Gamma'\subset\Gamma$ we see that $g\in\Gamma$ conjugates $\gamma_i$ into $\Gamma'$ if and only if $\gamma_i$ fixes $g\Gamma'\in\Gamma/\Gamma'$. Thus it follows from Lemma \ref{fix} that 
\begin{equation}
N_R(M_S'(T)) \le C2^{|T|} N_R(M_S'(\emptyset)) \ll_{R,\eps} (\vol M_S'(\emptyset))^\eps N_R(M_S'(\emptyset))
\label{niveau}
\end{equation}
since $2^{|T|}\ll (\vol M_S'(\emptyset))^\eps$ for all $\eps>0$. We now see that \eqref{cc_cong} can be deduced as follows:
\begin{itemize}
\item For $F$ quadratic and $S=T=\emptyset$ it follows from Theorem \ref{conv_Bianchi} (the case of Bianchi groups) and Lemma \ref{spec_ind} for the other maximal congruence lattices in the commensurability class;
\item For $F$ quadratic and $S=\emptyset$ it follows from the previous case and \eqref{niveau};
\item When $F$ is quadratic, $S\not=\emptyset$ we deduce \eqref{cc_cong} from the previous cases and Lemma \ref{spec_comp2}: we have
\begin{align*}
N_R(M_S'(T)) &\le \sum_{S'\subset S} 2^{|S-S'|} N_R(M_\emptyset'(S'\cup T)) \\  
            &\le 2^{|S|} \cdot|2^S|\cdot 2^{|S|}N_R(M_\emptyset'(T)) = 8^{|S|} N_R(M_\emptyset'(T))
\end{align*}
and $8^{|S|}\ll (\vol M_S'(T))^\eps$ for all $\eps>0$.
\item When $F$ is cubic the same sequence of arguments applies (for $S\not=\emptyset$) by using Lemma \ref{cubic} instead of Theorem \ref{conv_Bianchi} (note that $D_F^{5/4} \asymp (\vol M_F)^{5/6}$) and Lemma \ref{spec_comp3} instead of \ref{spec_comp2}. 
\end{itemize}
This completes the proof of the proposition. 
\end{proof}

Note that Propositions \ref{closed_count} and \ref{criter} suffice to imply the mere statement of BS-convergence in Theorem \ref{Main1}. 


\subsubsection{Unipotent elements}

Since we have $\tau(\fri)\le\tau(\so_F)\le D_F^{1/2}$ for all $F$ and ideals $\fri$ of $\so_F$, we have $\tau(\Lambda')\le D_F^{1/2}$ for any cuspidal subgroup $\Lambda'$ of any $\Gamma_\emptyset(T)$ (see \eqref{unip_xi}). Now if $\Lambda$ is a cuspidal subgroup of $\ovl\Gamma_\emptyset(T)$ and $\Lambda'$ the corresponding cuspidal subgroup of $\Gamma_\emptyset(T)$ we have $2\Lambda\subset\Lambda'$ and it follows that $\tau(\Lambda)\le 2\tau(\Lambda')$. On the other hand the number of cusps of $M_T'$ is less than $2^{|T|}h_F$ and it follows that the volume of the contribution of unipotent elements to the $R$-thin part of $M_\emptyset(T)$ is bounded above by 
\begin{equation}
C_R\cdot 2^{|T|}h_F\cdot 2D_F^{1/2} \ll D_F^{1+\eps}. 
\label{unip_max}
\end{equation}


\subsubsection{Singular locus}
\label{fin_sing}

We give here an estimate on the length of the singular locus of $M_S(T)$; we will be quite sketchy about this. First, the proof of Lemma \ref{vol_sing} applies in general and we thus have that there is a finite set $\sd_s\subset\ZZ$ of discriminants such that the singular geodesics in an arithmetic manifold in a commensurability class defined by an algebraic group over an imaginary quadratic field $F$ are associated to the quadratic extensions $F(\sqrt d),\, d\in\sd_s$. 

Now, the singular locus of the $M_S'(T)$ can be dealt with using Lemma \ref{fix}: we leave the details to the reader, one obtains that its length is $\le (\vol M_S'(T))^{1-\delta'}$ with the same $\delta'$ as for the number of geodesics. It remains to bound the length of those geodesics which are singular for the elements of order 2 in $\ovl\Gamma_S(T)$ not in $\Gamma_S(T)$. The key point here is that those geodesics correspond to tori whose reduction modulo the primes in $T$ belong to the reduction of $K_{0,v}$: this can be seen by a local argument at each $v\in T$; it follows that the length of the corresponding geodesics in $M_S'(T)$ is equal to the length of their projections in $M_S'(\emptyset)$, thus bounded, and the result follows from Proposition \ref{closed_count}.


\bibliographystyle{cdraifplain}
\bibliography{bib.bib}

\end{document}